\documentclass[pdflatex,sn-mathphys-num]{sn-jnl}

\let\vec\relax
\DeclareMathAccent{\vec}{\mathord}{letters}{"7E}

\usepackage{amsmath}
\usepackage{comment}
\usepackage{amsfonts}
\usepackage{amssymb}
\usepackage{mathtools}
\usepackage{mathrsfs,bbm}
\usepackage{xcolor}
\usepackage{lmodern}
\usepackage{comment}

\usepackage{graphicx}
\usepackage{epstopdf}
\usepackage{float}
\usepackage{tikz}			% Tikz

\usepackage{bm}
\usepackage{enumitem}
\usepackage[ruled,vlined,linesnumbered]{algorithm2e}
  
\usepackage{accents}

\date{ \phantom{b} \vspace{45mm}\phantom{e}}
\newcommand*{\dt}[1]{ \accentset{\mbox{\large\bfseries .}}{#1}}
\DeclareMathOperator{\bigtimes}{{\hbox{\large\sf X}}}

\def\R{{\mathbb R}}
\def\C{{\mathbb C}}
\def\I{\bf{I}}
\def\eps{\varepsilon}

\def\wh{\widehat}
\def\wt{\widetilde}

\def\M{\mathcal{M}}

\newcommand\bfr{{\mathbf r}}

\newcommand\bfI{{\mathbf I}}
\newcommand\bfA{{\mathbf A}}
\newcommand\bfB{{\mathbf B}}

\newcommand\bfF{{\mathbf F}}

\newcommand\bfK{{\mathbf K}}
\newcommand\bfL{{\mathbf L}}
\newcommand\bfM{{\mathbf M}}

\newcommand\bfP{{\mathbf P}}
\newcommand\bfQ{{\mathbf Q}}
\newcommand\bfR{{\mathbf R}}
\newcommand\bfS{{\mathbf S}}
\newcommand\bfU{{\mathbf U}}
\newcommand\bfV{{\mathbf V}}
\newcommand\bfW{{\mathbf W}}

\newcommand\bfY{{\mathbf Y}}
\newcommand\bfZ{{\mathbf Z}}

\newcommand\bfSigma{{\mathbf \Sigma}}

\newcommand\mat{\textbf{Mat}}
\newcommand\ten{\mathrm{Ten}}

\theoremstyle{plain}
\newtheorem{theorem}{Theorem}[section]
\newtheorem{lemma}[theorem]{Lemma}

\theoremstyle{definition}
\newtheorem{definition}[theorem]{Definition}

\theoremstyle{remark}
\newtheorem{remark}[theorem]{Remark}

\def\eps{\varepsilon}

\def\phi{\varphi}

\makeatletter
\newcommand{\ostar}{\mathbin{\mathpalette\make@circled\star}}
\newcommand{\make@circled}[2]{%
  \ooalign{$\m@th#1\smallbigcirc{#1}$\cr\hidewidth$\m@th#1#2$\hidewidth\cr}%
}
\newcommand{\smallbigcirc}[1]{%
  \vcenter{\hbox{\scalebox{0.77778}{$\m@th#1\bigcirc$}}}%
}
\makeatother

\begin{document}

\title{Robust second-order time integration of tree tensor networks}

\author[1]{\fnm{Jonas Kusch} \sur{}}\email{jonas.kusch@uni-paderborn.de}
%\equalcont{These authors contributed equally to this work.}

\author[2]{\fnm{Dominik Sulz} \sur{}}\email{dominik.sulz@tum.de}
%\equalcont{These authors contributed equally to this work.}

\affil[1]{\orgdiv{Numerische Mathematik}, \orgname{Universität Paderborn}, \orgaddress{\street{Warburger Str. 100}, \city{Paderborn}, \postcode{33098}, \state{Nordrhein-Westfalen}, \country{Germany}}}

\affil[2]{\orgdiv{Department of Mathematics}, \orgname{Technical University of Munich}, \orgaddress{\street{Boltzmannstraße 3}, \city{Garching bei München}, \postcode{85748}, \state{Bayern}, \country{Germany}}}

\abstract{We propose and analyze two second-order basis-update \& Galerkin (BUG) time integration methods for dynamical low-rank approximation on Tucker tensors and tree tensor networks. Both are built from a sequential pre-computation sweep, after which the differential equations for all basis matrices and connecting tensors are evolved by a Galerkin method. Further, the proposed methods are rank-adaptive by construction. The first, the second-order parallel BUG integrator, solves all these differential equations fully in parallel, which is favorable on parallel architectures, followed by a sequential augmentation and truncation step. The second, the second-order augmented BUG integrator, gives up full parallelism but conserves norm and energy for Schrödinger equations and dissipates energy for gradient flows up to the truncation tolerance. For both integrators, we prove a second-order error bound that is robust with respect to small singular values of the matricization of connecting tensors. Numerical experiments for radiative transfer and quantum spin systems validate the theoretical findings.}
    
\keywords{dynamical low-rank approximation, tensor differential equations}
	% \PACS{PACS code1 \and PACS code2 \and more}
\pacs[MSC Classification]{65L05, 65L20, 65L70, 15A69}

\maketitle

%\ds{Was ich mir nicht ganz erklären kann ist, dass der 2nd order parallele BUG ähnlich gut Norm und Energie erhält wie der 2nd order augmented BUG... Hast du ne Idee warum?}
%\jknote{Beim Matrix Integrator war das auch so und man konnte zeigen, dass der Fehler mit einer sehr hohen Ordnung bzgl. $h$ gegen $0$ geht. Wahrscheinlich sehen wir einen ähnlichen Effekt hier.}

% \ds{Comments Dominik:
% \begin{itemize}
%     \item This is already the template of Numerische Mathematik
%     \item I checked Lemma 3.6 again. I think it is correct, but I do not fully get one identity (however, I guess it is trivial). The rest should be fine. 
%     \item The fact that we use $\textbf{K}_i(t_0) = \wh\bfU_i^0\wh\bfU_i^{0,*}\bfU_i^0 \bfS_i^0 \bfQ_i^{\top}\overline{\widehat\bfQ_i}$ as the initial data doesn't hurt the proofs. Even better, now they are more correct :)
%     \item So far I could not make a nice error plot with the spin systems. I am still working on this. 
%     \item The rest of the paper looks quite good I would say.
%     \item I am on holiday from tomorrow on and will be back to the office on 17th of august.
%     \item Der second-order augmented BUG sollte Norm und Energie erhaltend sein, der parallele nicht. Der Beweis für Norm und Energieerhaltung sollte der gleiche sein wie im augmented BUG paper.
%     \item Auch die Eigenschaft, garantierten Energieverlust für Gradientensysteme zu haben sollte erfüllt sein.
% \end{itemize}}

\section{Introduction}\label{sec:intro}
In this work, we derive and present numerical methods of second-order in time to approximate the solution of tensor evolution equations of the form
\begin{align}
    \dot A(t) = F(t,A(t)), \quad A(t_0) = A_0 \in \C^{n_1 \times \dots \times n_d}. \label{eq:evol_eq_intro}
\end{align}
Direct numerical treatment of this equation is infeasible for large dimensions $d$, which is often referred to as the curse of dimensionality. An approach that has proven highly effective in overcoming this issue is dynamical low-rank approximation (DLRA) \cite{KochLubich07,KochLubich10}, which approximates and evolves the solution in a low-rank Tucker \cite{tucker1966} or tree tensor network \cite{WaT03, CeLW20} format. Tree tensor networks are a data-sparse hierarchical tensor decomposition. The time evolution of their low-rank factors is determined by the Dirac-Frenkel time-dependent variational principle \cite{lubich2008quantum}. There, the right-hand side of \eqref{eq:evol_eq_intro} is projected orthogonally onto the tangent space of the manifold $\mathcal{M}$ of tree tensor networks of a fixed tree rank. Equivalently, this can be interpreted as solving the evolution equation \eqref{eq:evol_eq_intro} projected onto the manifold $\mathcal{M}$. However, a main drawback is the ill-conditioning of the resulting low-rank time evolution equations due to the presence of small singular values of matricizations of connecting tensors, corresponding to regions of high curvature in $\mathcal{M}$. 

In recent years, many novel integrators have been developed to move only on flat subspaces of the low-rank manifold, thereby removing the additional stiffness that stems from high curvature regions in the manifold and leading to time integration schemes which are robust even in the presence of small singular values. The first integrator was the projector-splitting integrator \cite{LubichOseledets,KieriLubichWalach}, with extensions to Tucker tensors in \cite{LubichVandWalach} and tree tensor networks in \cite{CeLW20}. These integrators split the dynamics into several stable substeps, simultaneously evolving the basis and coefficients of the low-rank factorization. Later, the class of basis-update \& Galerkin (BUG) integrators was derived. BUG integrators offer improved parallelism and share the benefit of evolving the solution only forward in time, which becomes a crucial property for dissipative problems and even stabilized discretizations for transport processes \cite{kusch2021stability}. While the commonly used augmented BUG integrator \cite{CeKL22,CLS23} enables parallel updates of bases, followed by sequential updates of connecting tensors, parallel BUG integrators \cite{CeKL23,ceruti2024parallel} enable the evolution of \emph{all} low-rank factors simultaneously. This inherent parallelism offers the potential to significantly accelerate the time evolution of larger tensor approximations, especially when expensive time integration schemes are required.

All integrators mentioned so far are of first order. For matrices, a robust second-order BUG based on the midpoint rule \cite{ceruti2024robust} (and another variant of higher-order extension in \cite{nobile2026high}) and a second-order parallel matrix integrator \cite{kusch2024second} have been proposed. The midpoint BUG matrix integrator \cite{ceruti2024robust}, and its generalization to matrix product states (MPS) in \cite{kapás2026}, attains second order by explicitly approximating the solution at the midpoint as a predictor, which requires three sequential substeps. The second-order parallel BUG from  \cite{kusch2024second} instead encodes the half-step information in the bases through a pre-augmentation step, and evolves all quantities at rank $2r$.
An extension to Tucker tensors and tree tensor networks of second-order, however, has been missing. This work closes this gap and derives two provably robust second-order BUG integrators for tensor-valued problems. For this, we combine the findings of first-order parallel tensor integrators from \cite{ceruti2024parallel}, the first-order augmented BUG \cite{CeKL22,CLS23}, with the derivation of a second-order parallel matrix integrator \cite{kusch2024second}. The first derived integrator, the second-order parallel BUG integrator, solves all differential equations fully in parallel while only requiring the evolution of objects at twice their original rank to achieve robust second-order accuracy and an improved dependence on normal components. The second derived integrator, the second-order augmented BUG integrator, gives up full parallelism but conserves norm and energy for Schrödinger equations and dissipates energy for gradient flows up to the truncation tolerance. Further, both integrators are rank-adaptive. The core novelty of both integrators is a carefully designed pre-augmentation that ensures the approximation of half-step information in every differential equation solved, while not requiring prohibitive memory and computational costs.

This paper is structured as follows. After the introduction in Section~\ref{sec:intro}, the second-order parallel matrix integrator is reviewed in Section~\ref{sec:recapParallel}. Section~\ref{sec:parallelTucker} presents the extension of this integrator to Tucker tensors, together with the derivation of a robust error bound. Section~\ref{sec:2nd_order_augmented_BUG} derives the second-order augmented BUG integrator for Tucker tensors together with a robust error bound. Then, Section~\ref{sec:ttn} extends both derivations to tree tensor networks. The efficiency of the presented integrators is demonstrated by numerical experiments for radiation transport and quantum spin systems in Section~\ref{sec:num_ex}.

\section{Recap: The second-order parallel basis-update \& Galerkin integrator}\label{sec:recapParallel}
We start by reviewing the second-order parallel BUG integrator of \cite{kusch2024second} for a matrix-valued problem of the form 
\begin{align*}
\dot{\bfA}(t) = \bfF(\bfA(t)), \quad \bfA(t=0) = \bfA_0\, ,
\end{align*}
where $\bfA(t)\in\mathbb{C}^{m \times n}$ and $\bfF : \mathbb{C}^{m\times n} \rightarrow \mathbb{C}^{m\times n}$. In the presentation of the integrator and throughout this manuscript, we denote tensors in italic capitals, matrices in boldface capitals and the concatenation of two matrices $\bfA\in\mathbb{C}^{m\times r_A}$ and $\bfB\in\mathbb{C}^{m\times r_B}$ as $[\bfA, \bfB] \in\mathbb{C}^{m\times (r_A + r_B)}$. Further, for a matrix $\bfM$, we denote by $\bfM^*$ the adjoint, by $\bfM^\top$ the transpose, and by $\overline{\bfM}$ the complex conjugation, i.e., $\bfM^* = \overline{\bfM}^\top$. 

Given the factored rank-$r$ numerical solution $\bfY^0 = \bfU_0 \bfS_0 \bfV_0^{*}$ at time $t_0$, one step of the second-order parallel BUG integrator from $t_0$ to $t_1 = t_0 + h$ reads:
	\begin{enumerate}
        \item Construct augmented basis matrices $\wh\bfU_0 = \text{orth}([\bfU_0, \bfF(t_0,\bfY^0)\bfV_0])$ and $\wh\bfV_0 = \text{orth}([\bfV_0, \bfF(t_0,\bfY^0)^{*}\bfU_0])$.\label{step:init_augment}
		\item 
		Evolve the rank $2r$ matrix differential equations for $\bfK$, $\bfL$, $\bar\bfS$ to determine $\widehat{\mathbf{U}}\in \mathbb{R}^{m\times 4r}$ and $\widehat{\mathbf{V}}\in \mathbb{R}^{n\times 4r}$ as well as the coefficient matrix $\bar{\mathbf{S}}(t_1) \in\mathbb{R}^{2r \times 2r}$ (in parallel):
		%Update and extend $ \bfU_0 \rightarrow \wh \bfU$ and $ \bfV_0 \rightarrow \wh\bfV$ in parallel:
		\\[2mm]
		\textbf{K-step}:
		From $t=t_0$ to $t_1$ integrate the $m \times 2r$ matrix differential equation 
		\begin{equation}\label{K-step_matrix} 
		\dot{\textbf{K}}(t) = \bfF(t, \textbf{K}(t) \wh\bfV_0^*) \wh\bfV_0, \qquad \textbf{K}(t_0) = \bfU_0 \bfS_0 \bfV_0^{*}\widehat{\bfV}_0.
		\end{equation}
		Construct $\wh \bfU = [\wh\bfU_0, \widetilde \bfU_1] \in \C^{m\times 4r}$ as an orthonormal basis of the range 
		of the $m\times 4r$ matrix $[\wh\bfU_0, \textbf{K}(t_1)]$ (e.g.~by QR decomposition), where $ \widetilde \bfU_1 \in \C^{m\times 2r}$ 
		is filled with zero columns if $[\bfU_0, \textbf{K}(t_1)]$ has rank less than $4r$.\\
		Compute the matrix $\mathbf{\widetilde S}_1^K = \widetilde\bfU_1^{*}\bfK(t_1)\in\C^{2r\times 2r}$.
		\\[2mm]
		\textbf{L-step}: 
		From $t=t_0$ to $t_1$ integrate the $n \times 2r$ matrix differential equation
		\begin{equation}\label{L-step_matrix} 
		\dot{\textbf{L}}(t) =\bfF(t, \wh\bfU_0 \textbf{L}(t)^*)^*  \wh\bfU_0, \qquad \bfL(t_0) = \bfV_0 \bfS_0^{*} \bfU_0^{*}\widehat{\bfU}_0. 
		\end{equation}
		Construct $\wh \bfV = [\wh\bfV_0, \widetilde \bfV_1] \in \C^{n\times 4r}$ as an orthonormal basis of the range 
		of the $n\times 4r$ matrix $[\wh\bfV_0, \textbf{L}(t_1)]$ (e.g.~by QR decomposition), where $ \widetilde \bfV_1 \in \C^{n\times 2r}$ 
		is filled with zero columns if $[\wh\bfV_0, \textbf{L}(t_1)]$ has rank less than $4r$.\\
		Compute the matrix $\mathbf{\widetilde S}_1^L = \bfL(t_1)^{*}\widetilde\bfV_1\in\C^{2r\times 2r}$.
		\\[2mm]
		\textbf{S-step}:
		From $t=t_0$ to $t_1$ integrate the $2r \times 2r$ matrix differential equation
		\begin{equation}\label{S-step} 
		\dot{\bar\bfS}(t) =  \wh\bfU_0^* \bfF(t, \wh\bfU_0 \bar\bfS(t) \wh\bfV_0^*) \wh\bfV_0, 
		\qquad \bar\bfS(t_0) = \widehat{\bfU}_0^{*}\bfU_0 \bfS_0 \bfV_0^{*}\widehat{\bfV}_0.
		 %\begin{pmatrix} {\bfS}_0 & 0  \\ 0 & 0 \end{pmatrix}.
		\end{equation}
		\item \textbf{Augment}:
		Construct the augmented coefficient matrix $\widehat \bfS_1 \in\C^{4r\times 4r}$ as
		\begin{align} \label{wh-S1}
            \widehat \bfS_1 = 
            \begin{pmatrix} 
                \bar \bfS(t_1) & \widetilde \bfS_1^L \\
                \widetilde \bfS_1^K & \mathbf{0}
            \end{pmatrix}.
        \end{align}
        \item \textbf{Truncate} according to a tolerance parameter $\vartheta$.
	\end{enumerate} 
	
Similar to \cite{ceruti2024robust}, this integrator is of second order, allows for rank-adaptivity while evolving all differential equations forward in time. The two main advantages of the parallel integrator are
\begin{itemize}
\item the requirement to solve only rank $2r$ evolution equations, compared to sequential $4r$ coefficient updates needed in the midpoint BUG integrator \cite{ceruti2024robust} to achieve an improved dependence on normal components of the right-hand-side,
\item the ability to evolve all differential equations in parallel, similar to \cite{CeKL23}\,.
\end{itemize}
Moreover, it can be shown that the parallel integrator shares the robust error bound of the BUG integrators~\cite{CeL20,CeKL22,CeKL23}, and the projector--splitting integrator~\cite{LubichOseledets,KieriLubichWalach}: We assume $\bfF$ to be Lipschitz continuous and bounded. Furthermore, normal components in the flux $\bfF$ at rank $r$ solutions are $\varepsilon_r$ small. That is, for a rank $r$ factorization $\bfZ = \bfU\bfSigma\bfV^{*}$ we have $\bfF(\bfZ) = \bfM(\bfZ) + \bfR(\bfZ)$, where $\bfM(\bfZ) := \bfP(\bfZ)\bfF(\bfZ) = \bfU\bfU^{*}\bfF(\bfZ)(\bfI-\bfV\bfV^{*})+\bfF(\bfZ)\bfV\bfV^{*}$, and we assume that $\Vert \bfR(\bfZ) \Vert\leq \varepsilon_r$. Then, according to~\cite[Theorem~4.1]{kusch2024second}, the global error at time $t_M = M\cdot h$ fulfills for the Frobenius norm $\Vert \cdot \Vert$
\begin{align*}
\Vert \bfY^M - \bfA(t_M)\Vert \leq \Vert \bfY^0 - \bfA(t_0)\Vert + c_1 h^2 + c_2 h\varepsilon_{r_{\mathrm{avg}}}+ c_3 \varepsilon_{2r_{\mathrm{avg}}} + c_4 M\vartheta,
\end{align*}
where constants $c_i$ are independent of small singular values in the numerical and exact solutions and $\varepsilon_{r_{\mathrm{avg}}}$ denotes the averaged sum over normal components at ranks $r_n$, where $r_n$ is the rank of the numerical solution at time $t_n$. 

% Moreover, the parallel integrator allows for an approximation of the normal component of $\bfF(t,\bfY)$ with respect to the tangent space of the rank-$r$ matrix manifold  at $(t,\bfY^0)$, given by
% \begin{equation}\label{eps}
% \| \bfR(\bfY^0) \| \approx \| \bfP^\perp(\bfY^0) [\wh \bfU\wh \bfU^* \bfF(\bfY^0) \wh\bfV\wh\bfV^*] \| = \| \widetilde \bfU_1^* \bfF(\bfY^0) \widetilde \bfV_1\|:=\eta.
% \end{equation}
% where $\bfP^\perp(\bfY)=\bfI-\bfP(\bfY)$. That is, the computation of $\eta$ gives an estimate of $\varepsilon$, which can be used to recompute a time update with a larger basis when $\eta$ lies above a certain threshold.

\section{Parallel Tucker tensor integrator}\label{sec:parallelTucker}
In the following, we present a parallel BUG integrator for problems of the form 
\begin{align*}
\dot{A}(t) = F(t,A(t)), \quad Y(t=0) = Y^0\, ,
\end{align*}
where $A(t)\in\mathbb{C}^{n_1 \times \cdots \times n_d}$. We recall that we write tensors in italic capitals and matrices in boldface capitals.

A crucial operation in the derivation of the integrators is $\mat_i(A) \in \C^{n_i \times n_{\neg i} }$, the $i$th matricization of a tensor $A\in\mathbb{C}^{n_1 \times \cdots \times n_d} $, where $n_{\neg i} = \prod_{j \neq i}^d n_j$. It is a reshape of the tensor $A$ where the $k$th row aligns all
entries of $A$ that have $k$ as the $i$th subscript. The inverse operation is called tensorization $\ten_i$ such that $A = \ten_i(\mat_i (A))$. 

We will denote matricizations of $F$, $A$, $Y$ in the $i$th mode by $\bfF_i(t, \bfZ) := \mat_i(F(t, \ten_i(\bfZ)))$, $\bfA_i = \mat_i(A)$, and $\bfY_i = \mat_i(Y)$. Moreover, we define $r_{\lnot i} := \prod_{j\neq i}r_j$. Finally, we want to recall the useful unfolding formula for Tucker tensors, cf. \cite{KoldaBader:TensorDec}
\begin{align}
    \mat_i \left( C \bigtimes_{j=1}^d \bfU_j \right) = \bfU_i \mat_i (C) \bigotimes_{j \neq i} \bfU_j^\top. \label{eq:unfolding_formula}
\end{align}
Note that \eqref{eq:unfolding_formula} also holds true for complex-valued tensors. 
\subsection{Formulation of the algorithm}\label{subsec:2ndorder_parallel_Tucker_formulation}
Given the rank $\bfr = (r_1, \cdots, r_d)$ numerical solution in Tucker format $Y^0 = C^0 \bigtimes_{i=1}^d \bfU_i^0$ at time $t_0$, one step of the parallel Tucker integrator from $t_0$ to $t_1$ reads:
	\begin{enumerate}
        \item \textbf{Pre-Augment}: Construct augmented basis matrices by defining $\textbf{Mat}_i(C^0)^{\top} = \wt \bfQ_i \wt \bfS_i^\top$ as well as $\bfV_i^{0,*} = \wt \bfQ_i^{\top} \bigotimes_{j\neq i}^d \bfU_j^{0,*} \in\C^{r_i \times n_{\lnot i}}$ and setting %\ds{We should use a different notation for the following $Q_i$, since we used it twice for different matrices. Maybe $\widetilde Q_i$}
        \begin{align*}
            \wh\bfU_i^0 = \text{orth}([\bfU_i^0, \bfF_i(t_0,\bfY_i^0)\bfV_i^{0}])\,.
        \end{align*}
        Compute the projected core $\widehat C_F^0 = F(t_0, Y^0)\bigtimes_{j=1}^d \wh \bfU_j^{0,*}$.
		\item \textbf{Parallel time integration}:  Construct augmented basis matrices $\wh \bfU_i^1 \in \C^{n_i\times 4r_i}$ and update core tensors $\widetilde C_i^1$, $\bar C$ from $t_0$ to $t_1$ (in parallel):
		\\[2mm]
		\textbf{K$_i$-step} (for $i = 1,\cdots, d$): Compute $\wh\bfV_i^{0}$ by factorizing
		\begin{align*}
            \bfQ_i \bfS_i^{0,\top} =\,& \textbf{Mat}_i(C^0 \bigtimes_{j\neq i}^d \wh \bfU_j^{0,*}  \bfU_j^0)^{\top} \in\mathbb{C}^{2r_{\lnot i}\times r_i} \\
		      \wh\bfQ_i \wh\bfS_i^\top =\,& \left[\bfQ_i, \textbf{Mat}_i(\widehat C_F^0)^{\top} (\bfU_i^{0,*}\wh\bfU_i^0)^{{\top}} \right] \in\mathbb{C}^{2r_{\lnot i}\times 2r_i} \\
              =\,& \left[\bfQ_i, \textbf{Mat}_i \left(\widehat C_F^0 \times_i \bfU_i^{0,*}\wh\bfU_i^0\right)^{{\top}} \right]
        \end{align*}
		with orthonormal $\bfQ_i$ and $\wh\bfQ_i$ by, e.g., a QR decomposition, and set $\wh\bfV_i^{0,*} = \wh\bfQ_i^{\top} \bigotimes_{j\neq i}^d \wh\bfU_j^{0,\top} \in\C^{2r_i \times n_{\lnot i}}$. Integrate the $n_i \times 2r_i$ matrix differential equations
		\begin{equation}\label{K-step} 
		\dot{\textbf{K}}_i(t) = \bfF_i(t, \textbf{K}_i(t) \wh\bfV_i^{0,*}) \wh\bfV_i^0, \qquad \textbf{K}_i(t_0) = \wh\bfU_i^0\wh\bfU_i^{0,*}\bfU_i^0 \bfS_i^0 \bfQ_i^{\top}\overline{\widehat\bfQ_i}\,.
		\end{equation}
		Construct $\wh \bfU_i^1 = [\wh \bfU_i^0, \widetilde \bfU_i] \in \C^{n_i\times 4r_i}$ as an orthonormal basis of the range 
		of the $n_i\times 4r_i$ matrix $[\wh\bfU_i^0, \textbf{K}_i(t_1)]$. %, where $ \widetilde \bfU_i^1 \in \C^{n_i\times r_i}$ 		is filled with zero columns if $(\bfU_i^0, \textbf{K}(t_1))$ has rank less than $2r_i$. \textcolor{brown}{D: Do we really fill up with zero columns? I thought we don't do this anymore.} \\
		Compute the tensor $\widetilde C_i^1 = \text{Ten}_i(\widetilde\bfU_i^{*}\bfK_i(t_1)\wh\bfQ_i^{\top}) \in\C^{2r_1\times\cdots\times 2r_d}$. %\textcolor{blue}{[(G) Rationale to be provided]}
        %\ds{To be checked: $\bfQ_i^\top \wh \bfQ_i$ or $\bfQ_i^* \wh \bfQ_i$ in the complex case?} \ds{Answer: Transpose is correct but there must be a $\overline{\wh \bfQ_i}$, see TTN section. -> Does this affect the proof?} 
		\\[2mm]
		\textbf{C-step}:
		From $t=t_0$ to $t_1$ integrate the $2r_1 \times \cdots \times 2r_d$ tensor differential equation
		\begin{equation}\label{C-step} 
		\dot{\bar C}(t) = F(t, \bar C(t) \bigtimes_{\ell=1}^d\wh\bfU_{\ell}^{0}) \bigtimes_{j=1}^d\wh\bfU_j^{0, *}, 
		\qquad \bar C(t_0) = C^0 \bigtimes_{j=1}^d \wh\bfU_j^{0,*}\bfU_j^{0}.
		\end{equation}
		\item \textbf{Augment}:
		Construct the augmented core tensor $\widehat C^1 \in\C^{4r_1\times\cdots\times 4r_d}$ such that
		\begin{align}
    		&\widehat C^1 \bigtimes_{j=1}^d \wh \bfU_j^{0,*}\wh \bfU_j^1 = \bar{C}(t_1)\,,\\
    		&\widehat C^1 \bigtimes_{j\neq i} \wh \bfU_j^{0,*}\wh \bfU_j^1 \times_i \widetilde \bfU_i^{*}\wh \bfU_i^1 = \widetilde C_i^1 \quad \text{for } i = 1,\cdots,d\, ,
		\end{align}
		and zero-valued entries elsewhere.
        \item \textbf{Truncate} according to a tolerance parameter $\vartheta$.
	\end{enumerate} 
    The algorithm then proceeds with a truncated low-rank factorization in Tucker format $Y^1 = C^1 \bigtimes_{i=1}^d \bfU_i^1$ at time $t_1$ and repeats the outlined steps until a final time $t_{\mathrm{end}}$ is reached. It must be noted that the method parallelizes the time evolution, which is commonly the most expensive step of the integrator. However, pre-augmentation, augmentation, and truncation steps must be computed sequentially.

    \begin{remark}
        The construction of $\wh \bfQ_i$ and $\bfQ_i$ can be interpreted as follows if we use the identities $ \bfU_i^{0,*}\wh\bfU_i^0 = [\bfI,0]$ and $\wh \bfU_i^{0}\wh \bfU_i^{0,*} \bfU_i^{0} = \bfU_i^0$:
    \begin{align*}
              \wh \bfQ_i =\,& \left[\bfQ_i, \textbf{Mat}_i \left(\widehat C_F^0 \times_i \bfU_i^{0,*}\wh\bfU_i^0\right)^{{\top}} \right] \bfR_i \\
              =\,& [\textbf{Mat}_i(C^0 \bigtimes_{j\neq i}^d \wh \bfU_j^{0,*}  \bfU_j^0)^{\top},\textbf{Mat}_i \left(\widehat C_F^0 \times_i \bfU_i^{0,*}\wh\bfU_i^0\right)^{{\top}}] \ \bfR_i \\
              =\,& [\textbf{Mat}_i(Y^0 \bigtimes_{j\neq i}^d \wh \bfU_j^{0,*} \times_i \bfU_i^{0,*})^\top, \textbf{Mat}_i (F(Y^0) \bigtimes_{j\neq i}^d \wh \bfU_j^{0,*} \times_i \bfU_i^{0,*})^\top ] \ \bfR_i \\
              =\,& \left( \bigotimes_{j \neq i} \wh\bfU_j^{0,*} \right) [\mat_i(Y^0)^\top , \mat_i(F(Y^0))^\top] \begin{pmatrix}
            \bfU_i^{0,*}\\\bfU_i^{0,*} \end{pmatrix} ^\top \bfR_i.
    \end{align*}
    Hence, $\wh \bfQ_i$ can be interpreted as spanning the mode $i$ matricization of $Y^0$ and $F(Y^0)$, which is tested against the augmented basis $\wh \bfU_j^1$ in all modes except $i$ and tested with the basis $\bfU_i^0$ in the $i$th mode. Further, $\bfQ_i$ can be interpreted as the $i$th matricization of $Y^0$, which is tested against the same bases. 
    \end{remark}

    \begin{remark}
        In the algorithm, one must guarantee that the first $r_i$ columns of $\wh \bfU_i^0$ equal $\bfU_i^0$. When using a QR factorization that operates column by column, this can be achieved by computing $\wh \bfU_i^{\mathrm{temp}} = [\bfU_i^{\mathrm{temp}}, \wt \bfV_i] = \text{orth}(\bfU_i^0, \bfF_i(t_0,\bfY_i^0)\bfV_i^0)$ and setting $\wh \bfU_i^0 = [\bfU_i^0, \wt \bfV_i]$. This preserves orthonormality of $\wh \bfU_i^0$, since $\wt \bfV_i$ and $\bfU_i^0$ are orthonormal by the column-by-column computation of the QR. Another alternative approach that does not require a column-by-column factorization is the following. \\ Suppose two matrices $\bfU_i^0$ and $\bfK_i(t_1)$, where $\bfU_i^0$ is orthogonal. Then perform the following steps:
        \begin{enumerate}
            \item Compute the null space projection $\bfP_i = \bfK_i(t_1) - \bfU_i^0  \bfU_i^{0,*} \bfK_i(t_1)$.
            \item Compute an svd of $\bfP_i = \bfV_i \bfS_i \bfW_i^*$. Set $\wt \bfV_i$ as the first $s$ columns of $\bfV_i$ which corresponding singular values are above a threshold (e.g. $10^{-10}$). 
            \item Set $\wh \bfU_i^0 = [\bfU_i^0, \wt \bfV_i]$
        \end{enumerate}
        Moreover, it is advantageous to re-orthogonalize $\bfU_i^0$ before step~1 to ensure that we perform an orthogonal projection.
    \end{remark}

    \subsection{Robust second-order error bound}
    The design of the above integrator is carefully guided by several observations that we wish to summarize in this section. We collect these observations as several lemmas that will later be used to derive the second-order robust error bound of the presented integrator. To simplify notation, we make use of $\bfF_i(t, \bfZ) := \textbf{Mat}_i(F(t, \ten_i(\bfZ)))$, $\bfA_i = \mat_i(A)$, and $\bfY_i = \mat_i(Y)$. Moreover, we define $\mathcal{V} = \C^{n_1 \times \cdots \times n_d}$, recall $n_{\lnot i} := \prod_{j\neq i}n_j$ and $r_{\lnot i} := \prod_{j\neq i}r_j$ and often omit dependence of $F$ on $t$. All norms are the Frobenius norm for either matrices or tensors. In particular, we often use
    \begin{align*}
        \| A \| := \left(\sum_{j_1,\cdots,j_d = 1}^{n_1,\cdots,n_d} |A_{j_1,\cdots, j_d}|^2\right)^{1/2} = \left(\sum_{j = 1}^{n_i}\sum_{k = 1}^{n_{\lnot i}} |\left((\bfA_i)_{jk}\right)|^2\right)^{1/2} =: \| \bfA_i\|\,.
    \end{align*}
    Throughout our discussion we make the following assumptions:
    \begin{enumerate}[label=A\arabic*, ref=A\arabic*]
	\item\label{ass:bounded} $F$ is bounded and Lipschitz continuous, i.e.,
    \begin{align*}
        \Vert F(Y) - F(\tilde Y) \Vert &\leq L \Vert Y - \tilde Y \Vert \quad &&\forall \ Y,\tilde Y \in \mathcal{V}, \\
        \Vert F(Y) \Vert &\leq B  \quad &&\forall \ Y \in \mathcal{V}.
    \end{align*}
	\item\label{ass:eps-normal} Given the projector onto the tangent space at $Y = C\bigtimes_{j=1}^d \bfU_j$ with Tucker ranks $r = (r_1,\cdots, r_d)$ as
    \begin{align}\label{eq:projectorP}
    P(Y)Z := Z \bigtimes_{j=1}^d \bfU_j\bfU_j^{*} + \sum_{k = 1}^d Z \bigtimes_{j\neq k} \bfU_j\bfU_j^{*} \times_k (\bfI - \bfU_k\bfU_k^{*})\,,
    \end{align}	
	the normal component of $F(t, Y_k)$ which we denote as $R(Y_k) = (I-P(Y_k))F(t, Y_k)$ fulfills $\Vert R(Y_k)\Vert \leq \varepsilon_{r}$ for $0\leq kh \leq T$. This directly implies that the flux of the $i$-th matrixization $\bfY_i = \mat_i(\bfY)$ has bounded normal flux components \cite{LubichVandWalach}. That is, $\bfF_i(\bfY_i) = \bfM_i(\bfY_i) + \bfR_i(\bfY_i)$ with $\Vert \bfR_i(\bfY_i)\Vert \leq \varepsilon_{r_i}^{(i)} \leq \varepsilon_r$  where for $\bfY_i = \bfU_i\bfS_i\bfV_i^{*}$ we have
    \begin{align}
        \bfM_i(\bfY_i) =\,& \bfU_i\bfU_i^{*}\bfF_i(\bfY_i)(\bfI-\bfV_i\bfV_i^{*}) + \bfF_i(\bfY_i)\bfV_i\bfV_i^{*}\,, \label{eq:M_i_projection} \\
        \bfR_i(\bfY_i) =\,& (\bfI-\bfU_i\bfU_i^{*})\bfF_i(\bfY_i)(\bfI-\bfV_i\bfV_i^{*})\, \notag.
    \end{align}
    In the following, we will denote $\bar\varepsilon_r := \sum_{j=1}^d \varepsilon_{r_j}^{(j)}$. 
	\item\label{ass:init} The error at the initial condition is bounded by $\Vert Y^0 - A(0)\Vert \leq \delta$.
	\end{enumerate}
    Since the integrator is rank-adaptive, $\varepsilon_r$ and $\varepsilon_{2r}$ change at each time step since $r$ is changing. Therefore, we define $\varepsilon_r^{\mathrm{max}}$ and $\varepsilon_{2r}^{\mathrm{max}}$ as the largest values of $\varepsilon_r$ and $\varepsilon_{2r}$ that appear in the dynamic. With these assumptions at hand, we can state a robust global second-order error bound for the proposed algorithm.
    \begin{theorem}\label{thm:global_error_bound_Tucker}
     Under the assumptions \ref{ass:bounded}-\ref{ass:init}, the error of the numerical approximation $Y_n$ at time $t_n = t_0 + nh$, obtained with n time steps of the second-order parallel Tucker tensor integrator with step size $h>0$ and rank-truncation tolerance $\vartheta$, is bounded by
     \begin{align}
         ||Y_n - A(t_n)|| \leq c_0 \delta +  c_1 h^2 + c_2 h \varepsilon_r^{\mathrm{max}} + c_3 \varepsilon_{2r}^{\mathrm{max}} + c_4 n\vartheta,
     \end{align}
     where all appearing constants $c_0,\dots,c_4$ are independent of small singular values of matricizations of the connection tensors.
 \end{theorem}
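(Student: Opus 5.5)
The proof follows the standard pattern for robust BUG error analyses: a local error bound of order $h^3 + h^2\varepsilon_r + h\varepsilon_{2r}$, followed by Lady Windermere's fan with Lipschitz propagation of errors by the exact flow. The truncation contributes $\vartheta$ per step. Concretely, we write $Y_n - A(t_n)$ as a telescoping sum of local errors $\Phi_h^{\text{num}}(Y_{k}) - \Phi_h^{\text{exact}}(Y_k)$, each propagated by the exact flow with factor $e^{L(t_n-t_{k+1})}$. Summing $n$ local errors of size $c(h^3 + h^2\varepsilon_r^{\max} + h\varepsilon_{2r}^{\max}) + \vartheta$ then gives $c_1h^2 + c_2h\varepsilon_r^{\max} + c_3\varepsilon_{2r}^{\max} + c_4 n\vartheta$, and the initial error $\delta$ is propagated into $c_0\delta$. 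The whole task is therefore the local error estimate for one step starting from $Y^0$, with constants that depend only on $L$, $B$, $d$ and $T$.

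\textbf{Step 1: exactness lemmas.} Following \cite{kusch2024second}, the augmented basis $\wh\bfU_i^1$ and the augmented core $\widehat C^1$ are designed so that the untruncated result $\widehat Y^1 = \widehat C^1\bigtimes_j \wh\bfU_j^1$ satisfies two properties. First, $\widehat Y^1\bigtimes_j \wh\bfU_j^0\wh\bfU_j^{0,*}$ equals the Galerkin solution $\bar C(t_1)\bigtimes_j\wh\bfU_j^0$. Second, for each $i$, $\widehat Y^1\bigtimes_{j\neq i}\wh\bfU_j^0\wh\bfU_j^{0,*}\times_i\wt\bfU_i\wt\bfU_i^*$ reproduces the part of the $\bfK_i(t_1)$ solution that is orthogonal to $\wh\bfU_i^0$. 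We verify these properties directly from the defining equations of the Augment step and the unfolding formula \eqref{eq:unfolding_formula}. We also check that the choice $\bfK_i(t_0)=\wh\bfU_i^0\wh\bfU_i^{0,*}\bfU_i^0\bfS_i^0\bfQ_i^\top\overline{\wh\bfQ_i}$ gives $\bfK_i(t_0)\wh\bfV_i^{0,*}=\bfY_i^0$, using $\wh\bfQ_i\wh\bfQ_i^*\bfQ_i=\bfQ_i$. Consequently each $\bfK_i$-equation and the C-equation is a Galerkin projection of the full problem started exactly at $Y^0$.

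\textbf{Step 2: local error of the sub-flows.} We compare each Galerkin flow with the exact flow through a Taylor expansion to second order: $A(t_1)=Y^0+hF(Y^0)+\tfrac{h^2}{2}F'(Y^0)F(Y^0)+O(h^3)$. The key observation is that, by the pre-augmentation, $\wh\bfU_i^0$ contains $\bfU_i^0$ and $\bfF_i(Y^0)\bfV_i^0$, and $\wh\bfQ_i$ contains the corresponding first-order information in the co-range. Hence the projections $\wh\bfU_i^0\wh\bfU_i^{0,*}$ and $\wh\bfV_i^0\wh\bfV_i^{0,*}$ reproduce $Y^0$ exactly and reproduce $F(Y^0)$ up to the normal part $\bfR_i$, which is bounded by $\varepsilon_r$. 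A Gronwall argument then bounds each projected solution against the exact solution, and the error is expressed as an integral of $(I-\text{projector})F(Y(s))$. Writing $F(Y(s)) = F(Y^0) + O(s)$, the remaining first-order-in-$s$ term needs to be handled: the half-step information lies in the range of $\wh\bfU^1_i$ because $\bfK_i(t_1)$ was added. We therefore aim to show that $(I-\wh\bfU_i^1\wh\bfU_i^{1,*})A(t_1)$ and the mixed terms are $O(h^3 + h^2\varepsilon_r + h\varepsilon_{2r})$. This follows the matrix argument of \cite[Thm.~4.1]{kusch2024second} applied to each matricization $\bfA_i$, where the rank-$2r_i$ Galerkin space plays the role of the rank-$2r$ space and its normal part is bounded by $\varepsilon_{2r}$.

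\textbf{Step 3: assembling the tensor error.} We decompose $\widehat Y^1 - A(t_1)$ with the projectors $\wh\bfU_j^1\wh\bfU_j^{1,*}$ mode by mode, using a telescoping sum over the $d$ modes. We then show that the blocks of $\widehat C^1$ that are set to zero, namely those with two or more $\wt\bfU$-modes, correspond to components of $A(t_1)$ of size $O(h^2\cdot\text{normal}) + O(h^3)$. This holds because such a component requires two orthogonal directions, each arising with a factor $h$ and involving second derivatives or normal components. \emph{This is the main obstacle}: controlling the cross terms and the zero blocks without inverting any $\bfS_i^0$, so that robustness is preserved. We attempt this by bounding everything through $F$ and its Lipschitz constant, in the manner of \cite{ceruti2024parallel}, rather than through the core factors. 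The truncation adds at most $\vartheta$, which completes the local bound and, via Step 0, the theorem.
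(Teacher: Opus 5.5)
Your architecture matches the paper's: the basis defect $\|A(t_1)-A(t_1)\bigtimes_i\wh\bfU_i^1\wh\bfU_i^{1,*}\|$ telescoped over modes, a block-by-block comparison of $\widehat C^1$ with $A(t_1)\bigtimes_i\wh\bfU_i^{1,*}$, then Lady Windermere's fan. Step~1 is fine, and so is the Galerkin/Gronwall estimate for the $\wh\bfU^0$-block $\bar C(t_1)$. The gap is in how you get $O(h^3)$ everywhere else.

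A second-order Taylor expansion $A(t_1)=Y^0+hF(Y^0)+\tfrac{h^2}{2}\ddot A(t_0)+O(h^3)$ gives no control over the $h^2$ term outside the range of $\wh\bfU_i^1$ or in the discarded blocks. Assumption~\ref{ass:eps-normal} constrains values of $F$ at low-rank points, not derivatives of $F$, and $F$ is not even assumed differentiable. The missing idea is the midpoint rule $A(t_1)=Y^0+hF(A(t_{1/2}))+O(h^3)$, combined with a half-step approximation that lies in the pre-augmented spaces. Lemmas~\ref{le:halfstepbasis} and~\ref{le:approxK} give $\|\bfA_i(t_{1/2})-\wh\bfU_i^0\wh\bfU_i^{0,*}\bfK_i(t_{1/2})\wh\bfV_i^{0,*}\|=O(h^2+h\bar\varepsilon_r)$. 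So, at a cost of $O(h^3+h^2\bar\varepsilon_r)$, you may evaluate $\bfF_i$ at a rank-$2r$ point with mode-$i$ range in $\wh\bfU_i^0$ and co-range in $\wh\bfV_i^0$. Assumption~\ref{ass:eps-normal} at rank $2r$ then splits $\bfF_i$ there into three parts:
\begin{itemize}
\item a part with range in $\wh\bfU_i^0$;
\item a part $\bfF_i(\cdot)\wh\bfV_i^0\wh\bfV_i^{0,*}$, whose $h$-multiple agrees up to $O(h^3)$ with $(\bfK_i(t_1)-\bfK_i(t_0))\wh\bfV_i^{0,*}$ and is therefore annihilated by $\bfI-\wh\bfU_i^1\wh\bfU_i^{1,*}$;
\item an $\varepsilon_{2r}$ remainder.
\end{itemize}
This is Lemma~\ref{lemma:At_bound}, and it is where the $h\varepsilon_{2r}$ term comes from.

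You also cannot apply \cite[Thm.~4.1]{kusch2024second} per matricization as a black box. In the matrix case $\wh\bfV_0=\mathrm{orth}([\bfV_0,\bfF^*\bfU_0])$ contains the first-order co-range exactly. Here $\wh\bfV_i^0$ is built only from $\bigotimes_{j\neq i}\wh\bfU_j^0$ and the projected core $\wh C_F^0$, and it captures $\bfF_i(\bfY_i^0)^*\bfU_i^0$ only up to $\bar\varepsilon_r$. Proving that needs Lemma~\ref{le:Fapprox}, $\|F(Y^0)-F(Y^0)\bigtimes_j\wh\bfU_j^0\wh\bfU_j^{0,*}\|\le\bar\varepsilon_r$. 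Your claim that $\wh\bfQ_i$ ``contains the corresponding first-order information in the co-range'' is exactly this unproved step.

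Step~3 is left open by your own account, and your heuristic would not close it. ``Two orthogonal directions, each arising with a factor $h$'' suggests blocks of size $h^2$, which gives only first order globally. Your target $O(h^2\cdot\text{normal})+O(h^3)$ also misses the $h\varepsilon_{2r}$ contribution that genuinely occurs. The paper's argument is structural:
\begin{itemize}
\item With $Y_{1/2}:=\bar C(t_{1/2})\bigtimes_\ell\wh\bfU_\ell^0$ one has $A(t_1)=Y^0+hF(Y_{1/2})+O(h^3+h^2\varepsilon_r)$.
\item $Y^0$ is annihilated by every $\wt\bfU_j^*$.
\item $F(Y_{1/2})=M(Y_{1/2})+O(\varepsilon_{2r})$, and by \eqref{eq:projectorP} each term of the tangent projection at $Y_{1/2}$ has at most one mode outside the span of the $\wh\bfU_\ell^0$.
\end{itemize}
Since $\wt\bfU_j\perp\wh\bfU_j^0$, testing with $\wt\bfU$ in two or more modes annihilates $M(Y_{1/2})$. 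Hence the discarded blocks are $O(h^3+h^2\varepsilon_r+h\varepsilon_{2r})$.

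The same midpoint-plus-half-step device, applied simultaneously to $\bfA_i$ and $\bfK_i$, is what makes the single-$\wt\bfU_i$ blocks $\wt C_i^1$ accurate to third order up to normal components (Lemma~\ref{lemma:A_UU_Y}). It uses $\wt\bfU_i^*\bfY_i^0=0=\wt\bfU_i^*\bfK_i(t_0)$ and the Kronecker structure of $\wh\bfV_i^0$. Lipschitz-only estimates in the style of \cite{ceruti2024parallel} give $O(h^2)$ per block, i.e.\ only first order.
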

    
In the following, we prove a local error bound on the time interval $[t_0, t_1]$ where $A(t_0) = Y^0$. This can be extended to a general time $t_k$ by standard Lady Windermere's fan arguments \cite[Sections I.7 and
II.3]{HairerNorsettWanner:ODE_BOOK1}. 

We wish to provide a short overview of the idea of the proof:
\begin{itemize}
    \item A main ingredient that ensures second order accuracy is the choice of the pre-augmented basis $\widehat \bfU_i^0$ and the corresponding test matrix $\wh\bfV_i^{0}$, such that $\bfK_i(t_{1/2})\wh \bfV_0^*$ approximates the solution at the half-step $A(t_{1/2})$ up to $O(h^2 + h\bar\varepsilon_r)$ terms. This is proven in Lemma~\ref{le:approxK}. The property can later be used to show that the information propagated in $\bfK_i(t)$ can be recycled to approximate the core tensor at the next time step. Lemma~\ref{le:Fapprox} and Lemma~\ref{le:halfstepbasis} are needed to prove Lemma~\ref{le:approxK}. 
    \item Lemma~\ref{lemma:At_bound} proves a bound between the full solution and the full solution projected onto the basis $\wh \bfU_i^1, \ i=1,\dots,d$ of the proposed integrator at time $t_1$. 
    \item Lemma~\ref{lemma:A_UU_Y} finally proves a bound between the projected full dynamics and the solution of the second-order parallel integrator. 
    \item The proof of Theorem~\ref{thm:global_error_bound_Tucker} then easily follows from Lemma~\ref{lemma:At_bound} and Lemma~\ref{lemma:A_UU_Y}.
\end{itemize}

To provide a proof, let us first note a helpful property of the pre-augmented basis that will help us several times:
    \begin{lemma}\label{le:Fapprox}
        The pre-augmented basis spans the right-hand side at time $t_0$ up to normal components, that is,
        \begin{align}\label{eq:FapproximationTen}
            \| F(Y^0) - F(Y^0) \bigtimes_{j=1}^d \wh \bfU_j^{0} \wh \bfU_j^{0,*}  \|\leq \bar\varepsilon_r.
        \end{align}
        or, equivalently,  
        \begin{align}\label{eq:Fapproximation}
            \|\bfF_i(\bfY_i^0) - \wh\bfU_i^0\wh\bfU_i^{0,*}\bfF_i(\bfY_i^0) \bigotimes_{j \neq i} (\wh\bfU_j^0\wh\bfU_j^{0,*} )^\top \| \leq \bar\varepsilon_r\,.
        \end{align}
    \end{lemma}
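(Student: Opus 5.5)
The plan is to write $\wh\bfP_j := \wh\bfU_j^0\wh\bfU_j^{0,*}$ and $\bfP_j := \bfU_j^0\bfU_j^{0,*}$. Since $\bfU_j^0$ is contained in the range of $\wh\bfU_j^0$, we have $\wh\bfP_j\bfP_j = \bfP_j$, and the differences $\wh\bfP_j - \bfP_j$ are mutually commuting orthogonal projections in different modes. We then telescope the multilinear projection mode by mode:
\begin{align*}
F(Y^0) - F(Y^0)\bigtimes_{j=1}^d \wh\bfP_j = \sum_{i=1}^d F(Y^0)\bigtimes_{j<i}\wh\bfP_j \times_i (\bfI - \wh\bfP_i).
\end{align*}
Because the operators $Z\mapsto Z\bigtimes_{j<i}\wh\bfP_j$ act in modes different from $i$ and are contractions, each summand satisfies
\begin{align*}
\Big\| F(Y^0)\bigtimes_{j<i}\wh\bfP_j \times_i (\bfI-\wh\bfP_i)\Big\| \leq \| F(Y^0)\times_i (\bfI-\wh\bfP_i)\| = \|(\bfI-\wh\bfP_i)\bfF_i(\bfY_i^0)\|.
\end{align*}
It therefore suffices to show that $\|(\bfI-\wh\bfP_i)\bfF_i(\bfY_i^0)\| \le \varepsilon_{r_i}^{(i)}$. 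Summing over $i$ then gives $\bar\varepsilon_r$ and proves \eqref{eq:FapproximationTen}.

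For the single-mode bound, the key observation is that $\bfV_i^0$ from the pre-augmentation step is exactly the right orthonormal factor of the matricization $\bfY_i^0$. By the unfolding formula \eqref{eq:unfolding_formula},
\begin{align*}
\bfY_i^0 = \bfU_i^0\,\mat_i(C^0)\bigotimes_{j\ne i}\bfU_j^{0,\top} = \bfU_i^0\wt\bfS_i\wt\bfQ_i^\top\bigotimes_{j\ne i}\bfU_j^{0,\top} = \bfU_i^0\wt\bfS_i\bfV_i^{0,*}.
\end{align*}
Here $\bfV_i^0$ has orthonormal columns, being the product of orthonormal factors, so this is a factorization $\bfY_i^0=\bfU_i\bfS_i\bfV_i^*$ of the form used in \ref{ass:eps-normal}. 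Decompose $\bfF_i(\bfY_i^0) = \bfM_i(\bfY_i^0) + \bfR_i(\bfY_i^0)$ as in \eqref{eq:M_i_projection}. The range of $\bfM_i$ lies in $\mathrm{span}(\bfU_i^0)+\mathrm{span}(\bfF_i(\bfY_i^0)\bfV_i^0)$, which is the range of $\wh\bfU_i^0$ by construction. Hence $(\bfI-\wh\bfP_i)\bfM_i = 0$, and
\begin{align*}
\|(\bfI-\wh\bfP_i)\bfF_i(\bfY_i^0)\| = \|(\bfI-\wh\bfP_i)\bfR_i(\bfY_i^0)\| \le \|\bfR_i(\bfY_i^0)\| \le \varepsilon_{r_i}^{(i)}.
\end{align*}

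The equivalence with \eqref{eq:Fapproximation} is immediate. Applying $\mat_i$ together with the unfolding formula to $F(Y^0)\bigtimes_j\wh\bfP_j$ gives $\wh\bfP_i\bfF_i(\bfY_i^0)\bigotimes_{j\ne i}\wh\bfP_j^\top$, and matricization preserves the Frobenius norm.

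The main point requiring care is checking that $\bfV_i^0$ is genuinely the orthonormal right factor of $\bfY_i^0$, so that assumption \ref{ass:eps-normal} applies with exactly this basis. This relies on the unfolding formula holding with transposes (not adjoints) in the complex case, and on the fact that the normal-component bound is independent of the particular factorization, since $\bfV_i\bfV_i^*$ is the projector onto the row space. A second point is the contraction step, which needs the Kronecker structure of the projections in the other modes.
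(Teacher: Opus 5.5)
Your proof is correct and follows the paper's argument. The paper peels off one mode at a time, which unrolls to exactly your telescoping sum, drops the projections in the other modes by contraction, and bounds each term by $\|(\bfI-\wh\bfU_i^0\wh\bfU_i^{0,*})\bfF_i(\bfY_i^0)\|\le\varepsilon_{r_i}^{(i)}$; it takes that single-mode bound as immediate from the construction of $\wh\bfU_i^0$, whereas you justify it via $(\bfI-\wh\bfU_i^0\wh\bfU_i^{0,*})\bfM_i(\bfY_i^0)=0$ and the identification $\bfV_i^{0,*}=\wt\bfQ_i^\top\bigotimes_{j\neq i}\bfU_j^{0,\top}$ forced by the unfolding formula (the paper prints $\bfU_j^{0,*}$ there, which agrees with yours only for real data, so your transposed version is the correct one in the complex case).
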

    \begin{proof}
        With $\bfY_i^0 := \mat_i(Y^0) = \bfU_i^0\bfS_i^0\bfV_i^{0,*}$ we have
        \begin{align*}
        \| F(Y^0) - F(Y^0) \bigtimes_{j=1}^d \wh \bfU_j^{0} \wh \bfU_j^{0,*}  \| &= \| F(Y^0) \pm F(Y^0) \times_1 \wh \bfU_1^{0} \wh \bfU_1^{0,*} - F(Y^0) \bigtimes_{j=1}^d \wh \bfU_j^{0} \wh \bfU_j^{0,*}  \| \\
        &\leq \| F(Y^0) \times_1 (\bfI - \wh \bfU_1^{0} \wh \bfU_1^{0,*}) \| \\
        &+ \| (F(Y^0) - F(Y^0) \bigtimes_{j=2}^d \wh \bfU_j^{0} \wh \bfU_j^{0,*}) \times_1  \wh \bfU_1^{0} \wh \bfU_1^{0,*} \| \\
        &\leq \| (\bfI -  \wh \bfU_1^{0} \wh \bfU_1^{0,*})\bfF_1(\bfY_1^0) \| + \| (F(Y^0) - F(Y^0) \bigtimes_{j=2}^d \wh \bfU_j^{0} \wh \bfU_j^{0,*}) \| \\
        &\leq  \varepsilon_{r_1}^{(1)}+ \| (F(Y^0) - F(Y^0) \bigtimes_{j=2}^d \wh \bfU_j^{0} \wh \bfU_j^{0,*}) \|\,.
    \end{align*}
    Repeating this process over all $d$ modes yields \eqref{eq:FapproximationTen}. A matricization of this result directly gives \eqref{eq:Fapproximation}. 
    \end{proof}
    A crucial step to constructing a suitable pre-augmented basis is to ensure that $\bfA(t_{1/2})$ is spanned with local second order accuracy. With a forward Euler time discretization and the boundedness of normal components it can directly be shown that a suitable basis is $\wh \bfU_i^0$ for the range and $\bar \bfV_i = \text{orth}(\bfV_i^0, \bfF_i(\bfY_i^0)^{*}\bfU_i^0)$ for the co-range. Unfortunately, constructing the basis $\bar \bfV_i$ is computationally infeasible since, while its tensorization is of rank $r_i$ in mode $i$, it is generally of full rank in all remaining modes. It turns out, however, that we can find pre-augmented bases $\widehat \bfU_i^0$ and $\widehat \bfV_i^0$ that are computationally attainable while spanning the half step with sufficient accuracy.
    This approximation result is summarized in the following lemma:
    \begin{lemma}\label{le:halfstepbasis}
        Given the basis $\widehat \bfV_i^0$ and $\widehat \bfU_i^0$ as defined in the above algorithm for $i\in\{1,\cdots, d\}$, we have
        \begin{align*}
            \Vert \bfA_i(t_{1/2}) - \widehat \bfU_i^0\widehat \bfU_i^{0,*}\bfA_i(t_{1/2}) \widehat \bfV_i^0\widehat \bfV_i^{0,*} \Vert \leq C h^2 + h \bar\varepsilon_r + h\varepsilon_{r_i}^{(i)}\,,
        \end{align*}
        where $C$ only depends on the bound and Lipschitz constant of $F$.
    \end{lemma}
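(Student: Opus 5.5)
We use a forward Euler expansion of the exact solution around $t_0$, where $A(t_0)=Y^0$. Since $F$ is bounded and Lipschitz (Assumption~\ref{ass:bounded}), we have
\begin{align*}
A(t_{1/2}) = Y^0 + \tfrac{h}{2}F(Y^0) + E, \qquad \|E\|\le C h^2 .
\end{align*}
Indeed, $A(t_{1/2})-Y^0-\tfrac h2 F(Y^0)=\int_{t_0}^{t_{1/2}}\bigl(F(A(s))-F(Y^0)\bigr)\,ds$, and $\|A(s)-Y^0\|\le B(s-t_0)$. The map $\bfZ\mapsto\bfZ-\wh\bfU_i^0\wh\bfU_i^{0,*}\bfZ\wh\bfV_i^0\wh\bfV_i^{0,*}$ is the complement of a composition of orthogonal projections, so it has norm at most one. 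Hence the $E$-part contributes at most $Ch^2$. It then suffices to bound the defect separately for $\bfY_i^0$ and for $\tfrac h2\bfF_i(\bfY_i^0)$, after matricizing in mode $i$.

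For the $\bfY_i^0$ part, the defect vanishes exactly. The first $r_i$ columns of $\wh\bfU_i^0$ are $\bfU_i^0$ by construction, so $\wh\bfU_i^0\wh\bfU_i^{0,*}\bfY_i^0=\bfY_i^0$. By the unfolding formula \eqref{eq:unfolding_formula}, $\bfY_i^0=\bfU_i^0\,\mat_i(C^0)\bigotimes_{j\ne i}\bfU_j^{0,\top}$. We also have $\bfU_j^0=\wh\bfU_j^0\wh\bfU_j^{0,*}\bfU_j^0$, so this equals $\bfU_i^0\,\mat_i(C^0\bigtimes_{j\neq i}\wh\bfU_j^{0,*}\bfU_j^0)\bigotimes_{j\ne i}\wh\bfU_j^{0,\top}=\bfU_i^0\bfS_i^0\bfQ_i^\top\bigotimes_{j\neq i}\wh\bfU_j^{0,\top}$. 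The row space of this matrix lies in the range of $\wh\bfV_i^{0,*}=\wh\bfQ_i^\top\bigotimes_{j\ne i}\wh\bfU_j^{0,\top}$, because $\bfQ_i$ forms the first block of $\wh\bfQ_i$, so $\overline{\bfQ_i}$ lies in the span of $\overline{\wh\bfQ_i}$. Therefore $\bfY_i^0\wh\bfV_i^0\wh\bfV_i^{0,*}=\bfY_i^0$. The only care needed here is with the conjugations and transposes.

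For the $\bfF_i(\bfY_i^0)$ part, we use the splitting $\bfF_i=\bfM_i+\bfR_i$ from \eqref{eq:M_i_projection}, with $\bfV_i^0$ the co-range of $\bfY_i^0$ as in the pre-augmentation step. The normal part is at most $\tfrac h2\varepsilon_{r_i}^{(i)}$. The part $\bfF_i(\bfY_i^0)\bfV_i^0\bfV_i^{0,*}$ lies in the range of $\wh\bfU_i^0$ by construction of $\wh\bfU_i^0$. The part $\bfU_i^0\bfU_i^{0,*}\bfF_i(\bfY_i^0)$ also lies in that range, since $\bfU_i^0\subset\wh\bfU_i^0$. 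Up to $\tfrac h2\varepsilon_{r_i}^{(i)}$, it therefore remains to show that $\bfG:=\wh\bfU_i^0\wh\bfU_i^{0,*}\bfF_i(\bfY_i^0)$ is reproduced by the right projection $\wh\bfV_i^0\wh\bfV_i^{0,*}$ up to $\bar\varepsilon_r$. By Lemma~\ref{le:Fapprox}, $\bfF_i(\bfY_i^0)$ may be replaced by $\wh\bfU_i^0\wh\bfU_i^{0,*}\bfF_i(\bfY_i^0)\bigotimes_{j\ne i}(\wh\bfU_j^0\wh\bfU_j^{0,*})^\top$ at cost $\bar\varepsilon_r$. This equals $\wh\bfU_i^0\,\mat_i(\widehat C_F^0)\bigotimes_{j\neq i}\wh\bfU_j^{0,\top}$. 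We split $\wh\bfU_i^0\wh\bfU_i^{0,*}=\bfU_i^0\bfU_i^{0,*}+(\wh\bfU_i^0\wh\bfU_i^{0,*}-\bfU_i^0\bfU_i^{0,*})$. The first piece gives $\bfU_i^0\,\mat_i(\widehat C_F^0\times_i\bfU_i^{0,*}\wh\bfU_i^0)\bigotimes_{j\ne i}\wh\bfU_j^{0,\top}$, whose row space is spanned by $\wh\bfV_i^{0,*}$ because the second block of $\wh\bfQ_i$ is exactly $\mat_i(\widehat C_F^0\times_i\bfU_i^{0,*}\wh\bfU_i^0)^\top$. The second piece is $(\bfI-\bfU_i^0\bfU_i^{0,*})$ applied to $\bfG$. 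Its tangent part $(\bfI-\bfU_i^0\bfU_i^{0,*})\bfF_i\bfV_i^0\bfV_i^{0,*}$ is not obviously in the row span, and this is the main obstacle.

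To handle it, we would use the fact that $\bfV_i^0\bfV_i^{0,*}$ projects onto the row space of $\bfY_i^0$. That row space is $\mathrm{span}(\overline{\bfQ_i})\otimes$(mode-$\neg i$ bases $\wh\bfU_j^0$), which by the previous paragraph lies in the range of $\wh\bfV_i^0$. Hence $\bfV_i^0\bfV_i^{0,*}=\bfV_i^0\bfV_i^{0,*}\wh\bfV_i^0\wh\bfV_i^{0,*}$ exactly, and $\bfF_i\bfV_i^0\bfV_i^{0,*}$ is invariant under the right projection. The remaining piece $(\bfI-\bfU_i^0\bfU_i^{0,*})\bfF_i(\bfI-\bfV_i^0\bfV_i^{0,*})$ is the normal component and contributes at most $\tfrac h2\varepsilon_{r_i}^{(i)}$. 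Collecting the terms, the $\bfF$-part contributes at most $\tfrac h2(2\bar\varepsilon_r+2\varepsilon_{r_i}^{(i)})$, i.e. $\le h\bar\varepsilon_r+h\varepsilon_{r_i}^{(i)}$. Together with the $Ch^2$ from $E$, this gives the claimed bound.
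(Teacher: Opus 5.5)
Your proof is correct and follows essentially the same route as the paper. Both use a forward Euler expansion, the splitting $\bfF_i(\bfY_i^0)=\bfM_i(\bfY_i^0)+\bfR_i(\bfY_i^0)$, and Lemma~\ref{le:Fapprox} to show that the second block of $\wh\bfQ_i$ places $\bfU_i^{0,*}\bfF_i(\bfY_i^0)$ in the row span of $\wh\bfV_i^{0,*}$ up to $\bar\varepsilon_r$. You are more explicit than the paper about why $\bfY_i^0$ and $\bfF_i(\bfY_i^0)\bfV_i^0\bfV_i^{0,*}$ are invariant under right multiplication by $\wh\bfV_i^0\wh\bfV_i^{0,*}$, namely that the range of $\bfV_i^0$ lies in that of $\wh\bfV_i^0$. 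The detour through $\mathbf{G}$ is unnecessary, however: if you work directly with $\bfM_i(\bfY_i^0)=\bfU_i^0\bfU_i^{0,*}\bfF_i(\bfY_i^0)(\bfI-\bfV_i^0\bfV_i^{0,*})+\bfF_i(\bfY_i^0)\bfV_i^0\bfV_i^{0,*}$, as the paper does, the second term is handled by that range inclusion and the first costs only $\bar\varepsilon_r$.
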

    \begin{proof}
    Using a forward Euler (FE) time discretization and boundedness of the normal component in mode $i$, we have
    \begin{align}\label{eq:A_ihalfexpand}
        \bfA_i(t_{1/2}) \stackrel{\mathrm{FE}}{=}\,& \bfY_i^0 + \frac12 h \bfF_i(\bfY_i^0) + O(h^2) \nonumber\\
        \stackrel{\ref{ass:eps-normal}}{=}\,& \bfY_i^0 + \frac{h}{2} (\bfU_i^0\bfU_i^{0,*}\bfF_i(\bfY_i^0)(\bfI - \bfV_i^0\bfV_i^{0,*}) + \bfF_i(\bfY_i^0)\bfV_i^0\bfV_i^{0,*}) + O\big(h^2 + h\varepsilon_{r_i}^{(i)}\big)\,.
    \end{align}
    To construct a computationally attainable basis to span the co-range of $\bfA_i(t_{1/2})$, we use Lemma~\ref{le:Fapprox}, in particular \eqref{eq:Fapproximation}, which yields 
    \begin{align*}
        \bfU_i^{0,*}\bfF_i(\bfY_i^0) \stackrel{\eqref{eq:Fapproximation}}{=}\,& \bfU_i^{0,*}\wh\bfU_i^0\wh\bfU_i^{0,*}\bfF_i(\bfY_i^0) \bigotimes_{j \neq i} (\wh\bfU_j^0\wh\bfU_j^{0,*} )^\top + \mathcal{O}\big(\bar\varepsilon_r\big)\\
        =\,& \bfU_i^{0,*}\wh\bfU_i^0 \,\mat_i(F(Y^0) \bigtimes_{\ell=1}^d \wh\bfU_{\ell}^{0,*}) \bigotimes_{j \neq i}\wh\bfU_j^{0,\top} + \mathcal{O}\big(\bar\varepsilon_r\big)\\
        =\,& \bfU_i^{0,*}\wh\bfU_i^0 \,\mat_i(\widehat C_F^0) \bigotimes_{j \neq i}\wh\bfU_j^{0,\top} + \mathcal{O}\big(\bar\varepsilon_r\big)\,.
    \end{align*}
    Thus, if we define the QR factorization $\bfQ\bfR = \mat_i(\widehat C_F^0)^{\top} (\bfU_i^{0,*}\wh\bfU_i^0)^{\top} \in\mathbb{C}^{2r_{\lnot i} \times r_i}$, the matrix $\bigotimes_{j \neq i} \wh\bfU_j^0 \; \bfQ$ spans $\bfF_i(\bfY_i^0)^{*}\bfU_i^0$ with sufficient accuracy while its tensorization is of rank $r_i$ in mode $i$ as well as $2r_j$ in each mode $j \neq i$. We can then conclude that
    $\wh\bfV_i^0 \in\mathbb{C}^{n_{\lnot i} \times 2r_i}$ spans the column space of $\bfA_i(t_{1/2})^{*}\in\mathbb{C}^{n_{\lnot i} \times n_i}$ due to its construction, while $\wh\bfU_i^0$ spans the column space of $\bfA_i(t_{1/2})$ with sufficient accuracy. That is, multiplying \eqref{eq:A_ihalfexpand} from the left with $\wh\bfU_i^0$ and from the right with $\wh\bfV_i^0$ spans all terms exactly, where in particular $\bfU_i^{0,*}\bfF_i(\bfY_i^0)\wh\bfV_i^0\wh\bfV_i^{0,*} = \bfU_i^{0,*}\bfF_i(\bfY_i^0)$.
    \end{proof}
    Further, let us also remark that by construction, $\widehat \bfU_i^0$ and $\widehat \bfV_i^0$ span $\bfY_i^0$. A consequence of using a sufficiently accurate basis $\widehat \bfV_i^0$ in the $\bfK$-step of mode $i$ is that $\bfK_{i}(t_{1/2})\widehat \bfV_i^{0,*}$ is a sufficiently accurate approximation of $\bfA_i(t_{1/2})$, namely
    \begin{lemma}\label{le:approxK}
    Given the assumptions above, the full half-step fulfills
    \begin{subequations}
        \begin{align}
            \Vert \bfA_i(t_{1/2}) - \bfK_i(t_{1/2})\wh\bfV_i^{0,*} \Vert \leq\,& C_1 h^2 + h\bar\varepsilon_r + h\varepsilon^{(i)}_r\,, \label{eq:approxKa}\\
            \Vert \bfA_i(t_{1/2}) - \wh\bfU_i^0\wh\bfU_i^{0,*}\bfK_i(t_{1/2})\wh\bfV_i^{0,*} \Vert \leq\,& C_2 h^2 + h\bar\varepsilon_r + h\varepsilon^{(i)}_r\,,\label{eq:approxKb}
        \end{align}
        \end{subequations}
        where $C_{1,2}$ only depend on the bound and Lipschitz constant of $F$.
    \end{lemma}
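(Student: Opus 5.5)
The plan is to compare $\bfK_i(t)\wh\bfV_i^{0,*}$ with the projected exact trajectory $\bfA_i(t)\wh\bfV_i^0\wh\bfV_i^{0,*}$ on $[t_0,t_{1/2}]$, and then invoke Lemma~\ref{le:halfstepbasis}. Right-multiplying the full equation by $\wh\bfV_i^0$ gives $\frac{d}{dt}(\bfA_i\wh\bfV_i^0) = \bfF_i(\bfA_i)\wh\bfV_i^0$. The K-step reads $\dot\bfK_i = \bfF_i(\bfK_i\wh\bfV_i^{0,*})\wh\bfV_i^0$. Subtracting these yields
\begin{align*}
\frac{d}{dt}\big(\bfK_i - \bfA_i\wh\bfV_i^0\big) = \big(\bfF_i(\bfK_i\wh\bfV_i^{0,*}) - \bfF_i(\bfA_i)\big)\wh\bfV_i^0 .
\end{align*}
The right-hand side is bounded by $L\|\bfK_i\wh\bfV_i^{0,*} - \bfA_i\wh\bfV_i^0\wh\bfV_i^{0,*}\| + L\|\bfA_i - \bfA_i\wh\bfV_i^0\wh\bfV_i^{0,*}\|$. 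With orthonormal $\wh\bfV_i^0$ the first term equals $L\|\bfK_i - \bfA_i\wh\bfV_i^0\|$. Gronwall then gives, for $t\in[t_0,t_{1/2}]$,
\begin{align*}
\|\bfK_i(t) - \bfA_i(t)\wh\bfV_i^0\| \le e^{Lh}\Big(\|\bfK_i(t_0) - \bfA_i(t_0)\wh\bfV_i^0\| + L\int_{t_0}^{t}\|\bfA_i(s)(\bfI - \wh\bfV_i^0\wh\bfV_i^{0,*})\|\,ds\Big).
\end{align*}

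Next I bound the two terms in the Gronwall estimate.

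\emph{Initial term.} Here $\bfA_i(t_0) = \bfY_i^0 = \bfU_i^0\bfS_i^0\bfQ_i^\top\bigotimes_{j\ne i}\wh\bfU_j^{0,\top}$. This identity uses $\wh\bfU_j^0\wh\bfU_j^{0,*}\bfU_j^0 = \bfU_j^0$ and the definition of $\bfQ_i\bfS_i^{0,\top}$. Since the columns of $\bfQ_i$ lie in the span of $\wh\bfQ_i$, we get $\bfQ_i^\top\overline{\wh\bfQ_i}\wh\bfQ_i^\top = \bfQ_i^\top$. Together with $\wh\bfU_i^0\wh\bfU_i^{0,*}\bfU_i^0 = \bfU_i^0$, this gives $\bfK_i(t_0) = \bfY_i^0\wh\bfV_i^0$ exactly, so the initial term vanishes. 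I will check the conjugation conventions in $\wh\bfV_i^{0,*} = \wh\bfQ_i^\top\bigotimes\wh\bfU_j^{0,\top}$ carefully at this point.

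\emph{Integrand.} I want $\|\bfA_i(s)(\bfI - \wh\bfV_i^0\wh\bfV_i^{0,*})\| = O(h^2 + h\bar\varepsilon_r + h\varepsilon_{r_i}^{(i)})$ uniformly for $s\in[t_0,t_{1/2}]$. The argument repeats the proof of Lemma~\ref{le:halfstepbasis} with $h/2$ replaced by $s-t_0$. We expand $\bfA_i(s) = \bfY_i^0 + (s-t_0)\bfF_i(\bfY_i^0) + O(h^2)$ and split $\bfF_i$ via \ref{ass:eps-normal}. Then $\bfY_i^0$, $\bfF_i(\bfY_i^0)\bfV_i^0\bfV_i^{0,*}$ and $\bfU_i^{0,*}\bfF_i(\bfY_i^0)$ are reproduced by $\wh\bfV_i^0\wh\bfV_i^{0,*}$, the last one up to $O(\bar\varepsilon_r)$ by Lemma~\ref{le:Fapprox}. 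Because every non-reproduced term carries a factor $s-t_0\le h$, these errors appear as $h\bar\varepsilon_r$ and $h\varepsilon_{r_i}^{(i)}$. Integrating over an interval of length $h/2$ therefore gives a bound of $O(h^3 + h^2\bar\varepsilon_r + h^2\varepsilon_{r_i}^{(i)})$, even better than needed.

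At $t = t_{1/2}$, the triangle inequality gives $\|\bfA_i - \bfK_i\wh\bfV_i^{0,*}\| \le \|\bfA_i(\bfI-\wh\bfV_i^0\wh\bfV_i^{0,*})\| + \|\bfA_i\wh\bfV_i^0 - \bfK_i\|$, which proves \eqref{eq:approxKa}. For \eqref{eq:approxKb}, I insert $\wh\bfU_i^0\wh\bfU_i^{0,*}$. Its contraction property gives $\|\wh\bfU_i^0\wh\bfU_i^{0,*}(\bfA_i\wh\bfV_i^0 - \bfK_i)\wh\bfV_i^{0,*}\|\le\|\bfA_i\wh\bfV_i^0 - \bfK_i\|$. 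The remainder $\|\bfA_i - \wh\bfU_i^0\wh\bfU_i^{0,*}\bfA_i\wh\bfV_i^0\wh\bfV_i^{0,*}\|$ is exactly the quantity bounded in Lemma~\ref{le:halfstepbasis}.

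I expect the main obstacle to be the uniform-in-$s$ version of the co-range estimate, together with keeping the coefficient of $\bar\varepsilon_r$ equal to $h$ rather than $O(1)$. This requires that the $\varepsilon$-errors multiply only the $(s-t_0)\bfF_i$ term and never $\bfY_i^0$. Verifying the exact initial identity under the complex conjugation conventions is the other delicate point.
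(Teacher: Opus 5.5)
Your proof is correct, but it takes a different route from the paper. The paper uses no Gronwall argument. After splitting off $\|\bfA_i(t_{1/2})-\wh\bfU_i^0\wh\bfU_i^{0,*}\bfA_i(t_{1/2})\wh\bfV_i^0\wh\bfV_i^{0,*}\|$ with Lemma~\ref{le:halfstepbasis}, it does a single forward Euler expansion about $t_0$ of both $\wh\bfU_i^0\wh\bfU_i^{0,*}\bfA_i(t_{1/2})\wh\bfV_i^0$ and $\bfK_i(t_{1/2})$. The zeroth-order terms agree because $\bfK_i(t_0)=\bfY_i^0\wh\bfV_i^0$. The first-order terms agree because $\bfK_i(t_0)\wh\bfV_i^{0,*}=\bfY_i^0$ and $\wh\bfU_i^0$ spans $\bfM_i(\bfY_i^0)$. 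This leaves $O(h^2+h\varepsilon_{r_i}^{(i)})$. For \eqref{eq:approxKb} the paper adds a second Euler step to bound $\|(\bfI-\wh\bfU_i^0\wh\bfU_i^{0,*})\bfK_i(t_{1/2})\|$, see \eqref{eq:le2_approxK}.

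Your comparison of $\bfK_i(t)$ with $\bfA_i(t)\wh\bfV_i^0$ on all of $[t_0,t_{1/2}]$ needs the co-range part of Lemma~\ref{le:halfstepbasis} uniformly in $s$, not only at $t_{1/2}$. As you say, that is the same argument with $h/2$ replaced by $s-t_0$. Your integral-form remainder uses only the bound and Lipschitz constant of $F$, consistent with the statement. In exchange, your route shows that the co-range projection error is the only $O(h^2)$ contribution to \eqref{eq:approxKa}. It also gives the sharper intermediate estimate $\|\bfK_i(t_{1/2})-\bfA_i(t_{1/2})\wh\bfV_i^0\|=O(h^3+h^2(\bar\varepsilon_r+\varepsilon_{r_i}^{(i)}))$, one order better than the Taylor matching yields.

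The paper's route is shorter, and it produces \eqref{eq:le2_approxK} as a byproduct, which is reused in Lemmas~\ref{lemma:At_bound} and~\ref{lemma:A_UU_Y}. With your route you would recover that bound by splitting $(\bfI-\wh\bfU_i^0\wh\bfU_i^{0,*})\bfK_i(t_{1/2})$ into two pieces. The piece $(\bfI-\wh\bfU_i^0\wh\bfU_i^{0,*})\bfA_i(t_{1/2})\wh\bfV_i^0$ is $O(h^2+h\varepsilon_{r_i}^{(i)})$, since $\wh\bfU_i^0$ spans $\bfY_i^0$ and $\bfM_i(\bfY_i^0)$. The piece $(\bfI-\wh\bfU_i^0\wh\bfU_i^{0,*})(\bfK_i(t_{1/2})-\bfA_i(t_{1/2})\wh\bfV_i^0)$ is controlled by your Gronwall bound.
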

    \begin{proof}
        Applying Lemma~\ref{le:halfstepbasis}, we directly obtain
        \begin{align*}
            \Vert \bfA_i(t_{1/2}) - \bfK_i(t_{1/2})\wh\bfV_i^{0,*} \Vert \leq\,& \| \bfA_i(t_{1/2}) - \wh\bfU_i^0\wh\bfU_i^{0,*}\bfA_i(t_{1/2})\wh\bfV_i^{0}\wh\bfV_i^{0,*} \|\\
            \,&+ \| \wh\bfU_i^0\wh\bfU_i^{0,*}\bfA_i(t_{1/2})\wh\bfV_i^{0}\wh\bfV_i^{0,*} - \bfK_i(t_{1/2})\wh\bfV_i^{0,*} \| \\
            \leq\,& \| \wh\bfU_i^0\wh\bfU_i^{0,*}\bfA_i(t_{1/2})\wh\bfV_i^{0} - \bfK_i(t_{1/2}) \| + C h^2 + h \bar\varepsilon_r + h\varepsilon_{r_i}^{(i)}.
        \end{align*}
        %Using Assumption~\ref{ass:bounded}, the forward Euler method then gives
        %\begin{align*}
        %    \wh\bfU_i^0\wh\bfU_i^{0,*}\bfA_i(t_{1/2})\wh\bfV_i^{0} =\,& \wh\bfU_i^0\wh\bfU_i^{0,*}\bfY_i^0\wh\bfV_i^{0}  + \frac{h}{2} \wh\bfU_i^0\wh\bfU_i^{0,*}\bfF_i(\bfY_i^0)\wh\bfV_i^{0} + O(h^2) \\
        %    =\,& \bfY_i^0\wh\bfV_i^{0} + \frac{h}{2} \bfF_i(\bfY_i^0)\wh\bfV_i^{0}+ O(h^2)\,, \\
        %    \bfK_i(t_{1/2}) =\,& \bfK_i(t_0) + \frac{h}{2} \bfF_i(\bfK_i(t_0)\widehat \bfV_i^{0,*}) \widehat \bfV_i^0 + O(h^2)\,,
        %\end{align*}
        %where we have used that $\wh\bfU_i^0$ spans $\bfY_i^0$ and $\bfF_i(\bfY_i^0)\wh\bfV_i^{0}$ by construction. I.e. we used $\wh\bfU_i^0\wh\bfU_i^{0,*} \bfY_i^0 = \bfY_i^0$ and $\wh\bfU_i^0\wh\bfU_i^{0,*} \bfF_i(\bfY_i^0)\wh\bfV_i^{0} = \bfF_i(\bfY_i^0)\wh\bfV_i^{0}$. 
        %\ds{Gilt das nicht nur bis auf $\bar \epsilon_r$ für die Identität $\wh\bfU_i^0\wh\bfU_i^{0,*} \bfF_i(\bfY_i^0)\wh\bfV_i^{0} = \bfF_i(\bfY_i^0)\wh\bfV_i^{0}$?} \jknote{Ja richtig, daher habe ich darunter den roten Text. Das würde den Absatz vorher ersetzen. Genauso wie auch weiter unten.}
        %\ds{Meine Frage bezog sich auf die folgende Gleichung. Ist diese nicht nur richtig bis auf $\bar\eps_r$? $\wh\bfU_i^0\wh\bfU_i^{0,*} \bfF_i(\bfY_i^0)\wh\bfV_i^{0} = \bfF_i(\bfY_i^0)\wh\bfV_i^{0} + \bar\epsilon_r$? Oder warum gilt das?}\jknote{Ja richtig, das gilt nur bis auf $\varepsilon_r$. Der Text oben sollte aber eh rausgenommen und gegen den roten Text unten ersetzt werden. Der schwarze Text war falsch, der rote ist die Korrektur. Ich habe den schwarzen Text mal auskommentiert.}
        
        Using Assumption~\ref{ass:bounded}, the forward Euler method then gives
        \begin{align*}
            \wh\bfU_i^0\wh\bfU_i^{0,*}\bfA_i(t_{1/2})\wh\bfV_i^{0} =\,& \wh\bfU_i^0\wh\bfU_i^{0,*}\bfY_i^0\wh\bfV_i^{0}  + \frac{h}{2} \wh\bfU_i^0\wh\bfU_i^{0,*}\bfF_i(\bfY_i^0)\wh\bfV_i^{0} + O(h^2) \\
            =\,& \bfY_i^0\wh\bfV_i^{0} + \frac{h}{2} \wh\bfU_i^0\wh\bfU_i^{0,*}\bfP(\bfY_i^0)\bfF_i(\bfY_i^0)\wh\bfV_i^{0}+ O(h^2 + h\varepsilon_{r_i}^{(i)})\,, \\
            =\,& \bfY_i^0\wh\bfV_i^{0} + \frac{h}{2} \bfP(\bfY_i^0)\bfF_i(\bfY_i^0)\wh\bfV_i^{0}+ O(h^2 + h\varepsilon_{r_i}^{(i)})\,, \\
            =\,& \bfY_i^0\wh\bfV_i^{0} + \frac{h}{2} \bfF_i(\bfY_i^0)\wh\bfV_i^{0}+ O(h^2 + h\varepsilon_{r_i}^{(i)})\,,\\
            \bfK_i(t_{1/2}) =\,& \bfK_i(t_0) + \frac{h}{2} \bfF_i(\bfK_i(t_0)\widehat \bfV_i^{0,*}) \widehat \bfV_i^0 + O(h^2)\,,
        \end{align*}
        where we have used that $\wh\bfU_i^0$ spans $\bfY_i^0$ and $\bfP(\bfY_i^0)\bfF_i(\bfY_i^0)$ by construction. I.e., we used $\wh\bfU_i^0\wh\bfU_i^{0,*} \bfY_i^0 = \bfY_i^0$ and $\wh\bfU_i^0\wh\bfU_i^{0,*} \bfP(\bfY_i^0)\bfF_i(\bfY_i^0) = \bfP(\bfY_i^0)\bfF_i(\bfY_i^0)$ by definition of $\bfP(\bfY_i^0) \bfF_i(\bfY_i^0)$ and $\wh \bfU_i^0$, cf. \eqref{eq:M_i_projection}.
        Since $\bfK_i(t_0) = \bfY_i^0 \widehat \bfV_i^0$ and $\bfK_i(t_0)\widehat \bfV_i^{0,*} = \bfY_i^0$, we get
        \begin{align*}
        \| \wh\bfU_i^0\wh\bfU_i^{0,*}\bfA_i(t_{1/2})\wh\bfV_i^{0} - \bfK_i(t_{1/2}) \| \leq c_1 h^2 +c_2h\varepsilon_{r_i}^{(i)}
        \end{align*}
        and hence the desired bound for \eqref{eq:approxKa}. Moreover,
        \begin{align*}
            \Vert \bfA_i(t_{1/2}) - \wh\bfU_i^0\wh\bfU_i^{0,*}\bfK_i(t_{1/2})\wh\bfV_i^{0,*} \Vert \leq& \ \Vert \bfA_i(t_{1/2}) - \bfK_i(t_{1/2})\wh\bfV_i^{0,*} \Vert \\
            &+ \Vert (\bfI- \wh\bfU_i^0\wh\bfU_i^{0,*})\bfK_i(t_{1/2})\Vert\,.
        \end{align*}
        %Thus, it remains to bound the second norm. With a forward Euler step of $\bfK_i(t_{1/2})$ we obtain 
        %\begin{align}\label{eq:le2_approxK}
        %    \Vert (\bfI- \wh\bfU_i^0\wh\bfU_i^{0,*})\bfK_i(t_{1/2})\Vert \leq \Vert (\bfI- \wh\bfU_i^0\wh\bfU_i^{0,*}) (\bfY_i^0\wh\bfV_i^{0} + h\bfF_i(\bfY_i^0)\wh\bfV_i^{0} )\Vert + c_1h^2 = c_1h^2\,,
        %\end{align}
        %where we used that the norm term on the right-hand-side is zero by construction of $\wh\bfU_i^0$, which concludes the proof.
        
        Thus, it remains to bound the second norm. With a forward Euler step of $\bfK_i(t_{1/2})$ we obtain 
        \begin{align}\label{eq:le2_approxK}
            \Vert (\bfI- \wh\bfU_i^0\wh\bfU_i^{0,*})\bfK_i(t_{1/2})\Vert \leq\,& \Vert (\bfI- \wh\bfU_i^0\wh\bfU_i^{0,*}) (\bfY_i^0\wh\bfV_i^{0} + h\bfF_i(\bfY_i^0)\wh\bfV_i^{0} )\Vert + c_1h^2\nonumber \\
            =\,& \Vert (\bfI- \wh\bfU_i^0\wh\bfU_i^{0,*}) (\bfY_i^0 + h\bfP(\bfY_i^0)\bfF_i(\bfY_i^0) )\wh\bfV_i^{0}\Vert + c_1h^2 + h\varepsilon_{r_i}^{(i)}\nonumber\\
            =\,& c_1h^2 + h\varepsilon_{r_i}^{(i)}\,
        \end{align}
        where we have again used that $\wh\bfU_i^0$ spans $\bfY_i^0$ and $\bfP(\bfY_i^0)\bfF_i(\bfY_i^0)$ by construction, concluding the proof.
    \end{proof}
    Further, choosing a pre-augmented basis in the integrator that spans the solution at times $t_0$ and $t_{1/2}$ locally with second-order accuracy will directly lead to basis matrices $\wh \bfU_i^1$ that span $A(t_1)$ locally with second-order accuracy. We make this statement rigorous in the following Lemma:
\begin{lemma}\label{lemma:At_bound}
		Given the assumptions above, the full solution projected onto the basis of the parallel integrator fulfills		
        \begin{align*}
		\Vert A(t_1) - A(t_1) \bigtimes_{i=1}^d \wh \bfU_i^1 \wh \bfU_i^{1,*} \Vert \leq  c_1 h^3 + c_2 \bar\varepsilon_{2r}h + c_3 \bar\varepsilon_r h^2,
		\end{align*}
		where all arising constants only depend on the bound and Lipschitz constant of $F$, the bound of third derivatives of the exact solution, and an upper bound of the time stepsize.
	\end{lemma}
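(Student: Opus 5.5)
We reduce the tensor statement to $d$ matrix statements, one per mode, and derive each from a midpoint-type quadrature. Since the $\wh\bfU_i^1$ have orthonormal columns, the telescoping argument of Lemma~\ref{le:Fapprox} applies verbatim:
\begin{align*}
\Big\| A(t_1) - A(t_1)\bigtimes_{i=1}^d \wh\bfU_i^1\wh\bfU_i^{1,*}\Big\| \leq \sum_{i=1}^d \big\| (\bfI - \wh\bfU_i^1\wh\bfU_i^{1,*})\bfA_i(t_1)\big\|.
\end{align*}
So it suffices to show, for each $i$, that $\|(\bfI - \wh\bfU_i^1\wh\bfU_i^{1,*})\bfA_i(t_1)\|\le c_1h^3+c_2h\varepsilon_{2r}^{(i)}+c_3h^2\bar\varepsilon_r$. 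The $\bar\varepsilon_{2r}$ in the statement then arises as the sum over modes.

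Fix $i$ and write $\bfP^\perp := \bfI - \wh\bfU_i^1\wh\bfU_i^{1,*}$. By construction, $\bfP^\perp$ annihilates both $\wh\bfU_i^0$ and $\bfK_i(t_1)$. Hence $\bfP^\perp\bfY_i^0 = 0$, and
\begin{align*}
\bfP^\perp\bfA_i(t_1) = \bfP^\perp\big(\bfA_i(t_1) - \bfK_i(t_1)\wh\bfV_i^{0,*}\big).
\end{align*}
We expand both terms about $t_0$ with the midpoint rule:
\begin{align*}
\bfA_i(t_1) &= \bfY_i^0 + h\bfF_i(\bfA_i(t_{1/2})) + O(h^3),\\
\bfK_i(t_1)\wh\bfV_i^{0,*} &= \bfY_i^0 + h\bfF_i\big(\bfK_i(t_{1/2})\wh\bfV_i^{0,*}\big)\wh\bfV_i^0\wh\bfV_i^{0,*} + O(h^3).
\end{align*}
The $O(h^3)$ constants involve third derivatives of the exact solution and of $\bfK_i$; the latter are bounded via $B$, $L$ and the orthonormality of $\wh\bfV_i^0$. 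Subtracting, the $\bfY_i^0$ terms cancel. By Lemma~\ref{le:approxK}, eq.~\eqref{eq:approxKa}, and Lipschitz continuity, we may replace $\bfF_i(\bfK_i(t_{1/2})\wh\bfV_i^{0,*})$ by $\bfF_i(\bfA_i(t_{1/2}))$ at cost $hL(C_1h^2+h\bar\varepsilon_r+h\varepsilon_r^{(i)})$. This contributes exactly the $h^3$ and $h^2\bar\varepsilon_r$ terms. We are left with
\begin{align*}
\bfP^\perp\bfA_i(t_1) = h\,\bfP^\perp\bfF_i(\bfA_i(t_{1/2}))\big(\bfI - \wh\bfV_i^0\wh\bfV_i^{0,*}\big) + O\big(h^3 + h^2\bar\varepsilon_r\big).
\end{align*}

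The main obstacle is this remaining term: we must show it is $O(h\varepsilon_{2r}^{(i)})$ plus higher-order terms. The plan is to compare with the rank-$2r_i$ matrix $\bfZ_i := \wh\bfU_i^0\wh\bfU_i^{0,*}\bfK_i(t_{1/2})\wh\bfV_i^{0,*}$. Its range lies in $\wh\bfU_i^0 \subset \wh\bfU_i^1$ and its co-range lies in $\wh\bfV_i^0$. By \eqref{eq:approxKb}, $\bfZ_i$ is within $O(h^2+h\bar\varepsilon_r)$ of $\bfA_i(t_{1/2})$, so Lipschitz continuity lets us replace $\bfF_i(\bfA_i(t_{1/2}))$ by $\bfF_i(\bfZ_i)$ at cost $O(h^3+h^2\bar\varepsilon_r)$. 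We then split $\bfF_i(\bfZ_i) = \bfM_i(\bfZ_i) + \bfR_i(\bfZ_i)$ as in \eqref{eq:M_i_projection}, using the rank-$2r_i$ bases of $\bfZ_i$:
\begin{itemize}
\item $\bfM_i(\bfZ_i)$ is a sum of a term with range in $\wh\bfU_i^0$, which $\bfP^\perp$ kills, and a term with co-range in $\wh\bfV_i^0$, which $(\bfI-\wh\bfV_i^0\wh\bfV_i^{0,*})$ kills.
\item $\|\bfR_i(\bfZ_i)\|\le\varepsilon_{2r}^{(i)}$, by Assumption~\ref{ass:eps-normal} applied at rank $2r$.
\end{itemize}
If the ranges of $\bfZ_i$ are strictly smaller than $\wh\bfU_i^0$ and $\wh\bfV_i^0$, we use the full bases $\wh\bfU_i^0$, $\wh\bfV_i^0$ in the projector. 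Enlarging the bases only decreases the normal component, so the bound still holds. A delicate point is that $\bfZ_i$ need not lie on a trajectory where Assumption~\ref{ass:eps-normal} is formally stated. I would invoke the assumption for rank-$2r$ tensors in a neighbourhood of the solution, as is done for the matrix case in \cite{kusch2024second}.

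Collecting the terms and summing over modes gives the claimed bound. All constants depend only on $B$, $L$, third-derivative bounds and an upper bound on $h$. None involves inverses of singular values: only orthogonal projections, Lipschitz continuity and Taylor/quadrature remainders were used.
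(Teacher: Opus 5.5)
Your proof is correct and follows the paper's argument: the same telescoping reduction to $\|(\bfI-\wh\bfU_i^1\wh\bfU_i^{1,*})\bfA_i(t_1)\|$, the midpoint rule for both $\bfA_i$ and $\bfK_i$, Lemma~\ref{le:approxK} to pass to the rank-$2r_i$ matrix $\wh\bfU_i^0\wh\bfU_i^{0,*}\bfK_i(t_{1/2})\wh\bfV_i^{0,*}$, its normal-component splitting, and the cancellation from $(\bfI-\wh\bfU_i^1\wh\bfU_i^{1,*})\bfK_i(t_1)=0$. The differences are cosmetic: you subtract $\bfK_i(t_1)\wh\bfV_i^{0,*}$ at the start rather than the end, and you use \eqref{eq:approxKa} where the paper uses \eqref{eq:le2_approxK}. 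One small caveat, which the paper shares: an $O(h^3)$ midpoint remainder for $\bfK_i$ requires bounded second derivatives of $F$, not only $B$ and $L$.
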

	\begin{proof}
	   First, we start by noting 
    \begin{align*}
        \| A(t_1) - A(t_1) \bigtimes_{i=1}^d \wh \bfU_i^1 \wh \bfU_i^{1,*} \| =\,& \| A(t_1) \pm A(t_1) \times_1 \wh \bfU_1^1 \wh \bfU_1^{1,*} - A(t_1) \bigtimes_{i=1}^d \wh \bfU_i^1 \wh \bfU_i^{1,*} \| \\
        \leq\,& \| A(t_1) - A(t_1) \times_1 \wh \bfU_1^1 \wh \bfU_1^{1,*} \| \\
        \,&+ \| A(t_1) \times_1 \wh \bfU_1^1 \wh \bfU_1^{1,*} - A(t_1) \bigtimes_{i=1}^d \wh \bfU_i^1 \wh \bfU_i^{1,*} \| \\
        =\,& \| A(t_1) \times_1 (\bfI - \wh \bfU_1^1 \wh \bfU_1^{1,*}) \| + \| A(t_1) - A(t_1) \bigtimes_{i=2}^d \wh \bfU_i^1 \wh \bfU_i^{1,*} \|\,.
    \end{align*}
    Therefore, by a repeated application of the above bound, we get
    \begin{align*}
        \| A(t_1) - A(t_1) \bigtimes_{i=1}^d \wh \bfU_i^1 \wh \bfU_i^{1,*} \| \leq \sum_{k=1}^d \| A(t_1) \times_k (\bfI - \wh \bfU_k^1 \wh \bfU_k^{1,*} ) \|\,.
    \end{align*}
    Thus, defining $\bfP_{i}^\perp := \bfI - \wh \bfU_i^1 \wh \bfU_i^{1,*}$, it suffices to bound terms of the form $\| A(t_1) \times_i \bfP_{i}^\perp \| = \Vert \bfP_{i}^\perp \bfA_i(t_1) \Vert$. Now, we note that by Lemma~\ref{le:approxK} and the midpoint (MP) rule we have
    \begin{align*}
        \bfA_i(t_1) \stackrel{\mathrm{MP}}{=}\,& \bfY_i^0 + h \bfF_i(\bfA_i(t_{1/2})) + O(h^3)\\
        \stackrel{\mathrm{Le.~\ref{le:approxK}}}{=}\,& \bfY_i^0 + h \bfF_i( \wh\bfU_i^0\wh\bfU_i^{0,*}\bfK_i(t_{1/2})\wh\bfV_i^{0,*} ) + O(h^3 + h^2 \bar\varepsilon_r)\,.
    \end{align*}
    From boundedness of normal components in mode $i$ we have
    \begin{align*}
        \bfA_i(t_1) \stackrel{\ref{ass:eps-normal}}{=}\,& \bfY_i^0 + h \wh\bfU_i^0\wh\bfU_i^{0,*}\bfF_i( \wh\bfU_i^0\wh\bfU_i^{0,*}\bfK_i(t_{1/2})\wh\bfV_i^{0,*} )(\bfI - \wh\bfV_i^{0}\wh\bfV_i^{0,*})\\
        \,&+ h \bfF_i( \wh\bfU_i^0\wh\bfU_i^{0,*}\bfK_i(t_{1/2})\wh\bfV_i^{0,*} )\wh\bfV_i^{0}\wh\bfV_i^{0,*} + O(h^3 + h^2 \bar\varepsilon_r + h\varepsilon_{2r_i}^{(i)})\,.
    \end{align*}    
    Following Lemma~\ref{le:approxK} (in particular inequality \eqref{eq:le2_approxK}), we have $\wh\bfU_i^0\wh\bfU_i^{0,*}\bfK_i(t_{1/2}) = \bfK_i(t_{1/2}) + O(h^2)$. Thus, with $\bfP_{i}^\perp \wh\bfU_i^0 = 0$ and $\mu :=h^3 + h^2 \bar\varepsilon_r + h\varepsilon_{2r_i}^{(i)}$ we have
    \begin{align*}
        \bfP_{i}^\perp \bfA_i(t_1) = h \bfP_{i}^\perp \bfF_i(\bfK_i(t_{1/2})\wh\bfV_i^{0,*} )\wh\bfV_i^{0}\wh\bfV_i^{0,*} + O(\mu) \,.
    \end{align*}
    Moreover, since $\bfP_{i}^\perp \bfK_i(t_1) = 0$ by construction of the augmented basis and $\bfK_i(t_1) = \bfY_i^0\wh\bfV_i^{0} + h \bfF_i(\bfK_i(t_{1/2})\wh\bfV_i^{0,*} )\wh\bfV_i^{0} + O(h^3)$ by the midpoint rule we directly have
    \begin{align*}
        \bfP_{i}^\perp \bfA_i(t_1) =\,& h \bfP_{i}^\perp \bfF_i(\bfK_i(t_{1/2})\wh\bfV_i^{0,*} )\wh\bfV_i^{0}\wh\bfV_i^{0,*} - \bfP_{i}^\perp \bfK_i(t_1)\wh\bfV_i^{0,*} + O(\mu)= O(\mu)\,,
    \end{align*}
    which concludes the proof.
    \end{proof}
Thus, we have shown that the bases generated in the $K_i$-steps suffice to approximate the solution at the next time step. It remains to discuss how the approximation of the core tensor $\widehat C^1$ approximates the optimal coefficients $\widehat C_A := A(t_1) \bigtimes_{i=1}^d \wh\bfU_i^{1,*}$. We can distinguish these coefficients in several blocks: 
\begin{align*}
    \widehat C_A \bigtimes_{i=1}^d \wh\bfU_i^{0,*}\wh\bfU_i^1 + \sum_{j=1}^d \widehat C_A \bigtimes_{i\neq j} \wh\bfU_i^{0,*}\wh\bfU_i^1 \times_j \widetilde\bfU_j^* \wh\bfU_j^1 + \Delta\,,
\end{align*}
where $\Delta$ is the sum over all terms of the form
\begin{align*}
    \widehat C_A \bigtimes_{i\in\mathcal{I}} \wh\bfU_i^{0,*}\wh\bfU_i^1 \bigtimes_{j\in\mathcal{J}} \widetilde\bfU_j^* \wh\bfU_j^1
\end{align*}
with $\mathcal{J}\subset \{1,\cdots,d\}$ such that $|\mathcal{J}| \geq 2$ and $\mathcal{I} := \{1,\cdots,d\}\backslash \mathcal{J}$. We can now show that this core tensor is approximated sufficiently accurately. 
 \begin{lemma}\label{lemma:A_UU_Y}
     Given our previous assumptions, the distance between the projected full
dynamics and the solution of the parallel integrator is bounded by
    \begin{align*}
         \| A(t_1) \bigtimes_{j=1}^d \wh \bfU_j^1 \wh \bfU_j^{1,*} - Y_1 \| \leq C ( h^3 +  (\varepsilon_{r}+\bar\varepsilon_r) h^2 + (\bar\varepsilon_{2r}+\varepsilon_{2r})h ) ,
     \end{align*}
     where constants only depend on the Lipschitz constant and bound of $F$, a bound of third derivatives of the exact solution, and an upper bound of the time stepsize.
 \end{lemma}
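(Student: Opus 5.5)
Since both $A(t_1)\bigtimes_j \wh\bfU_j^1\wh\bfU_j^{1,*}$ and the (untruncated) integrator output $\widehat C^1\bigtimes_j \wh\bfU_j^1$ live in the span of the orthonormal bases $\wh\bfU_j^1$, I will compare core tensors. The quantity to bound is $\|\widehat C_A - \widehat C^1\|$, and I will use the block decomposition of $\widehat C_A$ introduced just before the statement. By construction, $\widehat C^1$ has exactly three kinds of blocks: the $\wh\bfU^0$-block $\bar C(t_1)$, the $d$ single-$\widetilde\bfU_i$ blocks $\widetilde C_i^1$, and zero on all blocks with $|\mathcal J|\ge 2$. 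The squared error therefore splits into three contributions, and I will bound each by $C(h^3+(\varepsilon_r+\bar\varepsilon_r)h^2+(\bar\varepsilon_{2r}+\varepsilon_{2r})h)$. Truncation is excluded here; it contributes the $\vartheta$ term in the global bound.

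\emph{The $\wh\bfU^0$-block.} The block $\widehat C_A\bigtimes_i \wh\bfU_i^{0,*}\wh\bfU_i^1 = A(t_1)\bigtimes_i\wh\bfU_i^{0,*}$ satisfies the projected ODE $\dot C = F(A)\bigtimes_i\wh\bfU_i^{0,*}$ with initial value $C^0\bigtimes_i\wh\bfU_i^{0,*}\bfU_i^0=\bar C(t_0)$. I will apply the midpoint rule to both this ODE and the C-step \eqref{C-step}. The exact increment is $hF(A(t_{1/2}))\bigtimes_i\wh\bfU_i^{0,*}+O(h^3)$. For $\bar C$, a Taylor expansion together with $F(\bar C\bigtimes\wh\bfU^0)$ shows that $\bar C(t_{1/2})\bigtimes_\ell\wh\bfU_\ell^0$ equals $Y^0+\tfrac h2 F(Y^0)\bigtimes_j \wh\bfU_j^0\wh\bfU_j^{0,*}+O(h^2)$, which by Lemma~\ref{le:Fapprox} is $A(t_{1/2})+O(h^2+h\bar\varepsilon_r)$. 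Lipschitz continuity of $F$ and the factor $h$ then give an $O(h^3+h^2\bar\varepsilon_r)$ local error.

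\emph{The single-$\widetilde\bfU_i$ blocks.} By the unfolding formula \eqref{eq:unfolding_formula}, the block is $\ten_i\bigl(\widetilde\bfU_i^*\bfA_i(t_1)\bigotimes_{j\ne i}\overline{\wh\bfU_j^0}\bigr)$. Because $\wh\bfV_i^{0,*}=\wh\bfQ_i^\top\bigotimes_{j\ne i}\wh\bfU_j^{0,\top}$, this equals $\widetilde\bfU_i^*\bfA_i(t_1)\wh\bfV_i^0\wh\bfQ_i^\top$ plus the part of $\widetilde\bfU_i^*\bfA_i(t_1)\bigotimes_{j\ne i}\overline{\wh\bfU_j^0}$ orthogonal to $\mathrm{range}(\overline{\wh\bfQ_i})$, and $\widetilde C_i^1$ reproduces exactly the first piece via $\widetilde\bfU_i^*\bfK_i(t_1)\wh\bfQ_i^\top$. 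For the first piece, I will compare $\bfK_i(t_1)$ with $\bfA_i(t_1)\wh\bfV_i^0$ by the midpoint rule. Lemma~\ref{le:approxK} gives $\bfK_i(t_{1/2})\wh\bfV_i^{0,*}=\bfA_i(t_{1/2})+O(h^2+h\bar\varepsilon_r+h\varepsilon_r^{(i)})$, and the same lemma applied to $\wh\bfU_i^0\wh\bfU_i^{0,*}\bfK_i(t_{1/2})$ controls the initial data. The difference is then $O(h^3+h^2(\bar\varepsilon_r+\varepsilon_r))$.

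The second piece is the \textbf{main obstacle}: the component of $\widetilde\bfU_i^*\bfA_i(t_1)$ in directions $\bigotimes_{j\ne i}\wh\bfU_j^0$ that are not in $\mathrm{span}\,\wh\bfQ_i$. Since $\widetilde\bfU_i\perp\wh\bfU_i^0$ kills $\bfY_i^0$, this component equals $h\,\widetilde\bfU_i^*\bfF_i(\bfA_i(t_{1/2}))$ tested in those directions, up to $O(h^3)$. I intend to show it is $O(h\varepsilon_{2r}+h^2\bar\varepsilon_r+h^3)$. For this I will use the normal-component decomposition A\ref{ass:eps-normal} at the rank-$(2r)$ point $\wh\bfU_i^0\wh\bfU_i^{0,*}\bfK_i(t_{1/2})\wh\bfV_i^{0,*}$, exactly as in the proof of Lemma~\ref{lemma:At_bound}: modulo $\varepsilon_{2r}$, $\bfF_i$ is either in $\mathrm{range}\,\wh\bfU_i^0$, and hence annihilated by $\widetilde\bfU_i^*$, or in $\mathrm{span}\,\wh\bfV_i^0$, and hence inside $\mathrm{span}\,\wh\bfQ_i$ after testing. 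This argument is the one to verify carefully.

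\emph{Blocks with $|\mathcal J|\ge2$.} Here I will argue that $\widetilde\bfU_j^*$ and $\widetilde\bfU_k^*$ each annihilate $\wh\bfU^0$-spanned parts, so only the doubly normal component of $h F$ survives. Writing $\widetilde\bfU_j^*\wh\bfU_j^1$ as $\widetilde\bfU_j^*\bfP_j$ with $\bfP_j\perp\wh\bfU_j^0$, and using the tangent-projector form \eqref{eq:projectorP} at the $2r$-rank point, every tangent term contains at most one factor $(\bfI-\wh\bfU_k^0\wh\bfU_k^{0,*})$. Hence these blocks are $O(h\varepsilon_{2r}+h^2\bar\varepsilon_r+h^3)$, with the $Y^0$ and $O(h^2)$ corrections handled as above. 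Summing the three contributions yields the claim.
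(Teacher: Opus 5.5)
Your proposal is correct and follows the paper's proof. Both compare $\widehat C_A$ with $\widehat C^1$ block by block: Lemma~\ref{le:Fapprox} handles the $\wh\bfU^0$-block; Lemma~\ref{le:approxK} plus the rank-$2r$ normal-component splitting handles the single-$\widetilde\bfU_i$ blocks; and for $|\mathcal J|\ge 2$, every term of the tangent projector at a point with bases $\wh\bfU_\ell^0$ is annihilated by two factors $\widetilde\bfU_j^*$, $\widetilde\bfU_k^*$. The differences are minor: you use the midpoint rule instead of the paper's double-integral estimate for $\bar C(t_1)$, and your explicit split of the single-$\widetilde\bfU_i$ block along $\mathrm{range}(\overline{\wh\bfQ_i})$ (where the initial values agree exactly, $\bfK_i(t_0)=\bfY_i^0\wh\bfV_i^0$, so no lemma is needed there) makes visible the $h\varepsilon_{2r}$ contribution that the paper's chain of equalities leaves implicit.
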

 \begin{proof}
    We prove this by going over all blocks of the core tensors. We start by expanding the block corresponding to $\wh \bfU_j^0$. 
    \begin{align*}
        \| A(t_1) \bigtimes_{j=1}^d \wh \bfU_j^{0,*} - \bar C(t_1) \| &\leq \int_{t_0}^{t_1} \| \left( F(t,A(t))  - F(t,\bar C(t) \bigtimes_{j=1}^d \wh \bfU_j^{0}) \right) \bigtimes_{j=1}^d \wh \bfU_j^{0,*} \| \,dt \\
        &\stackrel{\ref{ass:bounded}}{\leq} L \int_{t_0}^{t_1} \| A(t) - \bar C(t) \bigtimes_{j=1}^d \wh \bfU_j^{0} \| \,dt \\
        &\leq L \int_{t_0}^{t_1} \int_{t_0}^{t} \| F(s,A(s)) - F(s,\bar C(s) \bigtimes_{j=1}^d \wh \bfU_j^{0}) \bigtimes_{j=1}^d \wh \bfU_j^{0} \wh \bfU_j^{0,*} \| \,ds \ dt \\
        &\leq c_1 h^3 + Lh^2 \| F(Y^0) - F(Y^0) \bigtimes_{j=1}^d \wh \bfU_j^{0} \wh \bfU_j^{0,*}  \| \stackrel{\mathrm{Le.}~\ref{le:Fapprox}}{\leq} c_1 h^3 + Lh^2\bar\varepsilon_r\,,
    \end{align*}
    where $c_1$ results from a Taylor expansion of $A(s)$ and $\bar C(s)$ around $t_0$ and \ref{ass:bounded}. Next, we bound the terms
    \begin{align*}
        \Vert A(t_1) \bigtimes_{j\neq i} \wh\bfU_j^{0,*} \times_i \widetilde\bfU_i^* - \wt C_i^1\Vert =\,& \Vert \widetilde\bfU_i^*\bfA_i(t_1) \bigotimes_{j\neq i} \wh\bfU_j^{0}  - \widetilde\bfU_i^{*}\bfK_i(t_1)\wh\bfQ_i^{\top}\Vert\\
        \leq\,& h\Vert \widetilde\bfU_i^*\bfF_i(\bfA_i(t_{1/2})) \bigotimes_{j\neq i} \wh\bfU_j^{0}  - \widetilde\bfU_i^*\bfF_i(\bfK_i(t_{1/2})\wh\bfV_i^{0,*})\wh\bfV_i^{0}\wh\bfQ_i^{\top}\Vert + ch^3\,,
    \end{align*}
    where we have used the midpoint rule in the last inequality. Recalling that according to Lemma~\ref{le:approxK} we have $\Vert \wh\bfU_i^0\wh\bfU_i^{0,*}\bfK_i(t_{1/2})\wh\bfV_i^{0,*} - \bfA_i(t_{1/2}) \Vert \leq C_1 h^2 + 2 h\bar \varepsilon_r$, and thus
    \begin{align*}
        \widetilde\bfU_i^*\bfF_i(\bfA_i(t_{1/2})) \bigotimes_{j\neq i} \wh\bfU_j^{0} =\,& \widetilde\bfU_i^*\bfM_i(\wh\bfU_i^0\wh\bfU_i^{0,*}\bfK_i(t_{1/2})\wh\bfV_i^{0,*}) \bigotimes_{j\neq i} \wh\bfU_j^{0} + O(h^2 + h\bar \varepsilon_r)\\
        =\,& \widetilde\bfU_i^*\bfF_i(\wh\bfU_i^0\wh\bfU_i^{0,*}\bfK_i(t_{1/2})\wh\bfV_i^{0,*})\wh\bfV_i^{0}\wh\bfV_i^{0,*} \bigotimes_{j\neq i} \wh\bfU_j^{0} + O(h^2 + h\bar \varepsilon_r)\\
        =\,& \widetilde\bfU_i^*\bfF_i(\bfK_i(t_{1/2})\wh\bfV_i^{0,*})\wh\bfV_i^{0}\wh\bfQ_i^{\top} + O(h^2 + h\bar \varepsilon_r)
    \end{align*}
    where we have used $\Vert (\bfI-\wh\bfU_i^0\wh\bfU_i^{0,*})\bfK_i(t_{1/2})\Vert \leq ch^2$ according to \eqref{eq:le2_approxK} and $\wh\bfV_i^{0,*} = \wh\bfQ_i^{\top}\bigotimes_{j\neq i} \wh\bfU_j^{0,*}$ in the last equality. We therefore immediately have 
    \begin{align*}
        \Vert A(t_1) \bigtimes_{j\neq i} \wh\bfU_j^{0,*} \times_i \widetilde\bfU_i^* - \wt C_i^1\Vert \leq c(h^3 + h^2 \bar\varepsilon_r)\,.
    \end{align*}
    Lastly, for any sets of indices $\mathcal{J}\subset \{1,\cdots,d\}$ with $|\mathcal{J}| \geq 2$ and $\mathcal{I} := \{1,\cdots,d\}\backslash \mathcal{J}$, we have with $Y_{1/2} := \bar C(t_{1/2})\bigtimes_{\ell} \wh\bfU_{\ell}^0$, where $A(t_{1/2}) = Y_{1/2} + O(h\varepsilon_r + h^2)$
        \begin{align*}
    	\,& \Vert A(t_1) \bigtimes_{i\in \mathcal{I}} \wh\bfU_i^{0}\wh \bfU_i^{0,*}\bigtimes_{j\in \mathcal{J}} \widetilde\bfU_j\widetilde\bfU_j^{*}\Vert \\
     \leq\,& h\Vert F(t,Y_{1/2}) \bigtimes_{i\in \mathcal{I}} \wh\bfU_i^{0}\widehat \bfU_i^{0,*}\bigtimes_{j\in \mathcal{J}} \widetilde\bfU_j\widetilde\bfU_j^{*}\Vert + c_1(h^2\varepsilon_r + h^3)\\
     \leq\,& h\Vert M(t,Y_{1/2}) \bigtimes_{i\in \mathcal{I}} \wh\bfU_i^{0}\widehat \bfU_i^{0,*}\bigtimes_{j\in \mathcal{J}} \widetilde\bfU_j\widetilde\bfU_j^{*}\Vert + c_1(h^2\varepsilon_r + h^3) + c_2h \varepsilon_{2r}\,.
    \end{align*}
    By definition of $M$, we have for any $j,\ell \in\mathcal{I}$ that $M(Y_{1/2}) \times_j \widetilde\bfU_j \times_{\ell} \widetilde\bfU_{\ell} =0$.
     
 \end{proof}
Finally, we are able to prove a robust error bound \ref{thm:global_error_bound_Tucker} for our proposed tensor integrator. 

 \begin{proof}[of Theorem~\ref{thm:global_error_bound_Tucker}]
     The proof is an immediate consequence of Lemma~\ref{lemma:At_bound} and Lemma~\ref{lemma:A_UU_Y}. Therefore, we have for the local error 
     \begin{align*}
         ||A(t_1) - Y^1|| &= ||A(t_1) \pm A(t_1) \bigtimes_{i=1}^d \wh \bfU_i^1 \wh \bfU_i^{1,*} - Y_1|| \\
         &\leq ||A(t_1) - A(t_1) \bigtimes_{i=1}^d \wh \bfU_i^1 \wh \bfU_i^{1,*}|| + ||A(t_1) \bigtimes_{i=1}^d \wh \bfU_i^1 \wh \bfU_i^{1,*} - Y_1|| \\
         &\leq c_1 h^3 + c_2 \bar\varepsilon_{2r}h + c_3 \bar\varepsilon_r h^2 + C ( h^3 +  (\varepsilon_{r}+\bar\varepsilon_r) h^2 + (\bar\varepsilon_{2r}+\varepsilon_{2r})h ) \\
         &\leq c_1 h^3 + c_2 h^2 \varepsilon_r^{\mathrm{max}} + c_3 h\varepsilon_{2r}^{\mathrm{max}},
     \end{align*}
     where in last inequality the $c_i$ coefficients were newly set. The dependence of the error on $\delta$ is obtained in the standard way by comparing $Y^1$ with the solution of the perturbed initial value problem $\dot{\wt A} = F(t,\wt A)$, $\wt A(0) = Y^0$, and applying the triangle inequality together with Gronwall's inequality, which yields the additional term $e^{Lt^*}\delta = c_0 \delta$. An additional truncation with tolerance $\vartheta$ gives
     \begin{align*}
         ||A(t_1) - Y^1|| \leq c_0 \delta +  c_1 h^3 + c_2 h^2 \varepsilon_r^{\mathrm{max}} + c_3 h\varepsilon_{2r}^{\mathrm{max}} + c_4\vartheta.
     \end{align*}
     The stated result, a global error bound, is then obtained by a standard Lady-Windermere's argument, see e.g \cite[Sections I.7 and II.3]{HairerNorsettWanner:ODE_BOOK1}.
 \end{proof}

    \begin{remark}
        Our analysis, in particular Lemma~\ref{lemma:A_UU_Y}, shows that we can improve the first-order parallel BUG integrator of \cite{ceruti2024parallel}. This integrator updates blocks in the coefficient tensor as
        \begin{align*}
            \widehat C^1 \bigtimes_{j\neq i} (\bfI_{r_j}, \bm{0}_{r_j}) \times_i ( \bm{0}_{r_i}, \bfI_{r_i}) = hF(Y_0)\bigtimes_{j\neq i} \bfU_{j}^{0,*} \times_i \widetilde\bfU_i^{*} \in\C^{r_1\times\cdots\times r_d}\,.
        \end{align*}
        A disadvantage of this construction is that the update is based on an explicit Euler method, meaning that poor accuracy and stability can affect the integrator, even if implicit and high-order integration methods are used to update $\bfK_i(t)$ and $\bar C(t)$. Moreover, it requires additional operations such as evaluations of the right-hand side. Our analysis shows that this construction can be replaced by 
        \begin{align*}
            \widehat C^1 \bigtimes_{j\neq i} (\bfI_{r_j}, \bm{0}_{r_j}) \times_i ( \bm{0}_{r_i}, \bfI_{r_i}) = \text{Ten}_i(\widetilde\bfU_i^{*}\bfK_i(t_1)\bfQ_i^{\top})\,
        \end{align*}
        thus decoupling the update from an explicit Euler method and reducing computational costs. Note that we are using the notation from \cite{ceruti2024parallel} except for $\bfQ_i$, which is used as defined in Section~\ref{sec:parallelTucker} in the $\bfK_i$-step.
    \end{remark}

% \textcolor{blue}{
% Mögliche Dinge die man noch machen kann:
% \begin{itemize}
%     \item Norm- und Energieerhaltung für Schrödingersysteme beweisen. Solle ähnlich gehen wie im augmented BUG Fall.
%     \item Masse- und Energieerhaltung für kinetische Gleichungen beweisen. \textcolor{red}{Ich glaube das sollte sehr einfach gehen (man muesste den $K$-Schritt etwas anpassen). Das waere dann kinetik mit Unsicherheiten $\xi_i$ ($f(t,x,v,\xi_1,\dots,\xi_d)$), ohne Unsicherheiten hat man dann wieder eine Matrix ODE.}
%     \item Einen Remark schreiben, wie man den first order parallelen BUG stabiler und genauer machen kann, indem man die $\tilde C_i$ ersetzt durch $\text{Ten}_i(\widetilde\bfU_i^{*}\bfK_i(t_1)\wh\bfQ_i^{\top})$.
% \end{itemize}
% }

\section{Second-order augmented BUG for Tucker tensors}\label{sec:2nd_order_augmented_BUG}
The considerations and the proof of the error bound for the second-order parallel BUG indicate another integrator that shares the same robust second-order error bound. The integrator will be in the spirit of the (first-order) augmented BUG integrator \cite{CeKL22}. In this section, we formulate the second-order augmented BUG integrator and prove that the same robust second order error bound holds. We note that the proposed integrator differs from a Tucker extension of the midpoint BUG integrator introduced in \cite{ceruti2024robust}. Nevertheless, both methods are conceptually similar in that they rely on information of the basis at the midpoint.
\subsection{Formulation of the algorithm}
Given the rank $r = (r_1, \cdots, r_d)$ numerical solution in Tucker format $Y^0 = C^0 \bigtimes_{i=1}^d \bfU_i^0$ at time $t_0$, one step of the second order augmented Tucker integrator from $t_0$ to $t_1$ reads:
\begin{enumerate}
    \item \textbf{Pre-Augment}: Construct the augmented basis $\wh \bfU_i^0$, the matrices $\bfQ_i^0$, $\bfS_i^{0}$, $\wh \bfQ_i^0$ and the tensor $\wh C_F^0$ as in Sec.~\ref{subsec:2ndorder_parallel_Tucker_formulation} for $i=1,\dots,d$. 
    \item \textbf{$\bfK_i$-step}: Set $\wh\bfV_i^{0,*} = \wh\bfQ_i^{\top} \bigotimes_{j\neq i}^d \wh\bfU_j^{0,\top} \in\C^{2r_i \times n_{\lnot i}}$. Integrate the $n_i \times 2r_i$ matrix differential equations
		\begin{align*} 
		\dot{\textbf{K}}_i(t) = \bfF_i(t, \textbf{K}_i(t) \wh\bfV_i^{0,*}) \wh\bfV_i^0, \qquad \textbf{K}_i(t_0) = \wh\bfU_i^0\wh\bfU_i^{0,*}\bfU_i^0 \bfS_i^0 \bfQ_i^{\top}\overline{\widehat\bfQ_i}\,.
		\end{align*}
		Construct $\wh \bfU_i^1 = (\wh \bfU_i^0, \widetilde \bfU_i) \in \C^{n_i\times 4r_i}$ as an orthonormal basis of the range of the $n_i\times 4r_i$ matrix $(\wh\bfU_i^0, \bfK_i(t_1))$ (e.g.~by QR decomposition).
        \item \textbf{C-step}: From $t=t_0$ to $t_1$ integrate the $4r_1 \times \cdots \times 4r_d$ tensor differential equation
		\begin{align*} 
		\dot{\wh C}(t) = F(t, \wh C(t) \bigtimes_{i=1}^d\wh\bfU_{i}^{1}) \bigtimes_{i=1}^d\wh\bfU_i^{1, *}, 
		\qquad \wh  C(t_0) = C^0 \bigtimes_{i=1}^d \wh\bfU_i^{1,*}\bfU_i^{0}.
		\end{align*}
        \item \textbf{Truncate} according to a tolerance parameter $\vartheta$.
\end{enumerate}
Since step~$3$ is performed in the full $4r_1 \times \dots \times 4r_d$, no augmentation step is needed. However, this step becomes significantly more expensive than the $2r_1 \times \dots \times 2r_d$ tensor differential equation from the second-order parallel version in Sec.~\ref{subsec:2ndorder_parallel_Tucker_formulation}. 

\subsection{Robust error bound}
\begin{theorem}\label{thm:global_error_bound_Tucker_2nd_augmented}
     Under the assumptions \ref{ass:bounded}-\ref{ass:init}, the error of the numerical approximation $Y_n$ at time $t_n = t_0 + nh$, obtained with n time steps of the second-order augmented Tucker tensor integrator with step size $h>0$ and rank-truncation tolerance $\vartheta$, is bounded by
     \begin{align}
         ||Y_n - A(t_n)|| \leq c_0\delta +c_1 h^2 + c_2 h \varepsilon_r^{\mathrm{max}} + c_3 \varepsilon_{2r}^{\mathrm{max}} + c_4 n\vartheta,
     \end{align}
     where all appearing constants $c_0,\dots,c_4$ are independent of small singular values of matricizations of the connection tensors.
 \end{theorem}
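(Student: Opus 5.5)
The argument follows the proof of Theorem~\ref{thm:global_error_bound_Tucker}: I will derive a local error bound on $[t_0,t_1]$ with $A(t_0)=Y^0$, then pass to the global bound by the Gronwall argument for $\delta$ and Lady Windermere's fan. The key observation is that the pre-augmentation and the $\bfK_i$-steps of the augmented integrator coincide with those of the parallel integrator. Hence the basis matrices $\wh\bfU_i^1$ are identical. Lemmas~\ref{le:Fapprox}, \ref{le:halfstepbasis}, \ref{le:approxK} and in particular Lemma~\ref{lemma:At_bound} therefore apply verbatim and give
\begin{align*}
\Vert A(t_1) - A(t_1)\bigtimes_{i=1}^d \wh\bfU_i^1\wh\bfU_i^{1,*}\Vert \le c_1h^3 + c_2\bar\varepsilon_{2r}h + c_3\bar\varepsilon_r h^2 .
\end{align*}
With the triangle inequality, it remains to bound $\Vert A(t_1)\bigtimes_i \wh\bfU_i^1\wh\bfU_i^{1,*} - \wh C(t_1)\bigtimes_i\wh\bfU_i^1\Vert$. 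This replaces Lemma~\ref{lemma:A_UU_Y}.

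For this term I will use the standard Galerkin argument of the augmented BUG analysis \cite{CeKL22} rather than the blockwise argument. Set $\wh Y(t) := \wh C(t)\bigtimes_i\wh\bfU_i^1$ and let $\wh P$ denote the orthogonal projector $Z\mapsto Z\bigtimes_i\wh\bfU_i^1\wh\bfU_i^{1,*}$. Then $\wh Y$ solves $\dot{\wh Y} = \wh P F(t,\wh Y)$. Since $\wh\bfU_i^1$ contains $\bfU_i^0$, we have $\wh Y(t_0)=Y^0$. Likewise $\wh P A$ solves $\frac{d}{dt}\wh P A = \wh P F(t,A)$. Subtracting and using the Lipschitz bound \ref{ass:bounded},
\begin{align*}
\Vert \wh P A(t) - \wh Y(t)\Vert \le L\int_{t_0}^t \Vert A(s)-\wh P A(s)\Vert\,ds + L\int_{t_0}^t\Vert \wh PA(s)-\wh Y(s)\Vert\,ds .
\end{align*}
Gronwall then reduces everything to $h\cdot\max_{s\in[t_0,t_1]}\Vert (I-\wh P)A(s)\Vert$. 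A crude bound of order $h^2$ here would yield only a first-order local error $O(h^3)$ after multiplication by $h$. That is still enough, but only provided $\Vert (I-\wh P)A(s)\Vert = O(h^2 + \bar\varepsilon_{2r}+h\bar\varepsilon_r)$ holds uniformly on $[t_0,t_1]$, and not just at $t_1$.

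The main obstacle is establishing this uniform-in-time projection bound. My first attempt is a Taylor/quadratic interpolation argument. $A(s)$ is approximated to $O(h^3)$ by the quadratic polynomial through $A(t_0)$, $A(t_{1/2})$, $A(t_1)$. By Lemma~\ref{le:approxK} and the construction of $\wh\bfU_i^0$, these three values are captured by $\wh P$ up to $O(h^2+h\bar\varepsilon_r)$, $O(h^3+h\bar\varepsilon_{2r}+h^2\bar\varepsilon_r)$, and exactly, respectively. Since $\wh P$ is linear, the projection error of the interpolant is a combination of these with bounded coefficients, which gives $O(h^2+h\bar\varepsilon_r)$ uniformly.

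If this is too weak to make the Gronwall term $O(h^3)$, I will instead replicate the structure of Lemma~\ref{lemma:A_UU_Y}. I would rewrite $\wh P A(t_1)-\wh Y(t_1)$ via the midpoint rule and compare $\wh P F(A(t_{1/2}))$ with $\wh PF(\wh Y(t_{1/2}))$. This shows that $\wh Y(t_{1/2})$ approximates $A(t_{1/2})$ to $O(h^2+h\bar\varepsilon_r)$, using a first-order Galerkin estimate on $[t_0,t_{1/2}]$ where $\wh\bfU^0_i$ already spans $F(Y^0)$ up to $\bar\varepsilon_r$ by Lemma~\ref{le:Fapprox}. Multiplying by $h$ then gives $O(h^3+h^2\bar\varepsilon_r)$, and normal components at rank $2r$ add $h\varepsilon_{2r}$.

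Finally, adding the truncation error $\vartheta$ and combining as in the proof of Theorem~\ref{thm:global_error_bound_Tucker} gives the local bound $c_1h^3+c_2h^2\varepsilon_r^{\max}+c_3h\varepsilon_{2r}^{\max}+c_4\vartheta$. None of these constants involve inverses of singular values, since only orthogonal projections, Lipschitz bounds and Taylor remainders are used. The global estimate then follows as before.
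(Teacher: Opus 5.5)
Your proposal is correct and is essentially the paper's proof. Lemma~\ref{lemma:At_bound} carries over because the bases $\wh\bfU_i^1$ coincide with those of the parallel integrator; a uniform-in-time bound on $(I-\wh P)A(t)$ is derived; and a Gronwall comparison of the Galerkin core $\wh C(t)$ with $A(t)\bigtimes_{i=1}^d\wh\bfU_i^{1,*}$ (equivalently, of $\wh Y$ with $\wh P A$) closes the local estimate. The paper gets the uniform bound more cheaply, by a first-order Taylor expansion using $(I-\wh P)A(t_0)=0$ and the bound at $t_1$, so your quadratic interpolation already suffices and the fallback is unnecessary; note only that your node errors are permuted ($A(t_0)$ is captured exactly, $A(t_{1/2})$ up to $O(h^2+h\bar\varepsilon_r)$, $A(t_1)$ up to $O(h^3+h\bar\varepsilon_{2r}+h^2\bar\varepsilon_r)$), so the uniform bound also carries a harmless $h\bar\varepsilon_{2r}$ term.
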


\begin{proof}
First, the dependence of the error bound on the initial error $\delta$ is proven as in the proof of Theorem~\ref{thm:global_error_bound_Tucker}. Using Lemma~\ref{lemma:At_bound}, the remaining proof is a trivial reformulation of \cite[Theorem~2]{ceruti2024robust} and \cite[Section~4]{hauck2026highorderrobustbasisupdate} for tensor--valued solutions. We subdivide the proof into two parts (a) and (b). Our goal is to bound the local error assuming $\bfA(t_0) = \bfY_0$, resulting in a global error bound with Lady Windermere's fan which propagates errors through the exact flow. \\
(a) We start by bounding the residual
$$
R(t) := A(t) - A(t)\bigtimes_{i=1}^d \wh \bfU_i^1 \wh \bfU_i^{1,*}
$$
for $t\in[t_0,t_1]$. By Lemma~\ref{lemma:At_bound}, we have with $\mu=h(h^2+\bar\varepsilon_{2r} + \bar\varepsilon_r h)$ that
\begin{align*}
    R(t_1) = A(t_1) \bigtimes_{i=1}^d \wh \bfU_i^1 \wh \bfU_i^{1,*} - A(t_1) =  O(\mu)\,.
\end{align*}
Moreover, $R(t_0) = A(t_0) - A(t_0) \bigtimes_{i=1}^d \wh \bfU_i^1 \wh \bfU_i^{1,*} = 0$.
%Thus, using $A(t_0) - A(t_0) \bigtimes_{i=1}^m \wh \bfU_i^1 \wh \bfU_i^{1,*} = 0$, we have
%\begin{align*}
%    A(t_1) - A(t_0) - (A(t_1)- A(t_0)) \bigtimes_{i=1}^m \wh \bfU_i^1 \wh \bfU_i^{1,*} = O(\mu)\,.
%\end{align*}
%(b) We define the residual term
%$$
%R(t) := A(t) - A(t)\bigtimes_{i=1}^m \wh \bfU_i^1 \wh \bfU_i^{1,*},
%$$
%which by part (a) fulfills
%$$
%R(t_1)-R(t_0) = O(\mu).
%$$
%Moreover, as discussed in part (a), we have
%$$
%R(t_0) = 0  \quad\hbox{ and }\quad R(t_1)=O(\mu).
%$$
To bound $R$ at a general time $t$, we note that with bounded second derivatives we have for $ t_0\le t \le t_1$
\begin{align*}
    \Vert R(t)\Vert =\,& \Vert R(t_0) + (t-t_0)R'(t_0)\Vert + O(h^2)\\
    \leq\,& \Vert R(t_1)\Vert + O(h^2) = O(h^2 +\mu)\,.
\end{align*}
%$p$-th derivatives of $R$, we have for $p=2$
%\begin{align*}
%    \Vert R(t)\Vert =\,& \Vert R(t_0) + \sum_{k=1}^{p-1}R^{(k)}(t_0)\frac{(t-t_0)^k}{k!}\Vert + O(h^p)\\
%    \leq\,& \Vert R(t_0) + \sum_{k=1}^{p-1}R^{(k)}(t_0)\frac{h^k}{k!}\Vert + O(h^p)\\
%    \leq\,& \Vert R(t_1)\Vert+ O(h^p) + O(h^p) = O(h^p +\mu)\,.
%\end{align*}
%Therefore, we obtain the bound
%$$
%R(t) = O(h^2+\mu), \qquad t_0\le t \le t_1.
%$$
(b) We first write 
\begin{align*}
    \wh Y_1 - A(t_1) =\,& \wh Y_1 - A(t_1)\bigtimes_{i=1}^d \wh \bfU_i^1 \wh \bfU_i^{1,*} + A(t_1)\bigtimes_{i=1}^d \wh \bfU_i^1 \wh \bfU_i^{1,*} - A(t_1)\\
=\,&  (\wh C(t_1) - A(t_1)\bigtimes_{i=1}^d \wh \bfU_i^{1,*}) \bigtimes_{i=1}^d \wh \bfU_i^1 + R(t_1) \,.
\end{align*}
By part (a), $R(t_1) = O(\mu)$. Thus, it remains to show that
\begin{equation}\label{eq:S-tilde}
\wh C(t_1) - A(t_1)\bigtimes_{i=1}^d \wh \bfU_i^{1,*}=O(\mu).
\end{equation}
For this, let $t_0\le t \le t_1$, and define $ \wt C(t) := A(t)\bigtimes_{i=1}^d \wh \bfU_i^{1,*}$.
		We write
		\begin{equation*}
		\begin{aligned}
		A(t) 
		&=   A(t) - A(t)\bigtimes_{i=1}^d \wh \bfU_i^1 \wh \bfU_i^{1,*}
         + A(t)\bigtimes_{i=1}^d \wh \bfU_i^1 \wh \bfU_i^{1,*}
		=  R(t) + \wt C(t)\bigtimes_{i=1}^d \wh \bfU_i^1
		\end{aligned}
		\end{equation*}
        and
        \begin{equation*}		\begin{aligned}
		F(t, A(t)) 
		&= F(t, \wt C(t)\bigtimes_{i=1}^d \wh \bfU_i^1 + R(t) ) 
         = F(t, \wt C(t)\bigtimes_{i=1}^d \wh \bfU_i^1) + D(t)
		\end{aligned}
		\end{equation*}
		where the defect $D$ is defined as
		$
		D(t) := F(t, \wt C(t)\bigtimes_{i=1}^d \wh \bfU_i^1 + R(t)) - F(t, \wt C(t)\bigtimes_{i=1}^d \wh \bfU_i^1).
		$
		Since $F$ is Lipschitz continuous with constant $L$ and $R(t)$ has a sufficiently small bound as shown in part (a), the defect fulfills the bound
		$$
		\|  D(t) \| \le L \| R(t) \| =O(h^2+\mu).
		$$
		Then, we compare
		\begin{equation*}
		\begin{aligned}
		&\dt{\wh C}(t) = F(t,\wh C(t)\bigtimes_{i=1}^d \wh \bfU_i^1)\bigtimes_{i=1}^d \wh \bfU_i^{1,*}\,, 
		\qquad
		&\wh C(t_0) = Y_0 \bigtimes_{i=1}^d \wh \bfU_i^{1,*},
        \\
        &\dt{\wt C}(t) = F(t, \wt C(t)\bigtimes_{i=1}^d \wh \bfU_i^1)\bigtimes_{i=1}^d \wh \bfU_i^{1,*} + D(t)\bigtimes_{i=1}^d \wh \bfU_i^{1,*}, 
		\qquad
		&\wt C(t_0) = Y_0 \bigtimes_{i=1}^d \wh \bfU_i^{1,*}.
		\end{aligned}
		\end{equation*}
		Lastly, we obtain with the Gronwall inequality
		$$
		\|  \wh C(t_1) - \wt C(t_1) \| 
		\leq \int_{t_0}^{t_1} e^{L(t_1-s)} \, \| D(s) \| \, ds
		=O(h(h^2+\mu))= O(\mu).
		$$
		Thus, \eqref{eq:S-tilde} holds, which yields
        \begin{align*}
            A(t_1) - Y_1 = A(t_1) - \wh Y_1 + \wh Y_1 - Y_1 = O(\mu + \vartheta)\,,
        \end{align*}
        concluding the proof. 
\end{proof}

\begin{remark}
    We note that this proof can be trivially extended to arbitrary order if $A(t_1) \bigtimes_{i=1}^d \wh \bfU_i^1 \wh \bfU_i^{1,*} = A(t_1) + O(\mu_p)$ for $\mu_p := h^{p+1} + h\bar\varepsilon_r$. Here, one needs to replace $\mu$ by $\mu_p$ in the proof and use order $p$ derivatives of $R(t_0)$. This provides a strategy for constructing higher-order augmented BUG integrators. 
\end{remark}

\subsection{Properties}\label{subsec:properties_2nd_augment}
% \ds{@Jonas: Würdest du das alles aufnehmen ins paper? Also nur die Theoreme staten und auf die rank-adaptive Arbeit verweisen für die Beweise? Und wenn ja, nur hier und dann im TTN Fall nur schreiben ''gilt immer noch''?}\jknote{Ich wuerde es sehr kurz halten und einfach sagen, dass der Beweis direkt vom aug. BUG Paper folgt.}

% Der second-order augmented BUG sollte Norm und Energie erhaltend sein, der parallele nicht. Der Beweis für Norm und Energieerhaltung sollte der gleiche sein wie im augmented BUG paper, ebenso auch der Beweis für die Gradient system Eigenschaft sollte gelten. -> Lemma 1 aus rank-adaptive TTN paper muss gelten (und tut es auch).

The first-order augmented BUG integrator from \cite{CeKL22} shares desirable properties such as norm and energy preservation for Schrödinger equations and energy diminishing for gradient systems, all up to truncation. The proofs of these properties rely mainly on the fact that we have an orthogonal basis and perform a Galerkin step for the update of the connecting tensor. The proposed second-order augmented BUG integrator relies on the same facts with the only difference that the (orthogonal) basis is larger compared to the one of the first-order augmented BUG. Hence, the proofs for norm and energy preservation and energy diminishing for gradient systems can be directly applied to the proposed second-order augmented BUG integrator. We therefore only state the properties and refer to \cite{CeKL22} (for the matrix case) and to \cite{CLS23} (for the tensor case) for the proofs.
\paragraph{Norm conservation}
\begin{theorem}
    Suppose that the function $F$ satisfies 
    \begin{align*}
        \text{Re} \langle Y,F(Y)\rangle = 0 \quad \text{for all} \quad Y \in \C^{n_1 \times \dots \times n_d},
    \end{align*}
    where $\langle \cdot,\cdot \rangle$ denotes the Euclidean inner product. Then a step of the second-order augmented BUG integrator preserves the norm: for every step size $h>0$,
    \begin{align*}
        \|\wh Y^1\| = \|Y^0\|.
    \end{align*}
    This implies a near-conservation of norm up to a multiple of the truncation tolerance $\vartheta$:
    \begin{align*}
        \big\vert \|Y^1\| - \|Y^0\| \big\vert  \leq c \vartheta,
    \end{align*}
    with $c = \|C^0\|(d-1) + 1$.
\end{theorem}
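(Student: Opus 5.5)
The plan has two parts: show that the Galerkin C-step preserves the norm exactly, then estimate how much the truncation can change it.

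\textbf{Exact norm conservation.} Let $\wh Y(t) := \wh C(t)\bigtimes_{i=1}^d \wh\bfU_i^1$, so that $\wh Y^1 = \wh Y(t_1)$. Because the bases $\wh\bfU_i^1$ have orthonormal columns, multiplying the core by them is an isometry, and hence $\|\wh Y(t)\| = \|\wh C(t)\|$. Differentiating the squared norm of the core and using the C-step equation gives
\begin{align*}
\frac{d}{dt}\|\wh C(t)\|^2 = 2\,\mathrm{Re}\,\big\langle \wh C(t), F(t,\wh Y(t))\bigtimes_{i=1}^d \wh\bfU_i^{1,*}\big\rangle = 2\,\mathrm{Re}\,\big\langle \wh Y(t), F(t,\wh Y(t))\big\rangle = 0 .
\end{align*}
The middle equality is the adjoint relation for the mode products, and the last one is the hypothesis $\mathrm{Re}\langle Y,F(Y)\rangle=0$.

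It remains to check the initial value, $\|\wh C(t_0)\| = \|Y^0\|$. By construction, $\wh\bfU_i^1=[\wh\bfU_i^0,\wt\bfU_i]$ and the first $r_i$ columns of $\wh\bfU_i^0$ are $\bfU_i^0$ (see the remark following the algorithm). So the range of $\wh\bfU_i^1$ contains the range of $\bfU_i^0$, and $\wh\bfU_i^1\wh\bfU_i^{1,*}\bfU_i^0=\bfU_i^0$. Therefore $\wh C(t_0)\bigtimes_{i=1}^d\wh\bfU_i^1 = C^0\bigtimes_{i=1}^d \wh\bfU_i^1\wh\bfU_i^{1,*}\bfU_i^0 = Y^0$, and the isometry gives $\|\wh C(t_0)\| = \|Y^0\|$. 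Combining this with the vanishing derivative yields $\|\wh Y^1\|=\|Y^0\|$.

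\textbf{Near-conservation after truncation.} Here I would invoke the truncation estimate for the Tucker format from \cite{CLS23}. Truncating with tolerance $\vartheta$ mode by mode, by successive SVDs of the core matricizations, gives $\|Y^1-\wh Y^1\|\le c\vartheta$. The triangle inequality then gives $\big|\|Y^1\|-\|Y^0\|\big| = \big|\|Y^1\|-\|\wh Y^1\|\big|\le\|Y^1-\wh Y^1\|$.

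The main obstacle is recovering the specific constant $c=\|C^0\|(d-1)+1$. This constant depends on how the tolerance is applied in the truncation, for instance relative to the core norm in the first $d-1$ modes and absolutely in the last one. For this I would reproduce the argument of \cite{CLS23} verbatim: the basis enlargement from $2r_i$ to $4r_i$ does not enter it, since that argument only uses the orthonormality of the bases, the Galerkin form of the core update, and the fact that the augmented basis contains $\bfU_i^0$.
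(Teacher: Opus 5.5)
Your proposal is correct and takes the same route as the paper. The paper proves nothing explicitly here; it only observes that the norm-preservation argument of \cite{CeKL22,CLS23} carries over, because that argument uses just the orthonormality of the bases and the Galerkin form of the core update. Your explicit steps are exactly that argument: the vanishing derivative of $\|\wh C(t)\|^2$, the exactness of $\wh C(t_0)$ because the range of $\wh\bfU_i^1$ contains that of $\bfU_i^0$, and then the truncation bound of \cite{CLS23} combined with the triangle inequality.
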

\paragraph{Energy conservation for Schrödinger equations}
Consider the tensor Schrödinger equation with a Hamiltonian $H:\C^{n_1 \times \dots \times n_d} \rightarrow \C^{n_1 \times \dots \times n_d}$ that is linear and self-adjoint. Further, define the energy by 
\begin{align*}
    E(Y) = \langle Y,H[Y]\rangle.
\end{align*}
\begin{theorem}
     The second-order augmented BUG integrator preserves the energy: for every step size $h>0$,
    \begin{align*}
        E(\wh Y^1) = E(Y^0).
    \end{align*}
    This implies a near-conservation of energy up to a multiple of the truncation tolerance $\vartheta$:
    \begin{align*}
        \big \vert E(Y^1) - E(Y^0) \big\vert \leq c \vartheta \|H[Y^1 + \wh Y^1]\|,
    \end{align*}
    with $c = \|C^0\|(d-1) + 1$.
\end{theorem}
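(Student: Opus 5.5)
The plan has two parts: first show exact energy conservation $E(\wh Y^1)=E(Y^0)$ for the untruncated Galerkin solution, then control the effect of the truncation step. For the first part I use only two facts. The basis $\wh\bfU_i^1=[\wh\bfU_i^0,\wt\bfU_i]$ is orthonormal, and its first $r_i$ columns are $\bfU_i^0$. Hence $Y^0=C^0\bigtimes_i\bfU_i^0$ lies exactly in the Galerkin space
\begin{align*}
\mathcal{W}=\{\,C\textstyle\bigtimes_{i=1}^d\wh\bfU_i^1 : C\in\C^{4r_1\times\cdots\times4r_d}\,\},
\end{align*}
and $\wh C(t_0)\bigtimes_i\wh\bfU_i^1=Y^0$. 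The orthogonal projector onto $\mathcal W$ is $\Pi Z:=Z\bigtimes_i\wh\bfU_i^1\wh\bfU_i^{1,*}$. Set $\wh Y(t):=\wh C(t)\bigtimes_i\wh\bfU_i^1$. Since the bases are fixed during the C-step, the C-step is equivalent to the projected equation $\dot{\wh Y}=\Pi F(t,\wh Y)$ with $\wh Y(t_0)=Y^0$. For the Schrödinger equation $F(Y)=-iH[Y]$, which is linear. Using self-adjointness of $H$ and $\Pi$, together with $\Pi\wh Y=\wh Y$,
\begin{align*}
\tfrac{d}{dt}E(\wh Y)=2\,\mathrm{Re}\langle H[\wh Y],\dot{\wh Y}\rangle = 2\,\mathrm{Re}\langle \Pi H[\wh Y],-i\Pi H[\wh Y]\rangle=0 .
\end{align*}
Note that $\langle \wh Y,H\dot{\wh Y}\rangle=\langle H\wh Y,\dot{\wh Y}\rangle$ and $\langle H\wh Y,\Pi(\cdot)\rangle=\langle \Pi H\wh Y,\cdot\rangle$. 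Therefore $E(\wh Y^1)=E(\wh Y(t_0))=E(Y^0)$. This is the same computation as in \cite{CeKL22,CLS23}; the only change is that the Galerkin space is larger.

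For the second part, which estimates the effect of truncation, self-adjointness gives
\begin{align*}
E(Y^1)-E(\wh Y^1)=\langle Y^1,HY^1\rangle-\langle\wh Y^1,H\wh Y^1\rangle=\mathrm{Re}\langle Y^1-\wh Y^1,H[Y^1+\wh Y^1]\rangle .
\end{align*}
The left-hand side is real because $H$ is self-adjoint, and the identity follows from expanding the right-hand side and cancelling the cross terms. Cauchy--Schwarz then gives $|E(Y^1)-E(Y^0)|\le\|Y^1-\wh Y^1\|\,\|H[Y^1+\wh Y^1]\|$.

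The main obstacle is the bound $\|Y^1-\wh Y^1\|\le c\vartheta$ with $c=\|C^0\|(d-1)+1$. I expect this to follow exactly as in the near norm-conservation theorem and in \cite{CLS23}. The truncation is a sequence of per-mode (HOSVD-type) truncations, each with relative tolerance $\vartheta$. The mode-wise errors accumulate linearly, weighted by the norm of the core, which equals $\|\wh C(t_1)\|=\|\wh Y^1\|=\|Y^0\|=\|C^0\|$ by norm conservation. Norm conservation applies because $\mathrm{Re}\langle Y,-iHY\rangle=0$. I would reuse the truncation estimate from that preceding result verbatim, and only check that it depends on nothing but the truncation procedure and $\|\wh C(t_1)\|$. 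That suffices to obtain the stated constant.
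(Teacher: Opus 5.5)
Your proposal is correct and takes the route the paper uses by reference to \cite{CeKL22,CLS23}. The C-step is a Galerkin step on the fixed orthonormal bases $\wh\bfU_i^1$, whose ranges contain those of $\bfU_i^0$, so $\wh Y(t_0)=Y^0$ and the energy is conserved exactly; the truncation error then enters through $E(Y^1)-E(\wh Y^1)=\mathrm{Re}\langle Y^1-\wh Y^1,H[Y^1+\wh Y^1]\rangle$, Cauchy--Schwarz, and the same truncation bound $\|Y^1-\wh Y^1\|\le c\vartheta$ behind the near norm-conservation result, where $\|\wh C(t_1)\|=\|C^0\|$ by norm conservation. As in the paper, this assumes the C-step is integrated exactly, and it only needs the range inclusion, not that the first $r_i$ columns of $\wh\bfU_i^1$ coincide with $\bfU_i^0$.
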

\paragraph{Energy decay for gradient systems}
Now consider a gradient system
\begin{align*}
    \dot A(t) = -\nabla E(A(t)) \quad \text{with a given function} \ E:\R^{n_1 \times \dots \times n_d} \rightarrow \R.
\end{align*}
Along every solution, we have energy decay
\begin{align*}
    \partial_t E(A(t)) = \langle \nabla E(A(t)),\dot A(t) \rangle = - \|\nabla E(A(t))\|^2.
\end{align*}

\begin{theorem}
    The second-order augmented BUG integrator diminishes the energy: for every step size $h>0$,
    \begin{align*}
        E(\wh Y^1) \leq E(Y^0) - \alpha^2 h,
    \end{align*}
    where $\alpha =  \min_{0 \leq \mu \leq 1} \| \nabla E(\wh{Y}(t_0 + \mu h)) \bigtimes_{i=1}^{d} \wh{\bfU}_{i}^{1,*} \| = \| \nabla E({Y}^0) \bigtimes_{i=1}^{d} \wh{\bfU}_{i}^{1,*} \| + O(h)$. By the mean value theorem and the truncation, this implies:
    \begin{align*}
	E(Y^1 ) \leq E(\wh{Y}^1 ) + \beta c \vartheta \le E(Y^0)  - \alpha^2 h + \beta c \vartheta
\end{align*}
with $\beta = \text{max}_{0\leq \mu \leq 1} \| \nabla E( \mu Y^1  + (1-\mu)\wh{Y}^1 ) \|=\| \nabla E(Y^1) \|+O(\vartheta)$.
\end{theorem}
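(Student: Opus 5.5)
The plan is to treat the energy-decay statement exactly as in the first-order augmented BUG analysis of \cite{CeKL22,CLS23}. The only structural facts used are that the bases $\wh\bfU_i^1$ are orthonormal and fixed during the C-step, and that the C-step is a Galerkin projection of the full flow onto the subspace they span. The fact that these bases now have $4r_i$ columns and carry pre-augmented half-step information plays no role.

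First, define $\wh Y(t) := \wh C(t)\bigtimes_{i=1}^d \wh\bfU_i^1$ for $t\in[t_0,t_1]$, so that $\wh Y^1 = \wh Y(t_1)$. Then check that $\wh Y(t_0) = Y^0$. This requires that $\wh\bfU_i^1\wh\bfU_i^{1,*}\bfU_i^0 = \bfU_i^0$, which holds because the first $r_i$ columns of $\wh\bfU_i^0$, and hence of $\wh\bfU_i^1$, are $\bfU_i^0$ (see the remark after the algorithm). Consequently $\wh C(t_0)\bigtimes_i\wh\bfU_i^1 = C^0\bigtimes_i \wh\bfU_i^1\wh\bfU_i^{1,*}\bfU_i^0 = Y^0$.

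Next, differentiate the energy along $\wh Y$. Since $\wh\bfU_i^1$ are orthonormal and time independent, the C-step gives
\begin{align*}
\dot{\wh Y}(t) = -\nabla E(\wh Y(t))\bigtimes_{i=1}^d \wh\bfU_i^1\wh\bfU_i^{1,*} =: -\Pi\,\nabla E(\wh Y(t)),
\end{align*}
where $\Pi$ is an orthogonal projector on $\mathcal V$. Therefore
\begin{align*}
\tfrac{d}{dt}E(\wh Y(t)) = -\langle \nabla E(\wh Y), \Pi\nabla E(\wh Y)\rangle = -\|\Pi\nabla E(\wh Y(t))\|^2 = -\|\nabla E(\wh Y(t))\bigtimes_i\wh\bfU_i^{1,*}\|^2 .
\end{align*}
Integrating over $[t_0,t_0+h]$ and bounding the integrand below by its minimum over $\mu\in[0,1]$ gives $E(\wh Y^1)\le E(Y^0)-\alpha^2h$. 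The claimed expression $\alpha=\|\nabla E(Y^0)\bigtimes_i\wh\bfU_i^{1,*}\|+O(h)$ follows from $\wh Y(t_0+\mu h)=Y^0+O(h)$ and Lipschitz continuity of $\nabla E$. This Lipschitz continuity is the standing assumption \ref{ass:bounded} applied to $F=-\nabla E$.

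For the truncated step, apply the mean value theorem on the segment from $\wh Y^1$ to $Y^1$:
\begin{align*}
E(Y^1)-E(\wh Y^1) = \langle \nabla E(\mu Y^1+(1-\mu)\wh Y^1), Y^1-\wh Y^1\rangle \le \beta\,\|Y^1-\wh Y^1\|
\end{align*}
for some $\mu\in[0,1]$. The remaining task is the truncation estimate $\|Y^1-\wh Y^1\|\le c\vartheta$, which I will take from the Tucker/TTN truncation analysis in \cite{CLS23}. That analysis gives a bound $c = \|C^0\|(d-1)+1$ when truncating the core mode by mode with tolerance $\vartheta$, and the constant is insensitive to the augmented dimension $4r_i$. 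Finally, $\beta=\|\nabla E(Y^1)\|+O(\vartheta)$ follows again from the Lipschitz property.

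I expect the main obstacle to be the two bookkeeping points rather than the energy computation itself. The first is verifying $\wh Y(t_0)=Y^0$ exactly, which depends on the column-ordering convention for $\wh\bfU_i^1$. The second is confirming that the truncation constant from \cite{CLS23} carries over verbatim. If the ordering convention failed, I would instead use that $\operatorname{range}(\bfU_i^0)\subset\operatorname{range}(\wh\bfU_i^1)$, which holds by construction; that inclusion alone suffices to give $\Pi Y^0 = Y^0$.
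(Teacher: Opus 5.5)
Your proposal is correct and takes the paper's route. The paper gives no separate proof; it defers to the first-order augmented BUG analysis of \cite{CeKL22,CLS23}, noting that only two facts are needed: the basis $\wh\bfU_i^1$ is orthonormal and fixed, with range containing that of $\bfU_i^0$, and the C-step is a Galerkin step. These are exactly what you use to obtain $\frac{d}{dt}E(\wh Y(t)) = -\|\nabla E(\wh Y(t))\bigtimes_{i=1}^d \wh\bfU_i^{1,*}\|^2$, after which the mean value theorem and the truncation bound finish the argument. Your range-inclusion justification of $\wh Y(t_0)=Y^0$ is sufficient on its own, so the column-ordering argument is not needed.
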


\section{Second-Order Tree Tensor Network Integrators}\label{sec:ttn}
Tree tensor networks, also known as hierarchical Tucker tensors, are a hierarchical, data-sparse format to approximate high-dimensional tensors. We briefly summarize the definition of tree tensor networks following the notation of \cite{CeLW20,CLS23,ceruti2024parallel}. The hierarchical structure is encoded in a tree with unequal leaves defined as follows.
\begin{definition}[Ordered trees with unequal leaves]
	
	Let $\mathcal{L}$ be a given finite set, the elements of which are referred to as leaves.
	We  define the set $\mathcal{T}$ of trees  $\tau$ with the corresponding set of leaves $L(\tau)\subseteq \mathcal{L}$ recursively as follows:
	
	\begin{enumerate}[label=(\roman*)]
		\item
		 {\em Leaves are trees:} 
		$ \mathcal{L} \subset \mathcal{T}$,\ \text{ and }\  $L(l) := \{l\}$ for each $l \in \mathcal{L}$.
		\item  {\em Ordered $m$-tuples of trees with different leaves are again trees:} 
		If, for some $m\ge 2$,
		$$
		\tau_1, \dots, \tau_m \in \mathcal{T} 
		\quad \text{ with }\quad
		L(\tau_i ) \cap L(\tau_j ) = \emptyset \quad \forall i \neq j,
		$$
		 then their ordered $m$-tuple is in $\mathcal{T}$:		
		$$ \tau := (\tau_1, \dots, \tau_m) \in \mathcal{T} 
		, \quad \text{ and }\quad 
		L(\tau) := \dot{\bigcup}_{i=1}^m L(\tau_i) \ . 
		$$
	\end{enumerate}		
	%An element $\tau \in \mathcal{T}$ is called a tree.
\end{definition}
The trees $\tau_1,\dots,\tau_m$ are called direct subtrees of $\tau$, together with direct subtrees of those direct subtrees are called subtrees of $\tau$. The definition allows for a partial ordering. For $\tau,\sigma \in \mathcal{T}$ we set
\begin{align*}
    &\sigma \leq \tau \text{ if and only if } \sigma \text{ is a subtree of } \tau, \\
    &\sigma < \tau \text{ if and only if } \sigma \leq \tau \text{ and } \sigma \neq \tau.
\end{align*}
Following \cite{CeLW20,CLS23}, we define the height of a tree $\tau$ by $0$ if $\tau = \ell$ is a leaf. For a tree $\tau = (\tau_1,\dots,\tau_m)$, we set $h(\tau) := \max \{h(\tau_1),\dots,h(\tau_m)\} +1$.

Now fix a maximal tree $\bar \tau$. With each leaf we associate a basis matrix $\bfU_\ell \in \C^{n_\ell \times r_\ell }$ for $\ell \in \mathcal{L}$ and with each tree $\tau=(\tau_1,\dots,\tau_m) \leq \bar \tau$ we associate a connecting tensor $C_\tau \in \C^{r_\tau \times r_{\tau_1} \times \dots \times r_{\tau_m} }$, where we assume $r_{\bar \tau} = 1$. We assume connecting tensors of full multilinear rank $(r_\tau,r_{\tau_1},\dots,r_{\tau_m})$, which implies the necessary condition
\begin{align*}
    r_{\tau_i} \leq \prod_{j=0,j \neq i}^m r_{\tau_j} \quad \text{for all } i=0,1,\dots,m,
\end{align*}
where we used $r_{\tau_0} = r_{\tau}$. With all this, we define a tree tensor network recursively from the bottom to the top.

\begin{definition}[Tree tensor network]
	For a given tree $\bar \tau \in \mathcal{T}$ and basis matrices $\bfU_l$ and connection tensors $C_\tau$ as described above, we recursively define a tensor $Y_{\bar \tau}$ with a tree tensor network representation (or briefly a tree tensor network) as follows: 
	\begin{enumerate}[label=(\roman*)]
		\item
		For each leaf  $\,\tau = l \in \mathcal{L}$, we set 
		$$Y_l := \bfU_l^\top \in \C^{r_l \times n_l} \ .  $$
		\item
		For each subtree $\tau = ( \tau_1, \dots, \tau_m)$  (for some $m\ge 2$) of $\bar\tau $, 
		we set \\
		$n_\tau = \prod_{i=1}^m  n_{\tau_i}$ and $\I_\tau$ the identity matrix of dimension $r_\tau$, and
		\begin{align*}
		& Y_\tau := C_\tau \times_0 \bfI_{\tau} \bigtimes_{i=1}^m \bfU_{\tau_i} 
		\in \mathbb{C}^{r_\tau \times n_{\tau_1} \times \dots \times n_{\tau_m}},
		\\
		& \bfU_{\tau} := \mat_0( Y_\tau )^\top \in \mathbb{C}^{n_\tau \times r_\tau} \ .
		\end{align*}
		The subscript $0$ in $\times_0$ and $\mat_0( Y_\tau )$ refers to the mode $0$ of dimension $r_{\tau}$ in $\C^{r_\tau \times r_{\tau_1}\times\dots\times r_{\tau_m}} $. 
	\end{enumerate}
	The tree tensor network $Y_{\bar{\tau}}$ (more precisely, its representation in terms of the matrices $\bfU_\tau$) is called {\em orthonormal} if for each subtree $\tau < \bar{\tau}$, the matrix $\bfU_\tau$ has orthonormal columns. 
\end{definition}
Note that tree tensor networks do not allow for loops in the network. Binary tree tensor networks have been studied in the mathematical literature as \textit{hierarchical Tucker tensors} \cite{hackbusch_book2012} and for general tree structures as \textit{tensors in tree-based format} \cite{falco2021tree}. \textit{Tensor trains} \cite{oseledets2011tensor}, also known as \textit{matrix product states (MPS)} \cite{MPS_paper} in the physics literature, are contained in the class of tree tensor networks.

In the following paragraph, we briefly summarize the construction of reduced versions of the function $F_{\bar \tau}$ and the initial data $Y_{\bar \tau}^0$. We refer to \cite{CeLW20} for a detailed construction via prolongation and restriction operators. Assume that $F_{\bar \tau}$ maps tensors in tree tensor network representation to the same tensor type. Further, suppose a tree $\tau = (\tau_1,\dots,\tau_m) \leq \bar \tau$ and a corresponding function $F_\tau$. For the tree $\tau$ we define the tensor space $\mathcal{V}_\tau = \C^{r_\tau \times n_{\tau_1} \times \dots \times n_{\tau_m}}$ and the manifold of tree tensor networks of full tree rank $(r_\sigma)_{\sigma \leq \tau}$, which we denote by $\mathcal{M}_\tau = \mathcal{M}_\tau (\tau, (n_\ell)_{\ell \in \mathcal{L}(\tau)},(r_\sigma)_{\sigma \leq \tau})\subset \mathcal{V}_\tau$. 

By induction, we assume that $F_\tau: \mathcal{V}_\tau \rightarrow \mathcal{V}_\tau$ and $Y_\tau^0 \in \mathcal{M}_\tau$ are already constructed. For the construction of the reduced function $F_{\tau_i}$ and reduced initial data $Y_{\tau_i}^0$, we consider $Y_\tau^0 = C_\tau^0 \times_0 \bfI_\tau \bigtimes_{j=1}^m \bfU_{\tau_j}^0$ and define the matrix
\begin{align*}
    \bfV_{\tau_i}^0 = \mat_i \left( \ten_i(\wt\bfQ_{\tau_i}^{0,\top}) \times_0 \bfI_\tau \bigtimes_{j \neq i } \bfU_{\tau_j}^0 \right)^*,
\end{align*}
where $\wt\bfQ_{\tau_i}^0$ is the unitary factor in the QR decomposition $\mat_i (C_\tau^0)^\top = \wt\bfQ_{\tau_i}^0 \wt \bfR_{\tau_i}^0$. The prolongation $\pi_{\tau,i}: \mathcal{V}_{\tau_i} \rightarrow \mathcal{V}_\tau$ and restriction $\pi_{\tau,i}^\dagger: \mathcal{V}_\tau \rightarrow \mathcal{V}_{\tau_i}$ are defined by
\begin{align*}
    \pi_{\tau,i}(Y_{\tau_i}) &:= \ten_i \left( \mat_0 (Y_{\tau_i})^\top \bfV_{\tau_i}^{0,*} \right) \in \mathcal{V}_{\tau}, \ \text{ for } Y_{\tau_i} \in \mathcal{V}_{\tau_i}, \\
    \pi_{\tau,i}^\dagger (Z_\tau ) &:= \ten_0 \left( (\mat_i(Z_\tau)\bfV_{\tau_i}^0)^\top \right) \in \mathcal{V}_{\tau_i}, \ \text{ for } Z_\tau \in \mathcal{V}_\tau.
\end{align*}
Note that $\pi_{\tau,i}^\dagger$ is both, the left inverse and the adjoint of $\pi_{\tau,i}$, cf. \cite{CeLW20}. Then, for a given function $F_{\bar \tau} = F: \mathcal{V}_{\bar \tau} \rightarrow \mathcal{V}_{\bar \tau}$ and a tree tensor network $Y_{\bar \tau}^0$, we recursively define for each tree $\tau = (\tau_1,\dots,\tau_m) \leq \bar \tau$
\begin{align*}
    F_{\tau_i} &= \pi_{\tau,i}^\dagger \circ F_\tau \circ \pi_{\tau,i}, \\
    Y_{\tau_i}^0 &= \pi_{\tau,i}^\dagger (Y_\tau^0),
\end{align*}
where $\pi_{\tau,i}^\dagger$ and $\pi_{\tau,i}$ are the restriction and prolongation, respectively. 

\subsection{Second-order parallel TTN integrator}
Fix a tree $\bar \tau$ together with a corresponding tree tensor network $Y_{\bar \tau}^0 = C_{\bar \tau}^0 \times_0 \bfI_{\bar \tau} \bigtimes_{i=1}^m \bfU_{\bar \tau_i}^0$. To improve readability, we will assume in the following that the ranks of $\wh \bfU_\tau^0$ and $\wh \bfQ_\tau^0$ are always equal to $2r_\tau^0$ for all $\tau \leq \bar \tau$, which is in general not true. However, everything we write below also works for the other case where the actual rank is $\leq 2r_\tau^0$ without any further adjustments. \\
We formulate the algorithm for tree tensor networks. In the Tucker case, this formulation falls back to the integrator presented in Sec.~\ref{sec:parallelTucker}. Analogously, we perform the following steps:
\begin{enumerate}
    \item \textbf{Pre-augmentation} (sequential, leaves to root): augment all
    bases to obtain $\wh\bfU_\tau^0$ and the pre-augmented network
    $\wh Y_{\bar\tau}^0$ of tree ranks $(2r_\tau)_{\tau \leq \bar \tau}$.
    \item \textbf{Construction of the reductions} (sequential, root to leaves):
    compute $\bfQ_{\tau}^0$, $\bfS_{\tau}^0$ and $\wh\bfQ_{\tau}^0$ for every
    subtree $\tau \leq \bar \tau$ and the augmented reduced functions $\wh F_{\tau}$ and
    initial data $\wh Y_{\tau}^0$.
    \item \textbf{Time integration} (fully parallel): for each $\tau \leq \bar \tau$ solve all the resulting effective evolution equations fully in parallel.
    \item \textbf{Augmentation} (sequential, leaves to root): combine the evolved
    quantities with the initial data into a network of tree ranks $(4r_\tau)_{\tau \leq \bar \tau}$.
    \item \textbf{Truncation}: truncate back to tree ranks $(r_\tau^1)_{\tau \leq \bar \tau}$ with tolerance $\vartheta$.
\end{enumerate}
These five steps are described in detail in the following subsections. 
%\subsubsection{Pre-augmentation: the bases $\wh \bfU_\tau^0$} \label{subsub:pre_augment_TTN}
\subsubsection{\texorpdfstring%
  {Pre-augmentation: $\wh \bfU_{\tau}^0$ and $\wh Y_{\bar \tau}^0$}%
  {Pre-augmentation}} \label{subsub:pre_augment_TTN}
The pre-augmentation step is performed from the leaves to the root. On each level of the tree, the pre-augmentation can be done fully in parallel. 

\textbf{Augmentation of basis matrices:} Suppose $\tau_i = \ell$ is a leaf with corresponding basis matrix $\bfU_{\ell}^0 \in \C^{n_\ell \times r_\ell^0}$. Then we define the augmented $\wh \bfU_{\ell}^0$ by 
\begin{align*}
    \wh \bfU_{\ell}^0 := \text{orth} \left( [\bfU_{\ell}^0,\bfF_{\ell}(\bfY_{\ell}^0)] \right) \in \C^{n_\ell \times 2r_\ell^0}.
\end{align*}
Note that the matrix $\bfV_{\tau_i}^0$ is implicitly included in the definition of $\bfF_{\ell}$. For a later recursion in the definition, we define $\bfM_{\tau_i} := \wh\bfU_{\tau_i}^{0,*} \bfU_{\tau_i}^{0}$. 

\textbf{Augmentation of core tensors:} Suppose a tree $\tau = (\tau_1,\dots,\tau_m) \leq \bar \tau$, where we have computed already $\wh \bfU_{\tau_i}^0$ for $i=1,\dots,m$. In the same logic as for the leaves, we would like to define  
\begin{align*}
    \wh \bfU_{\tau}^0 = \text{orth} \left( [\bfU_{\tau}^0, \mat_0(F_{\tau}(Y_{\tau}^0) )^\top] \right).
\end{align*}
However, due to its large dimensions, this matrix is infeasible to compute and store. Therefore, we represent $\wh \bfU_{\tau}^0$ as a Tucker tensor of the form $\wh \bfU_{\tau}^0 = \mat_0(\wh Y_{\tau}^0)^{\top}$, where $\wh Y_{\tau}^0 := \wh C_{\tau}^0 \times_0 \bfI_\tau \bigtimes_{i=1}^m \wh\bfU_{\tau_i}^0$ for an unknown tensor $\wh C_{\tau}^0$ that remains to be determined. This tensor can be computed by testing $
\bfU_{\tau}^0$ against $\bigotimes_{i=1}^m \wh \bfU_{\tau_i}^{0}$. Let us start with only the first $r_{\tau}^0$ columns of $\wh \bfU_{\tau}^0$. Then, we obtain by the unfolding formula \eqref{eq:unfolding_formula}
\begin{align*}
    \left(\bigotimes_{i=1}^m \wh \bfU_{\tau_i}^{0,*} \right) \bfU_{\tau}^0 &= \bigotimes_{i=1}^m \wh \bfU_{\tau_i}^{0,*} \mat_0(Y_{\tau}^0)^\top = \bigotimes_{i=1}^m \wh\bfU_{\tau_i}^{0,*} \bfU_{\tau_i}^{0}\mat_0(C_{\tau}^0)^\top \\
    &= \bigotimes_{i=1}^m \bfM_{\tau_i} \mat_0(C_{\tau}^0)^\top = \mat_0(C_{\tau}^0 \bigtimes_{i=1}^m \bfM_{\tau_i})^\top.
\end{align*}
The remaining $r_{\tau}^0$ columns of $\wh \bfU_{\tau}^0$ span the information in $\mat_0(F_{\tau}(Y_{\tau}^0))^\top$ that is not spanned by $\bfU_{\tau}^0$. Thus, we test this term against $\bigotimes_{i=1}^m \wh \bfU_{\tau_i}^{0}$ and obtain, by the unfolding formula \eqref{eq:unfolding_formula},
\begin{align*}
    \left(\bigotimes_{i=1}^m \wh \bfU_{\tau_i}^{0,*} \right) \mat_0(F_{\tau}(Y_{\tau}^0))^\top = \mat_0(F_{\tau}(Y_{\tau}^0) \bigtimes_{i=1}^m \wh \bfU_{\tau_i}^{0,*})^\top. 
\end{align*}
Since both matrices are now of smaller size we set 
\begin{align*}
    \mat_0(\wh C_{\tau}^0)^\top :=& \ \text{orth} \left( \bigotimes_{i=1}^m \wh \bfU_{\tau_i}^{0,*} \ [\bfU_{\tau}^0, \mat_0(F_{\tau}(Y_{\tau}^0))^\top] \right) \\ =& \ \text{orth} \left( [\mat_0(C_{\tau}^0 \bigtimes_{i=1}^m \bfM_{\tau_i})^\top, \mat_0(F_{\tau}(Y_{\tau}^0) \bigtimes_{i=1}^m \wh \bfU_{\tau_i}^{0,*} )^\top] \right) \in \C^{ \lnot 2r_{\tau}^0 \times 2r_{\tau}^0}.
\end{align*}
Since $C_\tau^0 \bigtimes_{i=1}^m \bfM_{\tau_i}$ is orthogonal, the  orthogonalization is computed such that the first $r_{\tau}^0$ columns still equal $C_\tau^0 \bigtimes_{i=1}^m \bfM_{\tau_i}$.
Finally, we recursively set 
\begin{align*}
    \wh Y_\tau^0 = \wh C_\tau^0 \times_0 \bfI_\tau \bigtimes_{i=1}^m \wh \bfU_{\tau_i}^0 \quad \text{and} \quad \wh \bfU_\tau^0 = \mat_0(\wh Y_\tau^0)^\top.
\end{align*}
Further, we define $\bfM_\tau := \wh \bfU_\tau^{0,*} \bfU_\tau^0$, which can be recursively computed from the leaves to the node $\tau$. 

%In other words, $ \wh Y_\tau^0$ is the original $Y_\tau^0$ projected in the pre-augmented basis, since 
%\begin{align*}
%    \wh Y_\tau^0 &= \wh C_\tau^0 \times_0 \bfI \bigtimes_{i=1}^m \wh \bfU_{\tau_i}^0 = C_\tau^0 \times_0 \bfI \bigtimes_{i=1}^m \wh \bfU_{\tau_i}^0 \bfM_{\tau_i} = C_\tau^0 \times_0 \bfI \bigtimes_{i=1}^m \wh \bfU_{\tau_i}^0 \wh \bfU_{\tau_i}^{0,*} \bfU_{\tau_i}^0.
%\end{align*}

%\subsubsection{Augmented restriction to the subtrees: $\bfQ_{\tau_i}$, $\bfS_{\tau_i}$ and $\wh \bfQ_{\tau_i}$}\label{subsub:constr_Q_TTN}
\subsubsection{\texorpdfstring%
  {Augmented restriction to the subtrees: $\bfQ_{\tau_i}^0$, $\bfS_{\tau_i}^0$ and $\wh \bfQ_{\tau_i}^0$}%
  {Augmented restriction to the subtrees}}\label{subsub:constr_Q_TTN}
As in the Tucker case, the reduced functions and the initial data on the subtrees are defined through augmented environment matrices $\wh \bfV_\tau^0$, which in turn are built from the factors $\bfQ_{\tau_i}^0$, $\bfS_{\tau_i}^0$ and $\wh \bfQ_{\tau_i}^0$. The tree structure introduces one genuinely new aspect compared to the Tucker
case. There, the only interior node is the root, whose $0$-mode is trivial, so
that the augmentation acts on the subtree modes only. For a subtree
$\tau < \bar\tau$, however, the $0$-mode carries the coordinates with respect to
the environment of the parent node, and this environment has to be augmented as
well. This is the only reason for distinguishing the augmented reduced functions $\wh F_\tau $ from the reduced functions $F_\tau$ from above - the two differ precisely in the dimension of the $0$-mode. Similarly, for the augmented initial data $\wh Y_\tau^0$. The construction of $\bfQ_{\tau_i}^0$, $\bfS_{\tau_i}^0$ and $\wh \bfQ_{\tau_i}^0$ and the corresponding augmented reduced function $\wh F_{\tau_i}$ and augmented initial data $\wh Y_{\tau_i}^0$ is the subject of this section and proceeds recursively from the root to the leaves.

%\jknote{Is this a good title? I got confused since this seems similar to what we did in the previous subsubsection. But I guess this is now the augmentation of the core tensors (and thus the test functions $V$), not of the basis. But since the core tensors can be interpreted as factors of the basis, they are quite similar.} \ds{Ich verstehe was du meinst. Grundsätzlich hast du auch recht, dass es hier "nur" drum geht, die Definitionen korrekt für den TTN Fall aufzuschreiebn. Ich würde aber sagen da alles mit korrekten Dimensionen aufzuschreiben ist nicht trivial.}\jknote{Ich stimme absolut zu, dass der Aufschrieb nicht trivial und sehr wichtig ist. Der Kommentar war nur: "Können wir das mit einem besseren Titel leichter verständlich machen oder mit einem einleitenden Satz erklären warum wir nicht die Berechnung von $Q$ etc schon im vorherigen Kapitel abgedeckt haben?"} \ds{Sowas von der Art her?}

For $\tau = \bar \tau$, we set $\wh{\mathcal{V}}_{\bar \tau} = \mathcal{V}_{\bar \tau}$. For every subtree $\tau < \bar\tau$, we introduce the augmented tensor space
\begin{align*}
    \wh{\mathcal{V}}_\tau := \C^{2r_\tau^0 \times n_{\tau_1} \times \cdots \times n_{\tau_m}},
\end{align*}
where $2r_\tau^0$ denotes the number of columns of the matrix $\wh\bfQ_\tau^0$ constructed below for the edge connecting $\tau$ to its parent. %For , $\hat r_\tau \leq 2 r_\tau^0$ denotes the number of columns of the pre-augmented basis matrix $\wh\bfU_\tau^0$.
 
In a recursion from the root to the leaves, we assume by induction that the augmented function $\wh F_\tau : [0,t^*] \times \wh{\mathcal{V}}_\tau \rightarrow
\wh{\mathcal{V}}_\tau$ and the embedded initial data
$\wh Y_\tau^0 \in \wh{\mathcal{V}}_\tau$ are already constructed, where at the
root we set $\wh F_{\bar\tau} = F_{\bar \tau}$ and $\wh  X_{\bar \tau}^0 =  Y_{\bar\tau}^0 $. For all subtrees $\tau < \bar \tau$ we assume the form 
\begin{align*}
    \wh X_\tau^0 = \wh D_\tau^0 \times_0 \bfI_{\tau}
    \bigtimes_{j=1}^m \bfU_{\tau_j}^0,
    \qquad
    \wh D_\tau^0 \in \C^{2r_\tau^0 \times  r_{\tau_1}^0 \times \cdots \times  r_{\tau_m}^0},
\end{align*}
i.e., the connecting tensor $\wh D_\tau^0$ carries the coefficients of $Y_\tau^0$ with respect to the augmented basis of the parent edge. In addition, we define the right-hand side projected in the pre-augmented basis
\begin{align*}
    \wh C_{F_\tau}^0
    := \wh F_\tau(t_0, \wh X_\tau^0) \times_0 \bfI_{\tau}
       \bigtimes_{j=1}^m \wh\bfU_{\tau_j}^{0,*}
    \;\in\; \C^{2r_\tau^0 \times 2 r_{\tau_1}^0 \times \cdots \times 2 r_{\tau_m}^0}.
\end{align*}

For the construction of the augmented reduced function $\wh F_{\tau_i}$ and
the embedded $\wh X_{\tau_i}^0$ of a subtree
$\tau_i$, $i = 1,\dots,m$, we first compute the QR decomposition
\begin{align*}
    \bfQ_{\tau_i}^0 \, \bfS_{\tau_i}^{0,\top} := \mat_i \bigl( \wh D_\tau^0 \bigtimes_{j\neq i}^m \wh \bfU_
    {\tau_j}^{0,*}  \bfU_{\tau_j}^0 \bigr)^{\top},
\end{align*}
with orthonormal $\bfQ_{\tau_i}^0 \in \C^{2 r_\tau^0 2 r_{\lnot \tau_i}^0 \times r_{\tau_i}^0}$ and $\bfS_{\tau_i}^{0,\top} \in \C^{r_{\tau_i}^0 \times r_{\tau_i}^0}$. Analogously to the Tucker case, we then define 
\begin{align}\label{eq:Qhat-construction}
    \wh\bfQ_{\tau_i}^0
    := \text{orth} \left[ \, \bfQ_{\tau_i}^0, \;
       \mat_i \bigl( \wh C_{F_\tau}^0 \times_i \bfU_{\tau_i}^{0,*}\wh \bfU_{\tau_i}^0 \bigr)^{\!\top}
       \right] 
    \;\in\; \C^{2r_\tau^0 2 r_{\lnot \tau_i}^0 \times 2r_{\tau_i}^0}.
\end{align}
Hence, we define the matrix
\begin{align*}
    \wh\bfV_{\tau_i}^0
    := \mat_i \Bigl( \ten_i \bigl( \wh\bfQ_{\tau_i}^{0,\top} \bigr)
      \times_0 \bfI_{\tau}
      \bigtimes_{j \neq i} \wh\bfU_{\tau_j}^0 \Bigr)^{\!*}
\end{align*}
and the augmented prolongation
$\wh\pi_{\tau,i} : \wh{\mathcal{V}}_{\tau_i} \rightarrow \wh{\mathcal{V}}_\tau$
and restriction
$\wh\pi_{\tau,i}^\dagger : \wh{\mathcal{V}}_\tau \rightarrow \wh{\mathcal{V}}_{\tau_i}$
by
\begin{align*}
    \wh\pi_{\tau,i}(Y_{\tau_i})
    &:= \ten_i \bigl( \mat_0 (Y_{\tau_i})^\top \, \wh\bfV_{\tau_i}^{0,*} \bigr)
    \in \wh{\mathcal{V}}_{\tau},
    \quad \text{for } Y_{\tau_i} \in \wh{\mathcal{V}}_{\tau_i},
    \\
    \wh\pi_{\tau,i}^\dagger (Z_\tau)
    &:= \ten_0 \bigl( ( \mat_i(Z_\tau) \, \wh\bfV_{\tau_i}^0 )^\top \bigr)
    \in \wh{\mathcal{V}}_{\tau_i},
     \quad\text{for } Z_\tau \in \wh{\mathcal{V}}_\tau.
\end{align*}
Since $\wh\bfQ_{\tau_i}$ and the $\wh\bfU_{\tau_j}^0$ have orthonormal columns,
the matrix $\wh\bfV_{\tau_i}^0$ has orthonormal columns as well, and
$\wh\pi_{\tau,i}^\dagger$ is both the left inverse and the adjoint of
$\wh\pi_{\tau,i}$. 
Finally, we recursively define
\begin{align*}
    \wh F_{\tau_i} &= \wh\pi_{\tau,i}^\dagger \circ \wh F_\tau \circ \wh\pi_{\tau,i},
    \\
    \wh X_{\tau_i}^0 &= \wh\pi_{\tau,i}^\dagger \bigl( \wh X_\tau^0 \bigr). 
\end{align*}
It remains to construct the connecting tensor $\wh D_{\tau_i}^0$ of $\wh X_{\tau_i}^0 = \wh\pi_{\tau,i}^\dagger(\wh X_\tau^0)$. We determine this tensor by computing
\begin{align*}
    \wh\pi_{\tau,i}^\dagger \bigl( \wh X_\tau^0 \bigr) &= \ten_0 \left( \wh \bfV_{\tau_i}^{0,\top} \mat_i (\wh X_\tau^0)^\top \right) \\
    &= \ten_0 \left( \overline{\mat_i( \ten_i(\wh \bfQ_{\tau_i}^{0,\top}) \times_0 \bfI_{\tau} \bigtimes_{j \neq i} \wh \bfU_{\tau_j}^{0,\top})} \ \bigotimes_{j \neq i}  \bfU_{\tau_j}^0 \mat_i(\wh D_\tau^0)^\top \bfU_{\tau_i}^{0,\top} \right) \\
    &= \ten_0 \left( \wh \bfQ_{\tau_i}^{0,*} \bigtimes_{j \neq i} \wh \bfU_{\tau_j}^{0,*} \bfU_{\tau_j}^0 \mat_i(\wh D_\tau^0)^\top \bfU_{\tau_i}^{0,\top} \right) \\
    &= \ten_0 \left( \wh \bfQ_{\tau_i}^{0,*} \mat_i \bigl( \wh D_\tau^0 \bigtimes_{j\neq i}^m \wh \bfU_
    {\tau_j}^{0,*}  \bfU_{\tau_j}^0 \bigr)^{\top} \bfU_{\tau_i}^{0,\top}  \right).
\end{align*}
Inserting the definition of $\bfQ_{\tau_i}^0$ and $\bfS_{\tau_i}^{0,\top}$ we obtain
\begin{align*}
    \wh\pi_{\tau,i}^\dagger \bigl( \wh X_\tau^0 \bigr) &= \ten_0 \left( \wh \bfQ_{\tau_i}^{0,*} \bfQ_{\tau_i}^0 \bfS_{\tau_i}^{0,\top} \bfU_{\tau_i}^{0,\top}  \right) \\
    &= \ten_0 \left( \wh \bfQ_{\tau_i}^{0,*} \bfQ_{\tau_i}^0 \bfS_{\tau_i}^{0,\top} \mat_0(Y_{\tau_i}^0)^\top  \right) = Y_{\tau_i}^0 \times_0 \wh \bfQ_{\tau_i}^{0,*} \bfQ_{\tau_i}^0 \bfS_{\tau_i}^{0,\top}.
\end{align*}
Hence, if $\tau_i = (\sigma_1,\dots,\sigma_k)$ is again a tree, we have
\begin{align*}
    \wh D_{\tau_i}^0
    = C_{\tau_i}^0 \times_0
      \wh\bfQ_{\tau_i}^{0,*} \bfQ_{\tau_i}^0 \bfS_{\tau_i}^{0,\top}
    \;\in\; \C^{2r_{\tau_i}^0 \times r_{\sigma_1}^0 \times \cdots \times r_{\sigma_k}^0} ,
\end{align*}
and if $\tau_i = \ell$ is a leaf, we obtain
\begin{align*}
    \wh\pi_{\tau,i}^\dagger \bigl( \wh X_\tau^0 \bigr) = \bfU_\ell^0 \bfS_\ell^0 \bfQ_\ell^{0,\top} \overline{\wh \bfQ_\ell^0}.
\end{align*}

In other words, the non-orthonormal factor
$\wh\bfQ_{\tau_i}^{0,*} \bfQ_{\tau_i}^0 \bfS_{\tau_i}^{0,\top}
\in \C^{2r_{\tau_i}^0 \times r_{\tau_i}^0}$
is multiplied into the $0$-mode of the $i$th subtree. Note that this moves the orthogonality center to the subtree $\tau_i$ and, at the same time, expresses the $0$-mode in the augmented basis $\wh\bfQ_{\tau_i}$. The recursion then proceeds with $\wh F_{\tau_i}$ and $\wh X_{\tau_i}^0$, completing the induction step. Finally, we obtain the initial data for the second-order parallel BUG integrator by projecting each $\wh X_{\tau_i}^0$ into the pre-augmented basis, i.e., we define 
\begin{align*}
    \wh Y_{\tau_i}^0 := \wh X_{\tau_i}^0 \times_0 \wh \bfU_{\tau_i}^0 \wh \bfU_{\tau_i}^{0,*}.
\end{align*}
% \jknote{$X$ is a tensor. Perhaps:
% \begin{align*}
%     \wh Y_\tau^0 := \ten_0(\wh \bfU_\tau^0 \wh \bfU_\tau^{0,*}\mat_0(\wh X_\tau^0)) 
% \end{align*}
% or
% \begin{align*}
%     \wh Y_\tau^0 := \wh X_\tau^0 \times_0 \wh \bfU_\tau^0 \wh \bfU_\tau^{0,*}\,.
% \end{align*}
% }
% \ds{You are right, adjusted! }

\subsubsection{Parallel time integration}\label{subsec:time_integration_2nd_parallel_TTN}
The crucial point of the second order parallel BUG integrator is that all basis matrices and core tensors can be evolved fully in parallel. With the definitions from Section~\ref{subsub:pre_augment_TTN} and \ref{subsub:constr_Q_TTN}, we distinguish the cases where a subtree is either a leaf or a core tensor:
\begin{enumerate}
    \item \textbf{Evolution of a leaf:}\\
Suppose $\tau_i = \ell$ is a leaf. Then we solve the $n_\ell \times 2r_\ell^0$ matrix differential equation from $t_0$ to $t_1 = t_0 + h$
    \begin{align*}
        \dot \bfY_\ell(t) = \wh \bfF_\ell(t,\bfY_\ell (t)), \quad \bfY_\ell(t_0) = \wh \bfU_\ell^0 \wh \bfU_\ell^{0,*} \mat_0(\wh X_\ell^0)^\top = \wh\bfU_\ell^0 \wh\bfU_\ell^{0,*} \bfU_\ell^0 \bfS_\ell^0 \bfQ_\ell^{0,\top} \overline{\wh \bfQ_\ell^0}.
    \end{align*}
    Construct $\wh \bfU_\ell^1 = \mathrm{orth} (\wh\bfU_\ell^0,Y_\ell (t_1)) = (\wh\bfU_\ell^0,\wt \bfU_\ell) \in \C^{n_\ell \times 4r_\ell^0}$ as an orthonormal basis of the range of the $n_\ell \times 4r_\ell^0$ matrix $(\wh \bfU_\ell^0,Y_\ell(t_1))$. Compute the tensor $\wt C_\ell^1 = \ten_i(\wt \bfU_\ell^* Y_\ell (t_1) \wh \bfQ_\ell^{0,\top}) \in \C^{2r_\tau^0 \times 2r_{\tau_1}^0\times \dots \times 2r_{\tau_m}^0}$.
    \item \textbf{Evolution of a core tensor:}\label{subsub:time_integration_TTN}\\
    Now suppose a tree $\tau = (\tau_1,\dots,\tau_m) \leq \bar \tau$. In the same spirit as for the leaves, we want to project the core tensor at time $t_0$ in the pre-augmented basis $\wh \bfU_\tau^0$. I.e., we want the initial data $\wh \bfU_\tau^0 \wh \bfU_\tau^{0,*} \mat_0(\wh X_\tau^0)^\top = \wh\bfU_\tau^0 \wh\bfU_\tau^{0,*} \bfU_\tau^0 \bfS_\tau^0 \bfQ_\tau^{0,\top} \overline{\wh \bfQ_\tau^0}$. This can be computed efficiently by
    \begin{align*}
        \wh\bfU_\tau^0 \wh\bfU_\tau^{0,*} \bfU_\tau^0 \bfS_\tau^0 \bfQ_\tau^{0,\top} \overline{\wh \bfQ_\tau^0} &= \mat_0(\wh Y_\tau^0)^\top \bfM_\tau \bfS_\tau^0 \bfQ_\tau^{0,\top} \overline{\wh \bfQ_\tau^0} \\
        &= \mat_0 \left( \wh Y_\tau^0 \times_0 \wh\bfQ_{\tau}^{0,*} \bfQ_{\tau}^0 \bfS_{\tau}^{0,\top} \bfM_\tau^\top \right)^\top. 
    \end{align*}
    where we used the identity $\bfM_\tau = \wh\bfU_\tau^{0,*} \bfU_\tau^0$. Hence, we multiply the core tensor of $\wh Y_\tau^0 = \ten_0 (\wh \bfU_\tau^{0,\top})$ by the non-orthogonal matrix $\wh\bfQ_{\tau}^{0,*} \bfQ_{\tau}^0 \bfS_{\tau}^{0,\top} \bfM_\tau^\top$. Thus, to evolve a core tensor in time, we integrate from $t_0$ to $t_1 = t_0 + h$ the $2r_\tau \times 2r_{\tau_1} \times \dots \times 2r_{\tau_m}$ tensor differential equation
    \begin{align}
        \dot{\bar C}_\tau (t) = \wh F_\tau(t,\bar C_\tau(t) \bigtimes_{i=1}^m \wh \bfU_{\tau_i}^0) \bigtimes_{i=1}^m \wh \bfU_{\tau_i}^{0,*}, \quad \bar C_\tau(t_0) = \wh C_\tau^0 \times_0 \wh\bfQ_{\tau}^{0,*} \bfQ_{\tau}^0 \bfS_{\tau}^{0,\top} \bfM_\tau^\top .
    \end{align}
\end{enumerate}

\subsubsection{Augmentation of core tensors}
The augmentation step is a sequential procedure from the leaves to the root. Note that on each level the augmentation strategy can be parallelized. On the lowest level of the leaves, we already have the new augmented basis $\wh \bfU_\ell^1 = [\wh \bfU_\ell^0,\wt \bfU_\ell]$ of size $n_\ell \times 4r_\ell^0$ and we computed the tensor
\begin{align*}
    \wt C_\ell^1 = \ten_i(\wt \bfU_\ell^* Y_\ell (t_1) \wh \bfQ_\ell^{0,\top}).
\end{align*}
Using these objects, we explain how to augment the core tensors recursively from the leaves to the root. 
Suppose a tree $\tau = (\tau_1,\dots,\tau_m) \leq \bar \tau$ where the subtrees $\tau_i$ are already augmented, for $i=1,\dots,m$. In analogy to the Tucker case, we augment $\bar C_\tau(t_1)$ in the modes $1,\dots,m$ to a tensor $\wh{\bar C}_\tau \in \C^{2r_\tau^0 \times 4r_{\tau_1}^0 \times \dots \times 4r_{\tau_m}^0}$ such that 
\begin{align*}
    &\wh{\bar C}_\tau \bigtimes_{i=1}^m \wh \bfU_{\tau_i}^{0,*} \wh \bfU_{\tau_i}^1 = \bar C_\tau (t_1), \\
    &\wh{\bar C}_\tau \bigtimes_{j \neq i}^m \wh \bfU_{\tau_j}^{0,*} \wh \bfU_{\tau_j}^1 \times_i \wt\bfU_{\tau_i}^{0,*} \wh \bfU_{\tau_i}^1  = \wt C_{\tau_i}^{1} \quad \text{for} \ i=1,\dots,m,
\end{align*}
and zero-valued entries elsewhere. By this choices, all modes but the $0$-mode have been augmented. If $\tau$ is the root, there is nothing more to augment and we set $\wh C_\tau^1 = \wh{\bar C}_\tau$. If $\tau < \bar \tau$, the $0$-mode augmentation is done by computing an orthonormal basis $\wh \bfQ_\tau^1$ of rank $\leq 4r_\tau^0$ of the matrix
\begin{align*}
    \wh \bfQ_\tau^1 &= \text{orth} \left[ \, \mat_0 \left(\wh C_\tau^0 \bigtimes_{i=1}^m \wh \bfU_{\tau_i}^{1,*} \wh \bfU_{\tau_i}^0 \right)^\top , \, \mat_0\left( \wh{\bar C}_\tau \right)^\top
       \right] \\ 
       &= \left[ \mat_0 \left(\wh C_\tau^0 \bigtimes_{i=1}^m \wh \bfU_{\tau_i}^{1,*} \wh \bfU_{\tau_i}^0 \right)^\top , \, \wt \bfW_{\tau} \right].
\end{align*}
Here, the first matricization, which is already orthogonal, denotes the pre-augmented core $\wh C_\tau^0$ projected in the augment basis $\wh \bfU_{\tau_i}^1$, $i=1,\dots,m$, while the second one is the approximation at time $t_1$. We then set 
\begin{align*}
    \wh C_\tau^1 := \ten_0 \left( \wh \bfQ_\tau^{1,\top}\right) \in \C^{4r_\tau^0 \times 4r_{\tau_1}^0 \times\dots \times 4r_{\tau_m}^0}.
\end{align*}
In total we have the fully augmented tree tensor network 
\begin{align*}
    \wh Y_\tau^1 = \wh C_\tau^1 \times_0 \wh \bfI_{\tau} \bigtimes_{i=1}^m \wh \bfU_{\tau_i}^1.
\end{align*}
For the recursion, it remains to define the tensors $\wt C_\tau^1$ for all $\tau \leq \bar \tau$. In the same spirit as for the leaves, we set
\begin{align*}
    \wt C_\tau^1 := \ten_i(\wt \bfU_\tau^* \bfK_\tau (t_1) \wh \bfQ_\tau^{0,\top}),
\end{align*}
where 
\begin{align*}
    \bfK_\tau(t_1) &= \mat_0(Z_\tau^K)^\top, \quad \text{with}  &&Z_\tau^K = \bar C_\tau (t_1) \times_0 \bfI_{\tau} \bigtimes_{i=1}^m \wh\bfU_{\tau_i}^0, \\
    \wt \bfU_\tau &= \mat_0(Z_\tau^U)^\top, \quad \text{with} &&Z_\tau^U = \ten_0(\wt \bfW_\tau^\top) \times_0 \bfI_{\tau} \bigtimes_{i=1}^m \wh \bfU_{\tau_i}^1.
\end{align*}
In total, we obtained a tree tensor network 
\begin{align*}
    \wh Y_{\bar \tau}^1 = \wh C_{\bar \tau}^1 \times_0 \bfI_{\bar \tau} \bigtimes_{i=1}^m \wh \bfU_{\bar{\tau}_i}^1,
\end{align*}
of tree ranks $(4r_\tau^0)_{\tau \leq \bar \tau}$.

\subsubsection{Truncation}
As in the Tucker case, we perform a standard truncation of the tree tensor network $\wh Y_{\bar \tau}^1$ with a user defined tolerance $\vartheta$. Since this is a standard procedure, we refer to \cite[Algorithm 7]{CLS23} for a detailed explanation. This results in a tree tensor network
\begin{align*}
    Y_{\bar \tau}^1 = C_{\bar \tau}^1 \times_0 \bfI_{\bar \tau} \bigtimes_{i=1}^m \bfU_{\bar{\tau}_i}^1,
\end{align*}
of tree ranks $(r_\tau^1)_{\tau \leq \bar \tau}$ with $r_\tau^1 \leq 4r_\tau^0$.

\subsection{Second-order augmented BUG for tree tensor networks}
The second-order augmented BUG integrator for Tucker tensors from Section~\ref{sec:2nd_order_augmented_BUG} can be extended to tree tensor networks in an analogous way, as done in \cite{CLS23}. We briefly describe the full integrator:

\begin{enumerate}
    \item \textbf{Pre-augment:} For all $\tau \leq \bar \tau$, construct the augmented basis $\wh \bfU_{\tau}^0$, the matrices $\bfQ_\tau^0$, $\bfS_\tau^{0}$, $\wh \bfQ_\tau^0$, $\wh \bfV_\tau^0$, $\bfV_\tau^0$ and the tensor $\wh C_{F_\tau}^0$ as described in the subsections before. 
    \item \textbf{Update of leaves:} Evolve and augment all leaves $\ell \in \mathcal{L}$ in the same way as for the second-order parallel BUG integrator, see Section~\ref{subsec:time_integration_2nd_parallel_TTN}.
    \item \textbf{Update of core tensors:} We update the core tensors recursively from the lowest level to the root. Let $\tau = (\tau_1,\dots,\tau_m)\leq \bar \tau$ and all subtrees $\tau_i$ are already updated and augmented, i.e., we have $\wh \bfU_{\tau_i}^1 \in \C^{n_{\tau_i} \times 4r_{\tau_i}^0}$. We then solve the $2r_\tau^0 \times 4r_{\tau_1}^0 \times \dots \times 4r_{\tau_m}^0$ tensor differential equation
    \begin{align*}
        \dot{\wh C_\tau}(t) &= \wh F_\tau( \wh C(t) \times_0 \bfI_{\tau} \bigtimes_{i=1}^m \wh \bfU_{\tau_i}^1) \bigtimes_{i=1}^m \wh \bfU_{\tau_i}^{1,*}, \\
        \wh C_\tau(t_0) &= C_\tau^0 \times_0 (
      \wh\bfQ_{\tau}^{0,*} \bfQ_{\tau}^0 \bfS_{\tau}^{0,\top}) \bigtimes_{i=1}^m \wh \bfU_{\tau_i}^{1,*} \bfU_{\tau_i}^0.
    \end{align*}
    Then compute an orthonormal basis $\wh \bfQ_\tau^1$ of rank $\leq 4r_\tau^0$ of the range of the matrix $[\mat_0 (\wh C_\tau(t_1))^\top, \mat_0 (\wh C_\tau(t_0))^\top]$. We set
    \begin{align*}
    \wh C_\tau^1 := \ten_0 \left( \wh \bfQ_\tau^{1,\top}\right) \in \C^{4r_\tau^0 \times 4r_{\tau_1}^0 \times\dots \times 4r_{\tau_m}^0}.
    \end{align*}
    \item \textbf{Truncation:} The resulted tree tensor network 
    \begin{align*}
        \wh Y_{\bar \tau}^1 = \wh C_{\bar \tau}^1 \times_0 \bfI_{\bar \tau} \bigtimes_{i=1}^m \wh \bfU_{\bar{\tau}_i}^1,
    \end{align*}
    of tree ranks $(4r_\tau^0)_{\tau \leq \bar \tau}$ needs to be truncated to tree ranks $(r_\tau^1)_{\tau \leq \bar \tau}$ with $r_\tau^1 \leq 4r_\tau^0$, see \cite[Algorithm 7]{CLS23}.
\end{enumerate}

\subsection{Robust error bound}
The robust error bound for the second-order parallel BUG for Tucker tensors and the robust error bound for the second-order augmented BUG for Tucker tensors extend to tree tensor networks in the same way as in \cite[Thm. 6.1]{CeLW20}. We therefore omit a detailed proof, but give the precise statement and its assumptions.

Due to the changing ranks, the tree tensor network manifold is different in every time step. Thus, for a tree $\tau=(\tau_1,\ldots,\tau_m)$ we use the notation $\M_\tau^k=\M_\tau((n_\tau)_{\tau\le\bar\tau},(r_\tau^k)_{\tau\le\bar\tau})$ for the TTN manifold in the $k$th time step. Further, we recall the corresponding tensor space $\mathcal{V}_\tau=\C^{r_\tau\times n_{\tau_1}\times \dots \times n_{\tau_m}}$. Further, we set $\mathcal{V} = \mathcal{V}_{\bar\tau}$ and $\mathcal{M}^k = \mathcal{M}_{\bar \tau}^k$ for the full tree. We make the following assumptions, cf. \cite{CLS23,CeLW20}:

\begin{enumerate}[label=A\arabic*, ref=A\arabic*]
	\item\label{ass:bounded_TTN} We assume $F:[0,t^*]\times \mathcal{V} \rightarrow \mathcal{V}$ to be bounded and Lipschitz continuous, i.e.,
    \begin{align*}
        \Vert F(Y) - F(\wt Y) \Vert &\leq L \Vert Y - \wt Y \Vert \quad &&\forall \ Y,\wt Y \in \mathcal{V}, \\
        \Vert F(Y) \Vert &\leq B \quad &&\forall \ Y \in \mathcal{V}, 
    \end{align*}
    This could be also weakened to a local Lipschitz condition and a local bound in the neighborhood of the exact solution $A(t)$.
	\item\label{ass:eps-normal_TTN} Given the projector $P$ onto the tangent space at $Y$, we assume that the function values $F(t,Y)$ are in the tangent space $\mathcal{T}_Y \M^k$ up to a small remainder: with $P^k_Y$ denoting the orthogonal projection onto $\mathcal{T}_{Y}\M^k$, we assume that for some $\eps>0$,
    \begin{align*}
        \| F(t,Y) - P^k_{Y} F(t,Y) \| &\le \eps
    \end{align*}
    for all $(t,Y) \in [t_k,t_{k+1}]\times \M^k$ in some ball $\| Y \| \le \rho$, where it is assumed that the exact solution $A(t)$, $0\le t \le t^*$, has a bound that is strictly smaller than $\rho$.
     
	\item\label{ass:init_TTN} The error at the initial condition is bounded by $\Vert Y^0 - A(0)\Vert \leq \delta$.
	\end{enumerate}

\begin{theorem}\label{thm:global_error_bound_TTN}
     Under the assumptions \ref{ass:bounded_TTN}-\ref{ass:init_TTN}, the error of the numerical approximation $Y_n$ at time $t_n = t_0 + nh$, obtained with $n$ time steps of the second-order parallel TTN integrator/second-order augmented TTN integrator with step size $h>0$ and rank-truncation tolerance $\vartheta$, is bounded by
     \begin{align}
         ||Y_n - A(t_n)|| \leq c_0 \delta + c_1 h^2 + c_2 h \varepsilon_r^{\mathrm{max}} + c_3 \varepsilon_{2r}^{\mathrm{max}} + c_4 n\vartheta,
     \end{align}
     where all appearing constants $c_0,\dots,c_4$ are independent of small singular values of matricizations of the connection tensors.
 \end{theorem}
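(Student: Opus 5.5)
The plan is to reduce the tree tensor network statement to the Tucker results of Theorem~\ref{thm:global_error_bound_Tucker} and Theorem~\ref{thm:global_error_bound_Tucker_2nd_augmented}. We use induction over the height of the tree, in the spirit of \cite[Thm.~6.1]{CeLW20}. As in the Tucker case, it suffices to prove a local error bound on $[t_0,t_1]$ with $A(t_0)=Y^0$ of the form
\begin{align*}
\|A(t_1)-\wh Y_{\bar\tau}^1\| \le c_1 h^3 + c_2 h^2\varepsilon_r^{\mathrm{max}} + c_3 h\varepsilon_{2r}^{\mathrm{max}} .
\end{align*}
Truncation then adds $c_4\vartheta$. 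The initial error $\delta$ is handled by Gronwall exactly as before, and the global bound follows from Lady Windermere's fan \cite{HairerNorsettWanner:ODE_BOOK1}.

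First, at the root $\bar\tau=(\tau_1,\dots,\tau_m)$, I view $Y_{\bar\tau}^0$ as a Tucker tensor with bases $\bfU_{\tau_i}^0$ and core $C_{\bar\tau}^0$. The pre-augmentation, the matrices $\bfQ_{\tau_i}^0,\wh\bfQ_{\tau_i}^0$ and $\wh\bfV_{\tau_i}^0$ coincide by construction with those of Section~\ref{subsec:2ndorder_parallel_Tucker_formulation}. The only caveat is that $\wh\bfU_{\tau_i}^0$ is no longer the exact orth of $[\bfU_{\tau_i}^0,\bfF_{\tau_i}(\bfY^0_{\tau_i})\bfV_{\tau_i}^0]$; it is only its recursive Tucker representation. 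The key step is therefore an inductive version of Lemma~\ref{le:Fapprox}: $\wh\bfU_\tau^0$ spans $\mat_0(\wh F_\tau(\wh X_\tau^0))^\top$ up to $O(\varepsilon)$ for every $\tau$. I would prove this leaves-to-root by the same telescoping over modes used in Lemma~\ref{le:Fapprox}, using at each node that $\mat_0(\wh C_\tau^0)^\top$ contains the projection of the flux onto $\bigotimes_i\wh\bfU_{\tau_i}^0$ and that the children already span the flux up to $\varepsilon$. With this, Lemmas~\ref{le:halfstepbasis} and \ref{le:approxK} carry over verbatim for each subtree problem $\dot\bfY_{\tau}=\wh\bfF_{\tau}(\bfY_\tau)$, whose reduced function inherits Lipschitz constant $L$ and bound $B$ because $\wh\pi^\dagger_{\tau,i}$ is an isometric left inverse.

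Next comes the recursion. If the $K$-step for $\tau_i$ is replaced by the exact evolution of the reduced problem $\wh F_{\tau_i}$, Lemma~\ref{lemma:At_bound} gives the basis approximation at the root. The actual algorithm approximates this $K$-step by recursively applying the same integrator to the subtree $\tau_i$, with initial value $\wh Y_{\tau_i}^0$ and the augmented reduced function. The induction hypothesis therefore yields a local error $O(h^3+h^2\varepsilon+h\varepsilon_{2r})$ for that subproblem, provided the normal-component assumption \ref{ass:eps-normal_TTN} transfers to the reduced functions. This transfer follows as in \cite{CeLW20}, since tangent projections at $\wh Y_{\tau_i}$ prolong to tangent-space components at the full network. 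A perturbation argument (Lipschitz continuity plus the fact that $\wh\bfU_{\tau_i}^1$ contains $\wh\bfU_{\tau_i}^0$ and spans the computed $K$-value) then shows that Lemma~\ref{lemma:At_bound} still holds with the recursively computed bases. Finally, the core estimate follows from Lemma~\ref{lemma:A_UU_Y} for the parallel variant and from the Gronwall argument in the proof of Theorem~\ref{thm:global_error_bound_Tucker_2nd_augmented} for the augmented variant, with the additional $0$-mode augmentation by $\wh\bfQ^1_\tau$ only enlarging spaces.

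The main obstacle I expect is the interplay between $0$-mode augmentation and the half-step approximation. Lemma~\ref{le:approxK} needs $\wh\bfV_{\tau_i}^0$ to span the co-range of $\bfA_{\tau_i}(t_{1/2})$. In the tree, this co-range passes through the parent's augmented environment $\wh\bfQ_\tau^0$, so one must check that $\wh\bfQ_\tau^0$, built from $\wh C^0_{F_\tau}$, carries the half-step flux information down the tree, and that errors do not compound with depth. I would first attempt a clean induction statement of the form ``$\wh\bfU_\tau^0\wh\bfU_\tau^{0,*}\mat_0(\wh X_\tau(t_{1/2}))^\top\wh\bfV_\tau^0\wh\bfV_\tau^{0,*}$ approximates the half step up to $O(h^2+h\varepsilon)$'', accepting constants that depend on the tree height but not on singular values.
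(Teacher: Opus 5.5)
Your overall route, induction over the tree height as in \cite[Thm.~6.1]{CeLW20} with every node reduced to the Tucker lemmas, is the one the paper invokes; its proof consists only of that reference. The gap is in identifying the parallel TTN algorithm with ``the same integrator applied recursively to the subtree'' and then saying the core estimate follows from Lemma~\ref{lemma:A_UU_Y}. In that lemma, the off-diagonal block $\wt C_i^1=\ten_i(\wt\bfU_i^*\bfK_i(t_1)\wh\bfQ_i^\top)$ is $O(h^3+h^2\bar\varepsilon_r)$-accurate because $\bfK_i(t_1)$ is the exact $K$-value. At an interior node $\tau<\bar\tau$, your perturbation argument would give the same accuracy only if $\wt C_\tau^1$ were built from the subtree output $\wh{\bar C}_\tau\times_0\bfI_\tau\bigtimes_i\wh\bfU_{\tau_i}^1$. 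The algorithm instead uses $Z_\tau^K=\bar C_\tau(t_1)\times_0\bfI_\tau\bigtimes_i\wh\bfU_{\tau_i}^0$, which drops the blocks $\wt C_{\tau_i}^1$. Those blocks have size $\Theta(h^2)$. Moreover, $\wt\bfU_\tau$ is constructed via $\wt\bfW_\tau$ precisely to contain these directions, so the dropped part of $\wt\bfU_\tau^*\bfK_\tau(t_1)$ is generically $\Theta(h^2)$, not $O(h^3)$. Your perturbation step controls only the basis (Lemma~\ref{lemma:At_bound}), not this coefficient block.

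Here is a concrete check. Take $\bar\tau=(\tau_1,3)$ with $\tau_1=(1,2)$, $F(Y)=Y\times_1\mathbf{H}$ (always tangent, so all $\varepsilon$'s vanish), and $Y^0=u_1\otimes u_2\otimes u_3$. Let $\Pi$ be the orthogonal projector onto $\mathrm{span}\{u_1,\mathbf{H}u_1\}$.
\begin{itemize}
\item $\wh\bfU_{\tau_1}^0$ spans $\mathrm{range}(\Pi)\otimes u_2$.
\item The leaf-$1$ step produces $\wt\bfU_1=z$ with $z$ parallel to $(\bfI-\Pi)\mathbf{H}^2u_1+O(h)$, leaves $2$ and $3$ produce no new directions, and hence $\wt\bfU_{\tau_1}=z\otimes u_2$.
\item Since $\bar C_{\tau_1}(t_1)$ lives in $\mathrm{range}(\Pi)\otimes u_2$, one gets $\wt C_{\tau_1}^1=0$ and $\wh Y^1=(e^{h\Pi\mathbf{H}\Pi}u_1)\otimes u_2\otimes u_3$.
\end{itemize}
The local error is therefore $\tfrac{h^2}{2}\|(\bfI-\Pi)\mathbf{H}^2u_1\|+O(h^3)$, whereas the Tucker integrator on the flattened problem is second order. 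So the parallel case cannot be closed as stated, and the paper's reference passes over the same point. Your induction does go through if $\bfK_\tau(t_1):=\mat_0(\wh{\bar C}_\tau\times_0\bfI_\tau\bigtimes_i\wh\bfU_{\tau_i}^1)^\top$ is used in the definition of $\wt C_\tau^1$.

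A smaller mismatch concerns the augmented TTN variant. There $\wh\bfQ_\tau^1$ spans only $\wh C_\tau(t_0)$ and $\wh C_\tau(t_1)$, so your claim that $\wh\bfU_\tau^1$ contains $\wh\bfU_\tau^0$ fails at interior nodes. The step $\bfP_i^\perp\wh\bfU_i^0=0$ in the proof of Lemma~\ref{lemma:At_bound} must then be replaced by a different argument.
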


We note that all favorable properties from Subsection~\ref{subsec:properties_2nd_augment} extend to the tree tensor network case. More precisely, the second-order augmented BUG integrator preserves norm and energy for Schrödinger equations and diminishes energy in gradient systems, up to the truncation tolerance. The proofs can follow again exactly the same arguments as in \cite[Sec. 6]{CLS23}.

\section{Numerical experiments}\label{sec:num_ex}
We verify the theoretical findings by applying both integrators to a problem from radiation transfer and a quantum spin system. All code is provided online \cite{code}.

\subsection{Uncertain radiative transfer}

To demonstrate the second-order convergence of the proposed integrator, we investigate the plane source benchmark with uncertain scattering coefficients. The plane source benchmark~\cite{ganapol2008analytical} is a standard test case in radiative transfer, which describes the time evolution of an isotropic particle beam in slab geometry. It is described by the radiative transfer equation
\begin{equation}
	\label{eq:rte}
	\begin{aligned}
		&\partial_t f + \mu \partial_x f + \sigma_s(\xi) f = \frac{\sigma_s(\xi) }{2} \int_{-1}^{1} f \,d\mu\,,
		\qquad 
		(x, \mu, \xi) \in [-2.5,2.5] \times [-1,1] \times \Xi, \\[1mm]
		& f(t = 0) = \frac{1}{\sqrt{2\pi}\cdot\sigma}\exp\Big(-\frac{x^2}{2\cdot \sigma^{2}} \Big)\,,
	\end{aligned}
\end{equation}
where the angular flux $f=f(t,x,\mu,\xi)$ describes the state of radiation particles. Here, $x\in [a,b]\subset\mathbb{R}$ is the spatial position, $\mu\in[-1,1]$ is the flight direction of radiation particles, and $\xi\sim U(\Xi)$ is a random variable uniformly distributed in $\Xi\subset\mathbb{R}$. The random variable models randomness in the isotropic scattering coefficient $\sigma_s$, that is not present in the original benchmark that commonly assumes $\sigma_s \equiv 1$. Here, we choose $\sigma_s(\xi) = 5 + 4\cdot \xi$ and $\Xi = [-1,1]$. Spatial derivatives are discretized using central differences combined with a Runge-Kutta 4 method in time. The random variable is discretized using a nodal method and we use a modal $P_N$ method to discretize the angular domain. We choose $100$ degrees of freedom in the space, time, and directional dimensions, leading to a total of $100^3$ unknowns. A full-rank reference solution is computed using $\Delta t = 10^{-3} \cdot \Delta x$, where $\Delta x = 0.05$ is the spatial cell length. We use time step sizes for the DLRA integrators of $h\in [0.1\cdot\Delta x, \Delta x]$. In Figure~\ref{fig:convergence_radiation} we can see the second-order convergence for the expected value, the variance and the full solution. As a comparison we further added the results of the first-order parallel BUG from \cite{ceruti2024parallel}.

\begin{figure}
    \centering
    \includegraphics[width=1.0\linewidth]{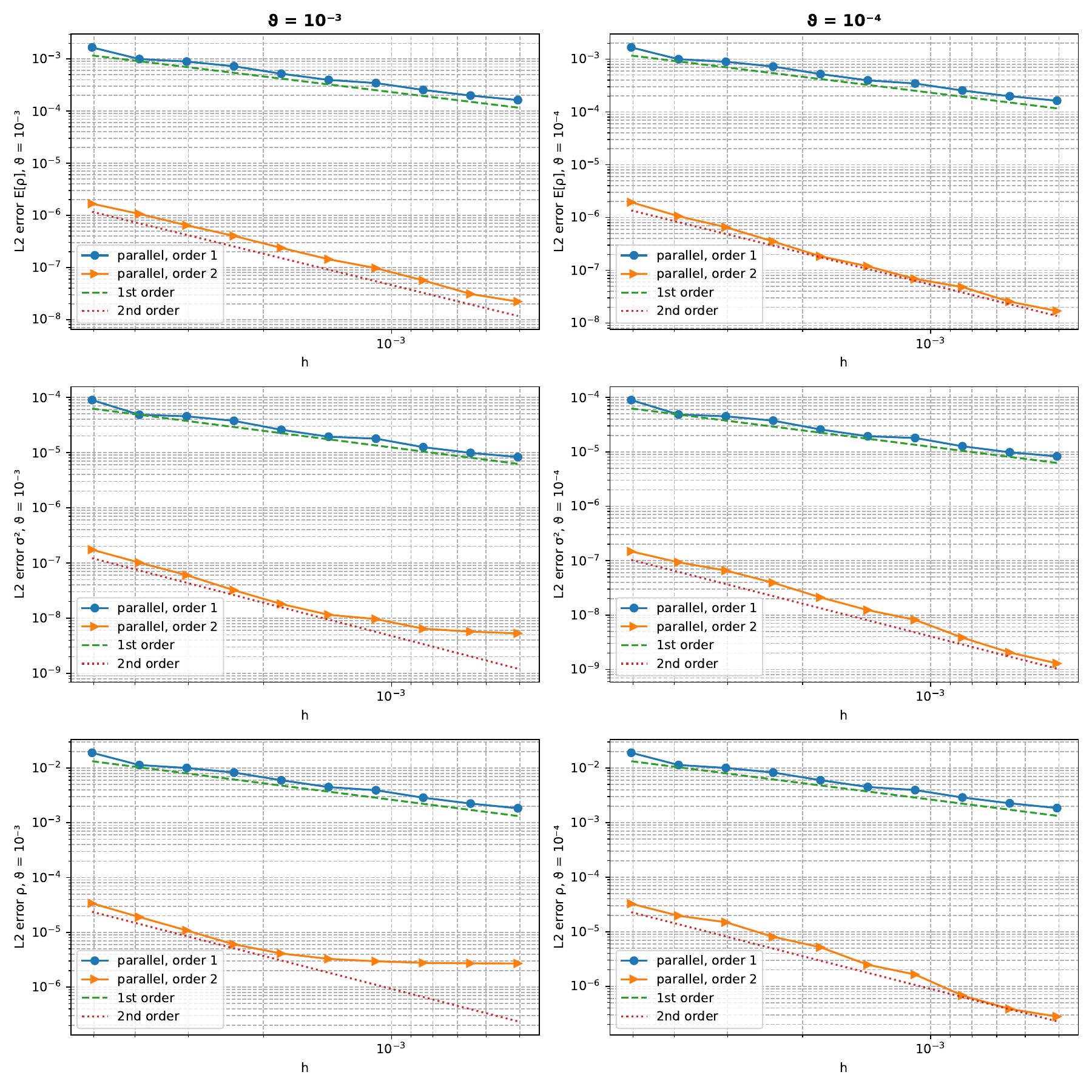}
    \caption{L$^2$ error of the numerical solution. Left column: Truncation tolerance of $\vartheta=10^{-3}$. Right column: Truncation tolerance of $\vartheta=10^{-4}$. Plotted are the errors of the expected value (top), the errors of the variance (center), and the error of the full solution (bottom).}
    \label{fig:convergence_radiation}
\end{figure}

\subsection{Schrödinger equation for quantum spin systems}\label{subsec:num_ex_QS_spin}
We illustrate the proposed integrators further for a problem in quantum physics. Consider the Ising model in a transverse field with next-neighbor interactions for $d$ spin-$\tfrac12$ particles, cf. \cite{stinchcombe1973ising}
\begin{align}
    i \partial_t \psi = H \psi, \quad \text{with} \quad H = -\Omega \sum_{k=1}^d \sigma_x^{(k)} - \sum_{k=1}^{d-1} \sigma_z^{(k)} \sigma_z^{(k+1)}. \label{eq:Schrödinger_spin}
\end{align}
Here $\sigma_x$ and $\sigma_z$ are the first and third Pauli matrices, respectively. Further, $\sigma^{(k)} = \bfI \otimes \dots \otimes \bfI \otimes \sigma \otimes \bfI \otimes \dots \otimes \bfI$ denotes the operator, where $\sigma$ acts only in the $kth$ site and $\Omega \geq 0$. For all the following simulations, we used the all spin up state as initial data, i.e., $\psi^0 = \bigotimes_{k=1}^d \begin{pmatrix}
    1 \\ 0
\end{pmatrix}$.

To verify the robust second-order error bound for the tree tensor network case, we apply both the parallel and the augmented second-order BUG integrator using a balanced binary tree tensor network. Each integrator is compared against the exact solution obtained from a direct computation of the matrix exponential $\exp(-itH)\psi^0$, which is still feasible for small system sizes. The displayed errors are measured in the Frobenius norm of the full tensor at the final time. The truncation tolerances are coupled to the step size as $ \vartheta_i = 0.1\,h_i^3$: since the rank-augmenting directions carry a weight of order $h^2$, the choice $\vartheta_i=\mathcal{O}(h_i^3)$  keeps the per-step truncation error below the local discretization error and thus preserves second-order. In Figure~\ref{fig:error_plot_spin}, we can observe the second-order convergence of the second-order parallel BUG and the second-order augmented BUG in the tree tensor network case. As expected, the error constant of the second-order augmented BUG integrator appears to be smaller compared to the second-order parallel integrator. Note that the second-order augmented BUG integrator shows higher order convergence for larger step-sizes, as due to the small system size, the order of the underlying classical Runge-Kutta method is more visible. For the smallest step-sizes both integrators saturate at the projection error $\eps$ as we set a maximal rank of $10$.

\begin{figure}
    \centering
    \includegraphics[width=1.0\linewidth]{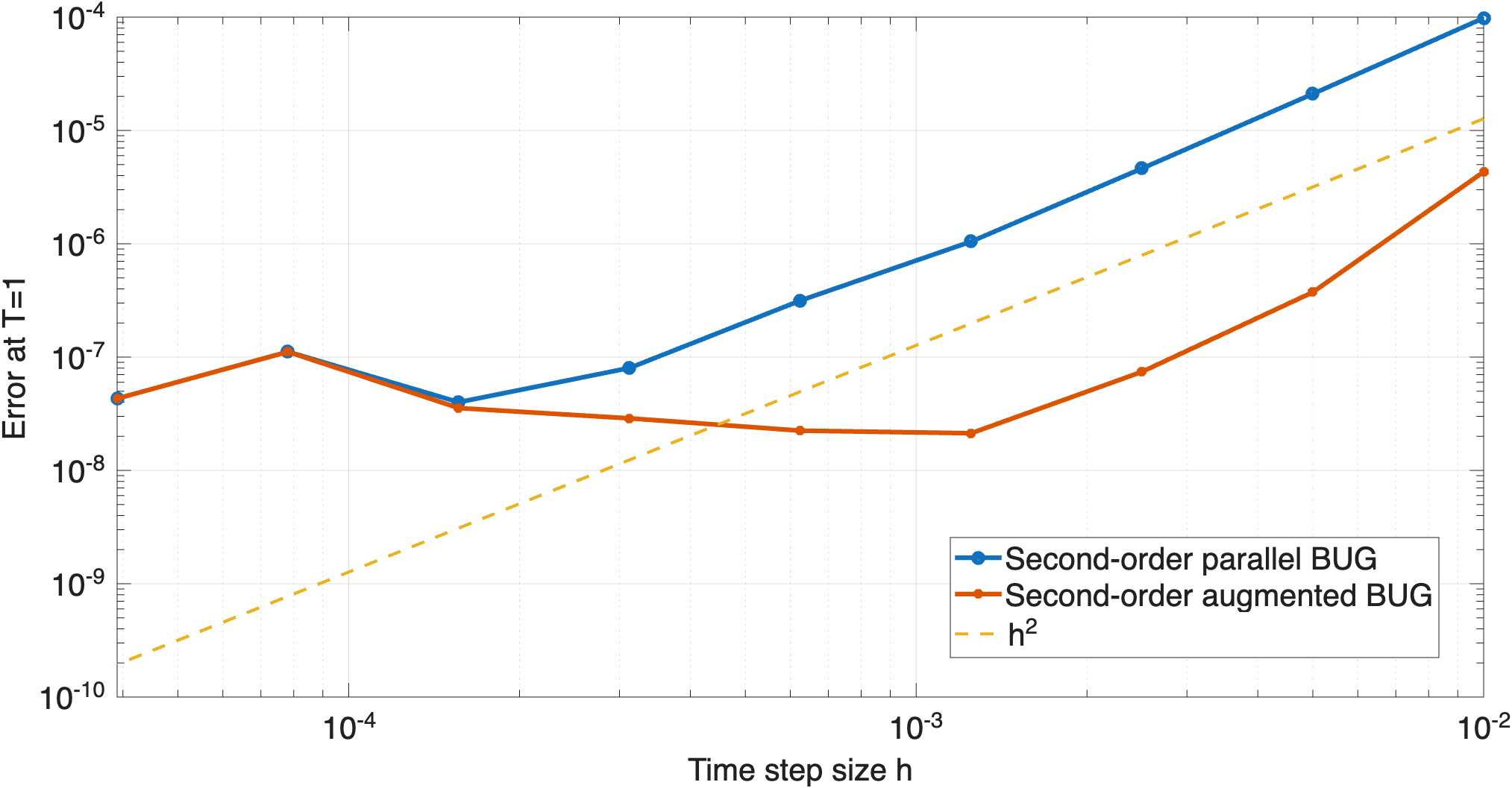}
    \caption{Errors at time $T=1$ of the second-order parallel BUG and second-order augmented BUG using a balanced binary tree tensor network. Unspecified parameters are $d=8$, $\Omega = 1$, and $r_{\text{max}} = 10$.}
    \label{fig:error_plot_spin}
\end{figure}

We further verify the structure-preserving properties established in Section~\ref{subsec:properties_2nd_augment}. Both integrators are applied to the Schrödinger equation~\eqref{eq:Schrödinger_spin}, and the norm and the energy are recorded at every time step. To isolate the conservation properties from the effect of rank truncation, the truncation tolerance is set to $\vartheta=10^{-14}$.  Figure~\ref{fig:norm_energy_spin_system} shows
that the augmented BUG integrator preserves norm and energy. The second-order parallel BUG integrator does not share this property by construction, since the augmented connecting tensor is assembled from approximated blocks. In this experiment, the resulting deviation is small; the norm is preserved up to $10^{-11}$ and the energy up to $10^{-7}$.

\begin{figure}
    \centering
    \includegraphics[width=1.0\linewidth]{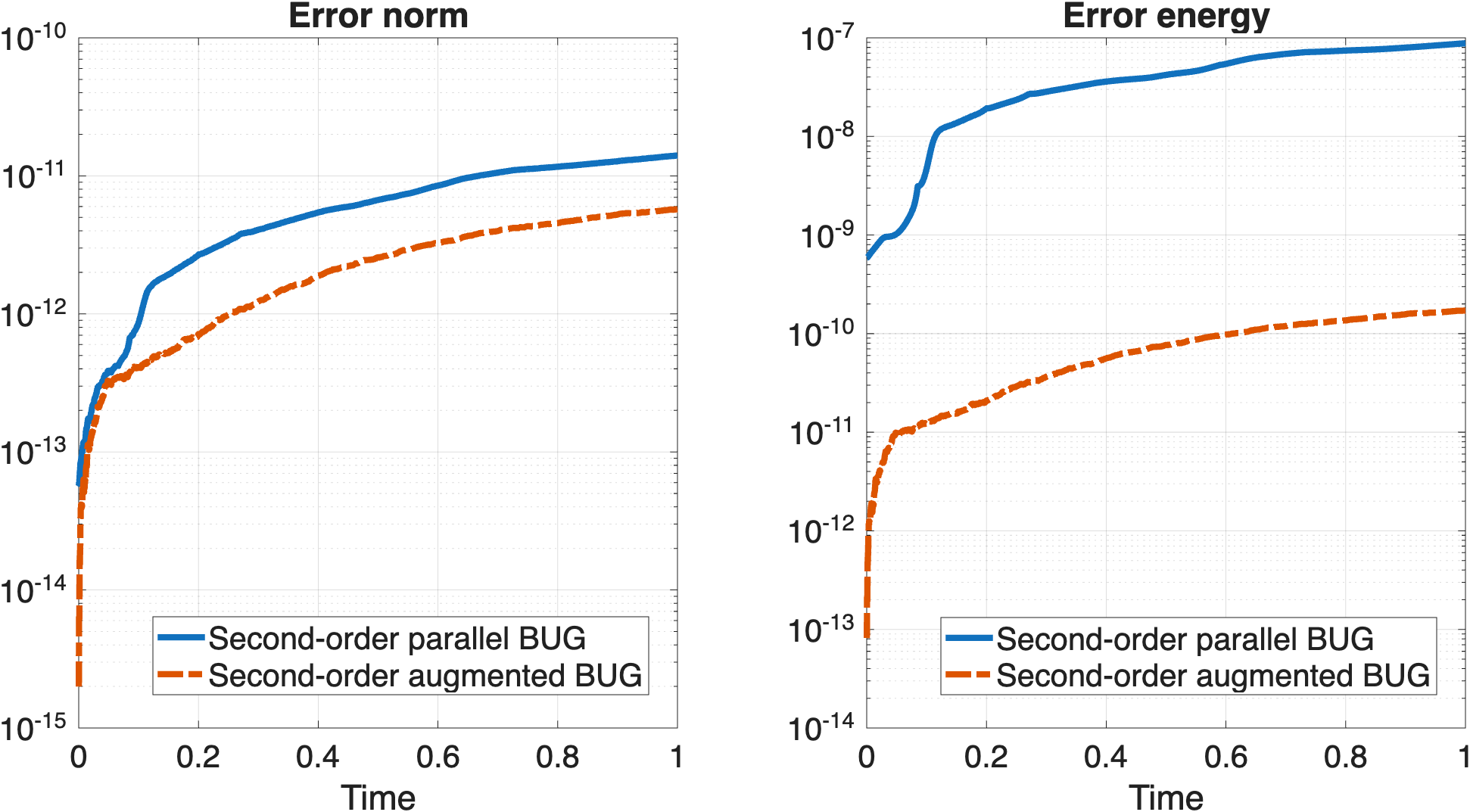}
    \caption{Norm and energy preservation for second-order parallel BUG and second-order augmented BUG. Left: Error of norm over time. Right: Error of energy over time. Unspecified parameters are $d=16$, $\Omega = 1$, $\vartheta = 10^{-14}$, $\tau = 0.00025$ and $r_{\text{max}} = 50$.}
    \label{fig:norm_energy_spin_system}
\end{figure}

% Possible examples for experiments:
% \begin{itemize}
%      \item Wärmeleitungsgleichung mit uncertainties (ähnlich wie midpoint BUG paper). Das könnte man mit Tucker machen. 3D Wärmeleitung + UQ
%      \item Ein TT/MPS Beispiel? 
%      \item Ein Quantum Beispiel mit Trees?
% \end{itemize} 

% \bibliographystyle{abbrv}
% \bibliographystyle{sn-mathphys-num}

\section*{Acknowledgments}
The work of Dominik Sulz was funded by the Deutsche Forschungsgemeinschaft (DFG, German Research Foundation) – TRR 352 – Project-ID 470903074.

\bibliography{main} 

% \section*{Appendix}

\end{document}